\def\softd{{\leavevmode\setbox1=\hbox{d}%
		\hbox to 1.05\wd1{d\kern-0.4ex{\char039}\hss}}}
\author{A. Brunk$^*$, H. Egger$^\dagger$, O. Habrich$^\dagger$, and M. Luk\' a\v cov\' a-Medvi\v{d}ov\'a$^*$}
\title[Relative energy estimates for the Cahn-Hilliard equation]{Relative energy estimates for the Cahn-Hilliard equation with concentration dependent mobility}
\def\div{\operatorname{div}}
\def\NN{\mathbb{N}}
\def\RR{\mathbb{R}}
\def\WW{\mathbb{W}}
\def\QQ{\mathbb{Q}}
\def\Th{\mathcal{T}_h}
\def\Itau{\mathcal{I}_\tau}
\def\I{I}
\def\la{\langle}
\def\ra{\rangle}
\DeclarePairedDelimiter{\norm}{\|}{\|}
\DeclarePairedDelimiter{\snorm}{|}{|}
\newcommand{\na}{\nabla}
\def\E{\mathcal{E}}
\def\D{\mathcal{D}}
\newtheorem{lemma}{Lemma}
\newtheorem{problem}[lemma]{Problem}
\newtheorem{theorem}[lemma]{Theorem}
\theoremstyle{definition}
\newtheorem{remark}[lemma]{Remark}
\def\dt{\partial_t}
\def\dtt{\partial_{tt}}
\def\ddt{\frac{d}{dt}}
\def\bmu{\bar{\mu}}
\def\hbmu{\hat{\bar{\mu}}}
\def\Vh{\mathcal{V}_h}
\begin{document}

\maketitle

	\centerline{$^*$ Institute of Mathematics, Johannes Gutenberg-University Mainz}
	
	\centerline{Staudingerweg 9, 55128 Mainz, Germany}
	
	\centerline{abrunk@uni-mainz.de, \quad lukacova@uni-mainz.de}

	\bigskip
	
	\centerline{$^\dagger$ Department of Mathematics, TU Darmstadt}
	
	\centerline{Dolivostraße 15, 64293 Darmstadt, Germany}

	\centerline{egger@mathematik.tu-darmstadt.de, \quad habrich@mathematik.tu-darmstadt.de}

\begin{abstract}
Based on relative energy estimates, we study the stability of solutions to the Cahn-Hilliard equation with concentration dependent mobility with respect to perturbations.
As a by-product of our analysis, we obtain a weak-strong uniqueness principle on the continuous level under realistic regularity assumptions on strong solutions.
We then show that the stability estimates  can be further inherited almost verbatim by appropriate Galerkin approximations in space and time. This allows us to derive sharp bounds for the discretization error in terms of certain projection errors and to establish order-optimal a-priori error estimates for semi- and fully discrete approximation schemes.
\end{abstract}

\section{Motivation}

The Cahn-Hilliard equation is one of the main mathematical models for describing phase separation phenomena, e.g., in binary alloys \cite{Cahn61,CahnHilliard58} or spinodal decomposition of binary fluids \cite{Boyer13}.
We study a system with concentration dependent mobility, given by 
\begin{alignat}{5} 
\dt \phi &= \div(b(\phi) \nabla \mu) \qquad && \text{in } \Omega, \ t>0, \label{eq:ch1}\\
\mu &= -\gamma \Delta \phi + f'(\phi) \qquad && \text{in } \Omega, \ t>0, \label{eq:ch2}
\end{alignat}
and complemented by appropriate initial and boundary conditions. 
As usual $\phi$ denotes the phase fraction, $\mu$ the chemical potential, $b(\phi)$ the concentration dependent mobility, $\gamma>0$ a parameter related to the interface thickness, and $f(\phi)$ is a double well potential whose minima characterize the two phases. 
The second equation defines the chemical potential $\mu = \delta_\phi \E(\phi)$ as the variational derivative of an associate energy 
\begin{align} \label{eq:energy}
\E(\phi) = \int_\Omega \frac{\gamma}{2} |\nabla  \phi|^2 + f(\phi),
\end{align}
which together with \eqref{eq:ch1} induces a gradient flow structure of the problem and decay of the energy $\E(\phi)$ along weak solutions. 
This implies thermodynamic consistency of the model and allows to establish existence of weak solutions by Galerkin approximation, energy estimates, and compactness arguments.

In \cite{ElliottFrench86}, existence and regularity of weak solutions for the Cahn-Hilliard equation has been established for constant mobility and polynomial potential. Logarithmic potentials and concentration dependent mobilities were treated in \cite{BarrettBloweyGarcke99,CopettiElliott92}. We refer to \cite{BarrettBlowey97,BarrettBloweyGarcke01,Feng06} for results concerning the extension to multi-component systems and multiphysical problems. See also our recent works where logarithmic potentials and concentration dependent mobility functions have been used in the context of a complex model for viscoelastic phase separation 
\cite{brunk, lukacova, strasser}.
Let us note that approximations in space and/or time and energy estimates are typically used to establish existence of solutions in rather general cases. 

Finite element approximations of the fourth-order system resulting after elimination of the chemical potential were analyzed in \cite{ElliottFrench86}. 
A mixed finite element approximation for constant mobilities treating $\phi$ and $\mu$ as separate variables was proposed in \cite{ElliottFrenchMilner89} and further analyzed in \cite{DuNicolaides91,ElliottLarsson92}. For extensions to logarithmic potentials and degenerate mobilities, we again refer to \cite{CopettiElliott92,BarrettBloweyGarcke99}. 
In \cite{FengProhl04,FengProhl05}, the analysis of finite element approximations has been extended to study the thin-interface limit $\gamma \to 0$.  
Apart from finite element methods, alternative discretization schemes, like discontinuous Galerkin methods \cite{KayStylesSuli09,LiuFrankRiviere19,XiaXuShu07} 
and Fourier-spectral approximations \cite{LiQiao17} have been investigated as well.
Extensive research has further been devoted to developing stable second order approximations in time; see \cite{TierraGuillenGonzales15} for an extensive overview and comparison of different approaches.
In a recent paper \cite{DiegelWangWise16}, which is probably closest to our investigations, an unconditionally well-posed fully discrete two-step approximation was proposed and a full convergence rate analysis was presented yielding order optimal error estimates. 

Let us note that quantitative convergence results in the papers cited above were derived mainly for the case of constant mobility, which allows to apply arguments of linear theory and to cover the terms stemming from the nonlinearity of the chemical potential $f$ by perturbation arguments. 
In this paper, we consider problems with \emph{concentration dependent mobility} and we utilize \emph{relative energy estimates}, i.e., genuinely nonlinear arguments, to conduct a quantitative error analysis. 
For ease of presentation, we will focus on conforming finite element approximations of \emph{second order in space and time}, but our arguments, in principle, apply also to higher order approximations and inexact Galerkin approximations. 

Related entropy and relative entropy arguments have been utilized intensively for the analysis of nonlinear evolution problems and, more recently, also for the convergence analysis of corresponding discretization methods. We refer to \cite{Juengel16} for an introduction and some selected results in this direction, as well as to \cite{doi:10.1137/16M1094233,gallouet:hal-01108579} for convergence and asymptotic analysis for fluid flow problems via relative energy estimates.
%

The first basic result of our paper is a formal relative entropy estimate which allows to deduce quantitative perturbation bounds for sufficiently regular solutions of \eqref{eq:ch1}--\eqref{eq:ch2}. As a by-product of our analysis, we also obtain a weak-strong uniqueness principle and thus a rather general proof of uniqueness.
Due to the variational character, these stability estimates are inherited almost verbatim by Galerkin approximations in space and Petrov-Galerkin approximation in time, which is our basic approach towards a systematic error analysis.
The structure of the relative energy estimates further provides guidelines for the choice of appropriate projection operators required in the error analysis. The discrete relative energy estimates then allow to estimate the discretization error by more or less standard projection error estimates, which finally leads to optimal convergence rates under minimal and less restrictive smoothness requirements than in previous works. This nonlinear convergence rate analysis can be seen as the main contribution of our manuscript.
%

In the current paper, we study problems with \emph{non-degenerate} concentration dependent mobility $b(\phi)$ and \emph{polynomially bounded} potential $f(\phi)$. These assumptions are used, for instance, to relate the relative entropy with the norm difference of solutions. 
\smallskip 

The remainder of the paper is organized as follows: 
In Section~\ref{sec:2}, we introduce our notation and basic assumptions and recall some results about existence and regularity of solutions. 
In Section~\ref{sec:3}, we introduce the relative entropy functional and present a formal relative entropy estimate which serves as the basis for the following considerations. Furthermore, we will also deduce the weak-strong uniqueness principle in course of the analysis.
In Section~\ref{sec:4}, we study the semi-discretization in space by a mixed finite element method. We will see that the relative energy estimate translates almost verbatim to the semi-discrete setting. This allows us to estimate the difference between the semi-discrete solution and a particular projection of the continuous solution by projection errors and to derive order optimal error convergence rates.
In Section~\ref{sec:5}, we then consider the time discretization by a Petrov-Galerkin approximation, 
which allows us to extend our arguments almost verbatim to the fully discrete setting. 
For illustration of our theoretical results, we present some preliminary numerical results in Section~\ref{sec:6}.
In the appendix we recall a version of the continuous and discrete Gronwall Lemma,  moreover we present the limiting process for the stability estimate from Section \ref{sec:3} and the higher regularity result for the weak solution are verified.

\section{Notation and preliminary results} \label{sec:2}

Let $L^p(\Omega)$, $W^{k,p}(\Omega)$ denote the usual Lebesgue and Sobolev spaces and $\norm{\cdot}_{0,p}$, $\norm{\cdot}_{k,p}$ the corresponding norms. In the Hilbert space case $p=2$, we write $H^k(\Omega)=W^{k,2}(\Omega)$ and abbreviate $\norm{\cdot}_k = \norm{\cdot}_{k,2}$. 
For ease of presentation, we will consider a periodic setting in the rest of the paper, and assume that $\Omega \subset \RR^d$ is a hyper cube in dimension $d=2,3$. 
We then write $H^s_p(\Omega)$, $s \ge 0$, for the space of functions in $H^s(\Omega)$ that can be extended periodically under preservation of class. 
The corresponding dual spaces are denoted by $H^{-s}_p(\Omega)=H^s_p(\Omega)'$. 
Note that for $s=0$, we have $H^s_p(\Omega) = H^{-s}_p(\Omega)=L^2_p(\Omega)$, where we identified $L^2(\Omega)$ with its dual space. 
The norm of the dual spaces are given by 
\begin{align} \label{eq:dualnorm}
    \norm{r}_{-s} = \sup_{v \in H^s_p(\Omega)} \frac{\la r, v\ra}{\|v\|_{s}},
\end{align}
where $\langle \cdot, \cdot\rangle$ denotes the duality product on $H^{-s}_p(\Omega) \times H^s_p(\Omega)$ for any $s \ge 0$.  
Note that for functions $u,v \in H^0_p(\Omega) = L^2_p(\Omega)$, we simply have $\la u,v \ra = \int_\Omega u v \, dx$, i.e., for sufficiently regular functions, the duality product can be identified with the scalar product of $L^2(\Omega)$.
If the meaning is clear from the context, we will sometimes omit the symbol $\Omega$ and briefly write $L^p$ for $L^p(\Omega)$, an so on.
We further denote by $L^p(a,b;X)$, $W^{k,p}(a,b;X)$,  and
$H^k(a,b;X)$ the Bochner spaces of correspondingly integrable or differentiable functions on the time interval $(a,b)$ with values in some Banach space $X$. If $(a,b)=(0,T)$, we will omit reference to the time interval and briefly write $L^p(X)$, for instance.

By a periodic weak solution of \eqref{eq:ch1}--\eqref{eq:ch2} on the interval $(0,T)$, we mean a pair of functions
\begin{align}
\phi &\in L^2(0,T;H^3_p(\Omega)) \cap H^1(0,T;H^1_p(\Omega)') =: \WW(0,T) \label{eq:weak_reg_1}\\
\mu &\in L^2(0,T;H^1_p(\Omega)) =: \QQ(0,T) \label{eq:weak_reg_2}
\end{align}
satisfying 
the variational identities
\begin{align}
 \la \dt \phi(t), v\ra + \la b(\phi(t)) \nabla \mu(t), \nabla v\ra &= 0, \label{eq:weak1}
\\
 \la \mu(t), w\ra - \la \gamma \nabla \phi(t), \nabla w\ra - \la f'(\phi(t)),w\ra &= 0,\label{eq:weak2}
\end{align} 
for all test functions $v,w \in H_p^1(\Omega)$ and a.a. $0<t<T$. 
It is not difficult to see that these two identities characterize all sufficiently regular periodic solutions of \eqref{eq:ch1}--\eqref{eq:ch2}.

In the rest of the paper, we make the following assumptions on the model parameters.
\begin{itemize}
\item[(A1)] $\gamma>0$ is a positive constant;
\item[(A2)] $b: \RR \to \RR_+$ satisfies $b\in C^2(\mathbb{R})$ with $0< b_1\leq b(s) \leq b_2$, $\norm{b'}_\infty\leq b_3$, $\norm{b''}_\infty\leq b_4$;
\item[(A3)] $f\in C^4(\mathbb{R})$ such that $f(s),f''(s)\geq -f_1$, for $f_1\geq0$. Furthermore, we assume 
that $f$ and its derivatives are bounded by $|f^{(k)}(s)| \le f_2^{(k)} + f_3^{(k)} |s|^{4-k}$ for $0 \le k \le 4$.
\end{itemize}
The growth bounds for $f$ immediately imply that $f(\phi)\in L^1(\Omega)$ for every $\phi\in H^1(\Omega)$. Functions $\phi \in H^1_p(\Omega)$ therefore have bounded energy $\E(\phi) < \infty$.

Under these assumptions, the existence of periodic weak solutions can be deduced from classical results. For later reference, we make a corresponding statement. 
\begin{lemma} \label{lem:weak}
Let (A1)--(A3) hold. Then for any $\phi_0 \in H^1_p(\Omega)$, 
there exists at least one periodic weak solution $(\phi,\mu)$ of problem \eqref{eq:ch1}--\eqref{eq:ch2} with initial value $\phi(0)=\phi_0$,  
and any periodic weak weak solution $(\phi,\mu)$ satisfies
\begin{align*}
\int_\Omega \phi(t) dx = \int_\Omega \phi_0 \, dx \qquad \text{and} \qquad
\E(\phi(t)) \le \E(\phi_0) - \int_0^t \D_{\phi(s)}(\mu(s)) \, ds, 
\end{align*}
for a.a. $0 \le t \le T$ with $\D_\phi(\mu)= \|b^{1/2}(\phi)\na\mu\|_0^2$ denoting the \emph{dissipation functional}.\\
If $\phi_0 \in H_p^k(\Omega)$, $1 \le k \le 3$, and $T$ sufficiently small for $d=3$ and $k > 1$, we further have
\begin{align*}
    \|\phi\|_{L^\infty(H_p^k)} + \|\phi\|_{L^2(H_p^{k+2})} + \|\dt \phi\|_{L^2(H_p^{k-2})} + \|\mu\|_{L^2(H^k_p)} + \|\mu\|_{L^\infty(H_p^{k-2})} \le C_{T}(\|\phi_0\|_N) 
\end{align*}
with constant $C_{T}(\|\phi_0\|_N)$ depending on the bounds for the coefficients and the domain.  
\end{lemma}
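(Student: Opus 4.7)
The plan is to defer the \emph{existence} part of the lemma to classical results (e.g.\ \cite{ElliottFrench86,ElliottLarsson92,BarrettBloweyGarcke99}) and outline how the remaining identities and bounds are obtained from a Galerkin construction, because only the latter two pieces are used in the subsequent nonlinear analysis of the paper. I would introduce a Galerkin approximation $(\phi_n,\mu_n)$ in the eigenbasis of the periodic Laplacian. Testing the discrete analogue of \eqref{eq:weak1} with $v=\mu_n$ and the time derivative of \eqref{eq:weak2} with $w=\dt\phi_n$ yields the fundamental identity
\begin{align*}
\ddt \E(\phi_n) + \D_{\phi_n}(\mu_n) = 0,
\end{align*}
from which the energy--dissipation inequality of the lemma follows by integration and passage to the limit using weak lower semicontinuity. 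Using (A2), the dissipation bound controls $\nabla\mu_n$ in $L^2(L^2)$, while testing \eqref{eq:weak2} with $w=1$ controls the mean of $\mu_n$ via the polynomial growth of $f'$, giving a uniform $L^2(H^1_p)$ bound for $\mu_n$. Standard Aubin--Lions compactness handles the nonlinearities $b(\phi_n)\nabla\mu_n$ and $f'(\phi_n)$ in the limit. Mass conservation is obtained separately by testing \eqref{eq:weak1} with $v\equiv 1$, whose $\la b(\phi)\nabla\mu,\nabla 1\ra$ term vanishes.

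For the \emph{higher regularity} estimates, the strategy is a bootstrap using elliptic regularity applied to the identity $-\gamma\Delta\phi=\mu-f'(\phi)$ and testing with successive powers of the Laplacian. For $k=1$, the energy bound already delivers $\phi\in L^\infty(H^1_p)$; combined with the Sobolev embedding $H^1\hookrightarrow L^6$ and the growth assumption on $f'$ in (A3), one obtains $f'(\phi)\in L^\infty(L^2)$, hence $\phi\in L^\infty(H^2_p)\cap L^2(H^3_p)$ by elliptic regularity once $\mu\in L^2(H^1_p)$ is established. The estimates for $\dt\phi\in L^2(H^{-1}_p)$ and $\mu\in L^\infty(H^{-1}_p)$ are then immediate from \eqref{eq:weak1}--\eqref{eq:weak2}. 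For $k=2,3$, one differentiates the equations, tests \eqref{eq:ch1} with $\Delta^{k-1}\mu$ (and \eqref{eq:ch2} with $\Delta^k\phi$), and invokes Gagliardo--Nirenberg and Sobolev interpolation to estimate products of derivatives of $b(\phi)$, $f'(\phi)$ with derivatives of $\phi$ and $\mu$ by the dissipation and a superlinear functional of the higher Sobolev norm.

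The \emph{main obstacle} is precisely the 3D critical case for $k>1$. There the cubic growth of $f'$ together with the critical embedding $H^1\hookrightarrow L^6$ makes the nonlinear remainder in the higher-order tests not absorbable by the dissipation for arbitrary times: the differential inequality for, say, $\|\phi\|_{H_p^k}^2$ is of the form $\ddt y+\alpha z\le C\,\Phi(y)\,z + \ldots$ with $\Phi$ a superlinear function of $y$, and a standard nonlinear Gronwall argument then only yields a solution on a time interval $(0,T^*)$ depending on the size of $\|\phi_0\|_k$. In 2D, the stronger embedding $H^1\hookrightarrow L^p$ for all $p<\infty$ together with Ladyzhenskaya-type inequalities removes the criticality and the bounds are global in time. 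I would therefore split the proof of the regularity estimate into the subcritical situation (2D or $k=1$) handled by a direct Gronwall argument and the critical 3D situation $k\in\{2,3\}$ handled by a local-in-time fixed-point or continuation argument, as carried out in the references cited above; the details, being classical, are relegated to the appendix alluded to in the introduction.
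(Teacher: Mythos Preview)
Your proposal is essentially correct and follows the same overall strategy as the paper: existence is deferred to the literature, mass conservation and energy dissipation follow by testing \eqref{eq:weak1}--\eqref{eq:weak2} with $(1,0)$ and $(\mu,\dt\phi)$, and higher regularity is obtained by testing the Galerkin system with higher powers of the Laplacian together with a Gronwall argument, with the 3D small-time restriction arising from the criticality of the nonlinear terms.

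Two minor discrepancies are worth noting. First, for the $k=3$ step the paper tests only the first equation with $v_N=-\Delta^3\phi_N$ and then uses the second equation to rewrite $\nabla\Delta\mu_N$ in terms of $\phi_N$, rather than your scheme of testing \eqref{eq:ch1} with $\Delta^{k-1}\mu$ and \eqref{eq:ch2} with $\Delta^k\phi$; both routes are standard and lead to the same differential inequality for $\|\nabla\Delta\phi_N\|_0^2$. Second, the paper localizes the small-time restriction in 3D at the $k=2$ step (quoting \cite{Boyer13}), after which the $k=3$ estimate reduces to a \emph{linear} Gronwall inequality with coefficients $g,h\in L^1(0,T)$ controlled by the $k=2$ bounds; your description via a nonlinear Gronwall argument directly at level $k$ is an equally valid way to see the obstruction. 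Also, your intermediate claim $\phi\in L^\infty(H^2_p)$ in the $k=1$ discussion slightly overshoots what the elliptic bootstrap actually gives there (you only get $\phi\in L^2(H^2_p)$ as the intermediate step toward $L^2(H^3_p)$, since $\mu$ is merely $L^2(H^1_p)$), but this does not affect the argument.
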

\begin{proof}
Existence of weak solutions and the a-priori bounds for $k=1$ and any $T>0$ in dimension $d=2,3$ follow from standard arguments; see  \cite{BarrettBlowey99,BarrettBloweyGarcke99b} for similar results under even more general assumptions on the problem data.
Conservation of mass and dissipation of energy follow immediately from the variational identities \eqref{eq:weak1}--\eqref{eq:weak2} by formally testing with $(v,w)=(1,0)$ and $(v,w)=(\mu,\dt \phi)$, respectively.
Improved regularity and the bounds for the solution for $k > 1$, which require a restriction on the maximal time $T$ in dimension $d=3$, can be obtained by a boot-strap argument and regularity results for the Poisson problem; details are given in the appendix. 
\end{proof}
\begin{remark}
From the estimates of Lemma~\ref{lem:weak} and the embedding theorem for Bochner spaces, see e.g. \cite[Ch.~25]{Wloka}, one can see that weak solutions $\phi$ and $\mu$ are continuous in appropriate function spaces. For regular initial values $\phi_0 \in H^3_p(\Omega)$, for instance, one has
\begin{align} \label{eq:continuous}
(\phi,\mu) \in C([0,T];H_p^3(\Omega) \times H^1_p(\Omega)),    
\end{align}
and hence $\phi$ is uniformly bounded on $\Omega \times (0,T)$. This will be used in  Section~\ref{sec:4} below.
\end{remark}

\section{A stability estimate and uniqueness}\label{sec:3}

As a first step of our analysis, we study the stability of periodic weak solutions $(\phi,\mu)$ of the system \eqref{eq:ch1}--\eqref{eq:ch2} with respect to perturbations.
Let $(\hat \phi,\hat \mu)$ be a pair of sufficiently regular functions.
Then the variational identities
\begin{align}
 \la \dt \hat \phi(t), v \ra + \la b(\phi(t)) \nabla \hat \mu(t), \nabla v \ra &=: \la \hat r_1(t),v \ra, \label{eq:weak1p}
\\
 \la \hat \mu(t), w\ra - \la \gamma \nabla \hat \phi(t), \nabla w\ra - \la f'(\hat \phi(t)),w\ra &=: \la \hat r_2(t),w\ra, \label{eq:weak2p}
\end{align}
for all $v,w \in H^1_p(\Omega)$ and a.a. $0 < t < T$, define two residual functionals $\hat r_1(t),\hat r_2(t)$.
Alternatively, the functions $(\hat \phi, \hat \mu)$ can be understood as solutions of the perturbed variational problem \eqref{eq:weak1p}--\eqref{eq:weak2p} for given right hand side $\hat r_1,\hat r_2$.
\begin{remark}
Let us emphasize that a term $b(\phi)$ appears in \eqref{eq:weak1p} which explicitly depends on the solution $\phi$ of \eqref{eq:ch1}--\eqref{eq:ch2}. Equation \eqref{eq:weak1p} therefore includes some sort of linearization around $\phi$ which greatly simplifies the proofs of our further results.  
\end{remark}

\subsection{Stability via relative energy}

In order to measure the difference between a given solution $(\phi,\mu)$ of \eqref{eq:weak1}--\eqref{eq:weak2} and solution $(\hat \phi,\hat \mu)$ of the perturbed problem \eqref{eq:weak1p}--\eqref{eq:weak2p}, we will utilize a regularized \emph{relative energy} functional
\begin{align} \label{eq:relenergy}
\E_\alpha(\phi|\hat \phi) 
:= \E(\phi) - \E(\hat \phi) - \la \E'(\hat \phi), \phi - \hat \phi\ra  + \tfrac{\alpha}{2} \|\phi - \hat \phi\|^2,
\end{align}
for some $\alpha>0$ chosen such that the regularized energy functional $\E_\alpha(\phi|\hat\phi)=\E(\phi) + \frac{\alpha}{2} \|\phi\|_0^2$ becomes strictly convex.
This can be achieved, e.g., by choosing
\begin{itemize}
    \item[(A4)] $\alpha = \max\{\gamma,\gamma+f_1\}$, 
\end{itemize}
where $-f_1$ is the constant in the lower bound for $f''$ from assumption (A3). 
The relative energy functional $\E_\alpha(\phi|\hat \phi)$ then is the associated Bregman distance \cite{Bregman67}. 
Moreover, the norm distance of two functions can be bounded by the relative energy.
\begin{lemma} \label{lem:equiv}
Let (A1)--(A4) hold. Then
\begin{align} \label{eq:lower_bound_rel}
\frac{\gamma}{2} \|\phi - \hat \phi\|_{1}^2  \le \E_\alpha(\phi|\hat \phi) \le \Gamma (1+\|\phi\|_1^2+\|\hat \phi\|_1^2) \|\phi - \hat \phi\|_1^2,
\end{align}
for all functions $\phi,\hat \phi \in H^1_p(\Omega)$ with uniform constant $\Gamma=\Gamma(f_2^{(2)}, f_3^{(2)},\Omega)$.
\end{lemma}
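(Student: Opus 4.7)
The plan is to expand the relative energy explicitly and then treat the quadratic part and the $f$-Bregman part separately, using Taylor's theorem with integral remainder for the latter.

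Using $\E(\phi)=\int_\Omega \frac{\gamma}{2}|\nabla\phi|^2 + f(\phi)$ and $\langle \E'(\hat\phi),v\rangle = \int_\Omega \gamma\nabla\hat\phi\cdot\nabla v + f'(\hat\phi)v$, a direct computation first rewrites
\begin{align*}
\E_\alpha(\phi|\hat\phi) = \tfrac{\gamma}{2}\|\nabla(\phi-\hat\phi)\|_0^2 + \tfrac{\alpha}{2}\|\phi-\hat\phi\|_0^2 + \int_\Omega B_f(\phi,\hat\phi)\,dx,
\end{align*}
where $B_f(\phi,\hat\phi):=f(\phi)-f(\hat\phi)-f'(\hat\phi)(\phi-\hat\phi)$. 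The gradient terms already contribute the clean $\tfrac{\gamma}{2}\|\nabla(\phi-\hat\phi)\|_0^2$ needed for the lower bound, so the whole task reduces to bounding $B_f$ pointwise from above and below.

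For the lower bound I would use the integral remainder $B_f(\phi,\hat\phi)=\int_0^1 (1-\theta)\,f''(\hat\phi+\theta(\phi-\hat\phi))\,d\theta\,(\phi-\hat\phi)^2$ together with the assumption $f''\geq -f_1$ from (A3) to obtain $B_f(\phi,\hat\phi)\geq -\tfrac{f_1}{2}(\phi-\hat\phi)^2$. Plugging this into the expansion and using the choice (A4), namely $\alpha-f_1\geq \gamma$, yields
\begin{align*}
\E_\alpha(\phi|\hat\phi) \ge \tfrac{\gamma}{2}\|\nabla(\phi-\hat\phi)\|_0^2 + \tfrac{\gamma}{2}\|\phi-\hat\phi\|_0^2 = \tfrac{\gamma}{2}\|\phi-\hat\phi\|_1^2,
\end{align*}
which is the lower estimate.

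For the upper bound I would again use the integral representation of $B_f$ together with the quadratic growth bound $|f''(s)|\le f_2^{(2)} + f_3^{(2)}|s|^2$ from (A3). The bound $|\hat\phi+\theta(\phi-\hat\phi)|^2 \le 2(|\phi|^2+|\hat\phi|^2)$ yields a pointwise estimate
\begin{align*}
|B_f(\phi,\hat\phi)| \le C\bigl(f_2^{(2)} + f_3^{(2)}(|\phi|^2+|\hat\phi|^2)\bigr)(\phi-\hat\phi)^2.
\end{align*}
The $f_2^{(2)}$ contribution is controlled by $\|\phi-\hat\phi\|_0^2$. The key nonlinear term is $\int_\Omega(|\phi|^2+|\hat\phi|^2)(\phi-\hat\phi)^2\,dx$, which by H\"older and the Sobolev embedding $H^1(\Omega)\hookrightarrow L^4(\Omega)$ valid in $d=2,3$ is bounded by $C(\|\phi\|_1^2+\|\hat\phi\|_1^2)\|\phi-\hat\phi\|_1^2$. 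Collecting everything and absorbing the $\tfrac{\gamma}{2}\|\nabla(\phi-\hat\phi)\|_0^2$ and $\tfrac{\alpha}{2}\|\phi-\hat\phi\|_0^2$ terms into the constant prefactor gives the claimed upper bound with $\Gamma=\Gamma(f_2^{(2)},f_3^{(2)},\Omega)$.

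The main technical point, and where the dimensional restriction $d\le 3$ enters, is the use of $H^1\hookrightarrow L^4$ to handle the cubic-in-$\phi$ growth of $B_f$; everything else is algebraic manipulation of Taylor's formula and the specific choice of $\alpha$ from (A4).
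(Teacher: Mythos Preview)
Your proof is correct and follows essentially the same approach as the paper: both expand the relative energy into the quadratic part and the Bregman remainder $\int_\Omega f(\phi|\hat\phi)\,dx$, use $f''\ge -f_1$ with (A4) for the lower bound, and the growth bound $|f''(s)|\le f_2^{(2)}+f_3^{(2)}|s|^2$ together with H\"older and $H^1\hookrightarrow L^4$ for the upper bound. Your version is in fact slightly more precise, keeping the $(1-\theta)$ factor in the Taylor remainder and making the role of (A4) in the inequality $\alpha-f_1\ge\gamma$ explicit.
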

\begin{proof}
For $g(x)=|x|^2$, one has $g(x|\hat x) = g(x) - g(\hat x) - \la g'(\hat x), (x-\hat x) \rangle = |x-\hat x|^2$ which allows to handle the quadratic contributions. It thus suffices to consider the nonlinear terms in $\E_\alpha(\phi|\hat \phi)$.  
The lower bound then follows directly from noting that 
\begin{align*}
\int_\Omega f(\phi|\hat \phi) dx  \ge -\frac{f_1}{2} \|\phi - \hat \phi\|_0^2
\end{align*}
and the particular choice of $\alpha$.
For the upper bound, we use the growth bounds for $f$, which allow us to show that
\begin{align*}
\int_\Omega f(\phi|\hat \phi) dx 
&= \int_\Omega \int_0^1 f''(s \phi + (1-s) \hat \phi) ds \, (\phi - \hat \phi)^2 dx \\
&\le (f_2^{(2)} + f_3^{(2)} (|\phi|^2 + |\hat \phi|^2)) |\phi - \hat \phi|^2 dx \\
&\le f_2^{(2)} \|\phi - \hat \phi\|^2_0 + f_3^{(2)} (\|\phi\|_{0,4}^2 + \|\hat \phi\|_{0,4}^2) \|\phi - \hat \phi\|_{0,4}^2.
\end{align*}
In the last step, we simply used H\"older's inequality, and by embedding of $H^1$ into $L^p$, we may estimate $\|\phi\|_{0,4} \le C(\Omega) \|\phi\|_1$, which yields the required upper bound. 
\end{proof}
%

%
Using the specific problem structure and elementary computations, we can now derive the following stability estimate which will be the basis for our further considerations.
\begin{theorem} \label{thm:main}
Let (A1)--(A4) hold and $(\phi,\mu) \in \WW(0,T) \times \QQ(0,T)$ denote a periodic weak solution of \eqref{eq:ch1}--\eqref{eq:ch2}. 
Furthermore, let $\hat \phi \in \WW(0,T) \cap W^{1,1}(0,T;L^2(\Omega))$ 
and $\hat \mu \in \QQ(0,T)$ be given and $(\hat r_1,\hat r_2)$ be defined by \eqref{eq:weak1p}--\eqref{eq:weak2p}. Then
\begin{align} \label{eq:stability}
  \E_\alpha(\phi(t)|\hat \phi(t)) 
   &+ \int_0^t \D_{\phi(s)}(\mu(s)|\hat \mu(s)) \, ds  \\
&\le e^{c (t)} \E_\alpha(\phi(0)|\hat \phi(0)) + C e^{c(t)}\int_0^t \|\hat r_1(s)\|^2_{-1} + \|\hat r_2(s)\|^2_{1} \, ds \notag
\end{align}
with relative dissipation functional $\D_{\phi}(\mu|\hat \mu) = \frac{1}{2} \|b^{1/2}(\phi) \nabla (\mu - \hat \mu)\|^2_0$, parameter $c(t)=c_0 t +c_1 \int_0^t \|\dt \hat\phi(s)\|_0 ds$, and constants $c_0,c_1,C$ depending only on the domain $\Omega$ and the uniform bounds for the functions $(\phi,\mu)$ and $(\hat \phi,\hat \mu)$ in $L^\infty(H^1) \times L^2(L^2)$.
\end{theorem}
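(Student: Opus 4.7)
The plan is to derive an identity for $\frac{d}{dt}\E_\alpha(\phi|\hat\phi)$ by a formal chain rule, cleverly substituting the weak formulations to expose the relative dissipation, and then to close the estimate by Gronwall.

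\emph{Step 1 (identity).} Setting $\psi = \phi - \hat\phi$, chain rule gives formally
\begin{align*}
\frac{d}{dt}\E_\alpha(\phi|\hat\phi) = \la \E'(\phi) - \E'(\hat\phi), \dt\phi\ra - \la \E''(\hat\phi)\dt\hat\phi, \psi\ra + \alpha\la \psi, \dt\psi\ra,
\end{align*}
where the $\dt\psi$ contributions cancel since $\dt\psi = \dt\phi - \dt\hat\phi$. Using $\E'(\phi)=\mu$ (from \eqref{eq:weak2}) and $\E'(\hat\phi)=\hat\mu-\hat r_2$ (from \eqref{eq:weak2p}), I rewrite the first term as $\la\mu-\hat\mu, \dt\phi\ra + \la\hat r_2,\dt\phi\ra$. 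Splitting $\dt\phi = \dt\hat\phi + \dt\psi$, I then test the difference of \eqref{eq:weak1}--\eqref{eq:weak1p} with $v=\mu-\hat\mu$ to get $\la\dt\psi, \mu-\hat\mu\ra = -2\D_\phi(\mu|\hat\mu) - \la\hat r_1, \mu-\hat\mu\ra$, and test the difference of \eqref{eq:weak2}--\eqref{eq:weak2p} with $w = \dt\hat\phi$ to get $\la\mu-\hat\mu,\dt\hat\phi\ra = \gamma\la\nabla\psi,\nabla\dt\hat\phi\ra + \la f'(\phi) - f'(\hat\phi), \dt\hat\phi\ra - \la \hat r_2, \dt\hat\phi\ra$. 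The $\gamma$-contribution cancels against the corresponding term in $\la\E''(\hat\phi)\dt\hat\phi,\psi\ra = \gamma\la\nabla\dt\hat\phi,\nabla\psi\ra + \la f''(\hat\phi)\dt\hat\phi, \psi\ra$, and regrouping yields the balance
\begin{align*}
\frac{d}{dt}\E_\alpha + 2\D_\phi(\mu|\hat\mu) = \la f'(\phi)-f'(\hat\phi)-f''(\hat\phi)\psi,\dt\hat\phi\ra + \la\hat r_2,\dt\psi\ra - \la\hat r_1,\mu-\hat\mu\ra + \alpha\la\psi,\dt\psi\ra.
\end{align*}

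\emph{Step 2 (bounds on the right-hand side).} Taylor's theorem together with (A3) gives $|f'(\phi)-f'(\hat\phi)-f''(\hat\phi)\psi| \le C(1+|\phi|+|\hat\phi|)|\psi|^2$. Combining this with Hölder, the Sobolev embeddings $H^1\hookrightarrow L^4,L^6$, and the $L^\infty(H^1)$ bounds on $\phi,\hat\phi$ gives a bound of $C\|\dt\hat\phi\|_0\|\psi\|_1^2 \le C'\|\dt\hat\phi\|_0\E_\alpha(\phi|\hat\phi)$ via Lemma~\ref{lem:equiv}. For $\la\hat r_2,\dt\psi\ra$, the difference of \eqref{eq:weak1}--\eqref{eq:weak1p} shows that $\|\dt\psi\|_{-1} \le C\|b^{1/2}(\phi)\nabla(\mu-\hat\mu)\|_0 + \|\hat r_1\|_{-1}$, so Young's inequality absorbs a fraction of $\D_\phi$ at the price of $C(\|\hat r_2\|_1^2 + \|\hat r_1\|_{-1}^2)$. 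The term $\la\hat r_1,\mu-\hat\mu\ra$ is controlled directly by duality, using that testing \eqref{eq:weak1}--\eqref{eq:weak1p} with $v=1$ pins down the mean of $\mu-\hat\mu$ modulo $\hat r_1$, so that Poincaré plus Young give $\le \varepsilon\D_\phi + C\|\hat r_1\|_{-1}^2$. Finally, testing the difference of the first equations with $v=\psi$ yields $\alpha\la\psi,\dt\psi\ra = -\alpha\la b(\phi)\nabla(\mu-\hat\mu),\nabla\psi\ra - \alpha\la\hat r_1,\psi\ra$, which is bounded by $\varepsilon\D_\phi + C\E_\alpha + C\|\hat r_1\|_{-1}^2$ by Young and Lemma~\ref{lem:equiv}.

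\emph{Step 3 (Gronwall).} With $\varepsilon$ small enough that the $\D_\phi$-fractions are absorbed into the left-hand side, the balance becomes
\begin{align*}
\frac{d}{dt}\E_\alpha + \D_\phi(\mu|\hat\mu) \le \bigl(c_0 + c_1\|\dt\hat\phi\|_0\bigr)\E_\alpha + C\bigl(\|\hat r_1\|_{-1}^2 + \|\hat r_2\|_1^2\bigr),
\end{align*}
and the stated estimate \eqref{eq:stability} follows from the standard Gronwall lemma applied with the exponential weight $c(t) = c_0 t + c_1\int_0^t \|\dt\hat\phi(s)\|_0\,ds$.

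The main obstacle is the rigorous justification of the formal chain rule in Step~1: under the stated regularity $\phi\in\WW(0,T)$, the derivative $\dt\phi$ lies only in $(H^1_p)'$, so $\frac{d}{dt}\E(\phi(t))$ is not pointwise defined and the manipulations above are only formally valid. The natural remedy, which the authors defer to the appendix, is to derive the identity first for smooth Galerkin approximations of $(\phi,\mu)$ and then pass to the limit in its integrated form, exploiting weak(-$\ast$) lower semicontinuity of $\E_\alpha$ and $\D_\phi$. A secondary technical subtlety is that the dissipation $\D_\phi$ controls only $\nabla(\mu-\hat\mu)$, so the $L^2$-part of $\|\mu-\hat\mu\|_1$ appearing in the residual estimate for $\la\hat r_1,\mu-\hat\mu\ra$ must be recovered through the mass-balance test function $v=1$ combined with a Poincaré-type argument.
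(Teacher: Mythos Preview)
Your proposal is essentially the same approach as the paper's: you differentiate the relative energy, substitute the variational identities to expose the dissipation, and close by Gronwall. Your balance identity in Step~1 is, after expanding $\la\hat r_2,\dt\psi\ra$ and $\alpha\la\psi,\dt\psi\ra$ via the difference of the first equations, term-by-term identical to the paper's five-term decomposition $(i)$--$(v)$; the only difference is the order in which you regroup before substituting.

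There is one genuine slip in Step~2. To control $\la\hat r_1,\mu-\hat\mu\ra$ you need the full $H^1$-norm of $\mu-\hat\mu$, and since the dissipation only gives $\nabla(\mu-\hat\mu)$, you must recover the mean $\la\mu-\hat\mu,1\ra$. You propose to do this by testing \eqref{eq:weak1}--\eqref{eq:weak1p} with $v=1$, but that only yields $\la\dt\psi,1\ra=-\la\hat r_1,1\ra$ and says nothing about the mean of $\mu-\hat\mu$. The correct move, which the paper makes, is to test the \emph{second} pair \eqref{eq:weak2}--\eqref{eq:weak2p} with $w=1$, giving
\[
\la\mu-\hat\mu,1\ra = \la f'(\phi)-f'(\hat\phi),1\ra - \la\hat r_2,1\ra,
\]
and then bound $\|f'(\phi)-f'(\hat\phi)\|_{0,1}\le C\|\psi\|_1$ via the growth assumption (A3) and the $L^\infty(H^1)$ bounds. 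This produces an additional $\E_\alpha(\phi|\hat\phi)$ contribution (and an $\|\hat r_2\|_1^2$ term) in the estimate for this piece, not just $\varepsilon\D_\phi+C\|\hat r_1\|_{-1}^2$ as you wrote. The extra $\E_\alpha$ term is harmless---it is absorbed into the Gronwall constant $c_0$---so the overall argument survives, but the equation you point to is the wrong one.

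A minor remark: for the density argument deferred to the appendix, the paper uses Meyers--Serrin mollification rather than Galerkin approximations, but either works.
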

\begin{proof}
For ease of presentation, we assume for the moment that $(\phi,\mu)$ and $(\hat \phi,\hat \mu)$ are sufficiently regular, such that all computations in the following are justified. The general case can then be deduced by a density argument; details are given in the appendix.  
By formal differentiation of the relative energy with respect to time, we get
\begin{align*}
\ddt \E_\alpha(\phi|\hat \phi) 
&= \la \E_\alpha'(\phi), \dt \phi\ra - \la \E_\alpha'(\hat \phi), \dt \hat \phi\ra -  
    \la \E_\alpha'(\hat \phi), \dt \phi - \dt \hat \phi \ra - \la \E_\alpha''(\hat \phi) \dt \hat \phi, \phi - \hat \phi\ra \\
&= \la \E_\alpha'(\phi) - \E_\alpha'(\hat \phi), \dt \phi - \dt \hat \phi \ra + 
   \la \E_\alpha'(\phi) - \E_\alpha'(\hat \phi) - \E_\alpha''(\hat \phi) (\phi-\hat \phi), \dt \hat \phi  \ra. 
\end{align*}
Inserting the definition of the relative energy $\E_\alpha$ and using the variational identities \eqref{eq:weak1}--\eqref{eq:weak2} and \eqref{eq:weak1p}--\eqref{eq:weak2p}, which are satisfied by the functions $(\phi,\mu)$ and $(\hat \phi, \hat \mu)$,  
we obtain
\begin{align*}
\ddt \E_\alpha(\phi|\hat \phi) 
&= \gamma \la \nabla \phi - \nabla \hat \phi, \nabla \dt \phi - \nabla \dt \hat \phi\ra 
    +\la f'(\phi) - f'(\hat\phi), \dt\phi - \dt\hat\phi \ra \\
    &\qquad \qquad  + \alpha \la \phi - \hat \phi, \dt \phi - \dt \hat \phi\ra 
   + \la f'(\phi) - f'(\hat \phi) - f''(\hat \phi) (\phi - \hat \phi), \dt \hat \phi\ra \\
&= \la \mu - \hat \mu + \hat r_2, \dt \phi - \dt \hat \phi\ra 
   + \alpha \la \phi- \hat \phi, \dt \phi - \dt \hat \phi\ra 
\\ &\qquad \qquad 
   + \la f'(\phi) - f'(\hat \phi) - f''(\hat \phi) (\phi - \hat \phi), \dt \hat \phi\ra \\
&=  -\la b(\phi) \nabla (\mu - \hat \mu), \nabla(\mu-\hat \mu + \hat r_2)\ra - \la r_1, \mu - \hat \mu + \hat r_2 \ra \\
& \qquad \qquad - \alpha \la b(\phi) \nabla (\mu - \hat \mu), \nabla (\phi - \hat \phi) \ra - \alpha \la r_1, \phi - \hat \phi \ra \\
& \qquad \qquad + \la f'(\phi) - f'(\hat \phi) - f''(\hat \phi)(\phi - \hat \phi), \dt \hat \phi \ra \\
&= (i) + (ii) + (iii) + (iv) + (v). 
\end{align*}
In what follows, we will estimate the individual terms of the last line separately.
Before we proceed, let us note that by the energy bounds for weak solutions $(\phi,\mu)$, see Lemma~\ref{lem:weak}, and by the assumptions on $\hat \phi$ in the statement of the Lemma, we know that
\begin{align} \label{eq:bound}
    \|\phi\|_{L^\infty(H^1)} \le C(\|\phi_0\|_1) 
    \qquad \text{and} \quad 
    \|\hat \phi\|_{L^\infty(H^1)}, \|\dt \hat \phi\|_{L^1(L^2)} \le \hat C. 
\end{align}
Using Hölder's and Young's inequalities, we can then bound 
\begin{align*}
(i) &= -\norm{b^{1/2}(\phi)\na(\mu-\hat\mu)}_0^2 + \la b(\phi)\na(\mu-\hat\mu), \hat r_2\ra \\
&\leq -(2-2\delta) \D_\phi(\mu|\hat \mu) + C(\delta,b_2)\norm{\hat r_2}_1^2,    
\end{align*}
with $\delta>0$ arbitrary, constant $C(\delta,b_2) = b_2/(4\delta)$, and $b_2$ denoting the upper bound for the function $b$ in assumption (A2). 
By definition of the dual norm, a Poincar\'e inequality, and the bounds for the coefficients, the second term can be further estimated by
\begin{align*}
(ii) &\leq \norm{\hat r_1}_{-1}\left(\norm{\mu-\hat\mu}_1+\norm{\hat r_2}_1\right) \\
&\leq \norm{\hat r_1}_{-1}\left( C(\Omega) |\la \mu-\hat\mu,1 \ra| + C(\Omega,b_1)\norm{b^{1/2}(\phi)\na(\mu-\hat\mu)}_2+\norm{\hat r_2}_1\right)  \\
&\leq  C(\Omega,b_1,\delta) \norm{\hat r_1}_{-1}^2 + |\la \mu-\hat\mu,1 \ra|^2 + 2 \delta \, D_\phi(\mu|\hat\mu) +  \norm{\hat r_2}_1^2.
\end{align*}
In the last step, we utilized Youngs' inequality to separate the factors with the same arbitrary parameter $\delta>0$ as before.
For the second term on the right hand side, we can use the variational identities \eqref{eq:weak2} and \eqref{eq:weak2p} with $w=1$, which leads to
\begin{align*}
    |\la \mu-\hat\mu,1 \ra| 
    &= |\la f'(\phi) -f'(\hat\phi) + \hat r_2,1\ra| \\
    &\leq C(\Omega) (\norm{\hat r_2}_{0,1} + \|f'(\phi)-f'(\hat\phi)\|_{0,1}).
\end{align*}
From the bounds for the potential $f$ in assumption (A3), we can further deduce that 
\begin{align*}
|f'(\phi) - f'(\hat \phi)| 
& = \left|\int_0^1 f''(\hat\phi + s(\phi - \hat \phi)) ds (\phi - \hat \phi) \right| \\
&\le \left(f_2^{(2)} +  f_3^{(2)} (|\phi| + |\hat \phi|)^2\right) |\phi - \hat \phi|.
\end{align*}
An application of H\"older's inequality, the norm estimates for the continuous embedding of $H^1(\Omega)$ into $L^p(\Omega)$, and the uniform bounds for $\phi$, $\hat \phi$ in \eqref{eq:bound}, then lead to
\begin{align*}
\|f'(\phi) - f'(\hat \phi)\|_{0,1} 
&\le  \left( C(\Omega) f_2^{(2)}  + 2 f_3^{(2)} (\|\phi\|_{0,6}^2 + \|\hat \phi\|_{0,6}^2) \right) \|\phi - \hat \phi\|_{0,6} \\
&\le  C(\Omega,f_2^{(2)},f_{3}^{(2)},\|\phi_0\|_1,\|\hat \phi\|_{L^\infty(H^1)}) \|\phi - \hat \phi\|_1.
\end{align*}
Using $\|\hat r_2\|_{0,1} \le C(\Omega)\|\hat r_2\|_1$ and the lower bound \eqref{eq:lower_bound_rel} for the relative energy, we arrive at
\begin{align*}
(ii) 
\le 2\delta \, \D_\phi(\mu|\hat \mu) &+ C(\Omega,b_1,\delta) \norm{r_1}_{-1}^2 + C(\Omega)\norm{\hat r_2}_1^2 \\
&+ C(\Omega,f_2^{(2)},f_3^{(2)},\|\phi_0\|_1, \|\hat \phi\|_{L^\infty(H^1)},\gamma)\,  \E_\alpha(\phi|\hat \phi).
\end{align*}
Condition \eqref{eq:lower_bound_rel} further allows us to estimate 
\begin{align*}
(iii) + (iv) 
&\leq 2\delta \, \D_\phi(\mu|\bar \mu) + C(\delta,b_2,\alpha,\gamma) \E_\alpha(\phi|\hat \phi) + C(\alpha) \norm{r_1}_{-1}^2 .  
\end{align*}
From the bounds in assumption (A3), we can deduce that 
\begin{align*}
|f'(\phi) - f'(\hat \phi) - f''(\hat \phi)(\phi - \hat \phi)| 
&\le \left(f_2^{(3)} + f_3^{(3)}(|\phi| + |\hat \phi|) \right) |\phi - \hat \phi|^2.
\end{align*}
Using H\"olders inequality, embedding estimates, and the uniform bounds in \eqref{eq:bound}, we can further bound the fifth term in the above estimate by 
\begin{align*}
(v) &\le \|\dt \hat \phi\|_{0} \|f'(\phi) - f'(\hat \phi) - f''(\hat \phi)(\phi - \hat \phi)\|_{0} \\
&\le \|\dt \hat \phi\|_{0} \left( f_2^{(3)} + f_3^{(3)} (\|\phi\|_{0,6} + \|\hat \phi\|_{0,6}) \right) \|\phi - \hat \phi)\|_{0,6}^2 \\
&\le C(\Omega,f_2^{(3)},f_3^{(3)},\|\phi_0\|_1,\|\hat \phi\|_{L^\infty(H^1)},\gamma) \|\dt \hat \phi\|_0 \, \E_\alpha(\phi|\hat \phi).
\end{align*}
By combination of the individual estimates and choosing $\delta = 1/6$, we finally obtain 
\begin{align*}
\ddt \E_\alpha(\phi|\hat \phi) 
&\le -\D_\phi(\mu|\hat\mu) + (c_0 + c_1 \|\dt \hat \phi\|_0) \E_\alpha(\phi|\hat \phi) +  
C_2 \|\hat r_1\|_{-1}^2  + C_3 \|\hat r_2\|_{1}^2,
\end{align*}
with constants $c_0,c_1$, $C_2$, $C_3$ depending only on the bounds for the coefficients, the domain, and the bounds for $\|\phi_0\|_1$ and $\|\hat \phi\|_{L^\infty(H^1)}$.  
An application of Gronwall's inequality \eqref{eq:gronwall} with $v(t)=\E_\alpha(\phi(t)|\hat \phi(t))$, $g(t)=-\D_{\phi(t)} + C_2 \|\hat r_1(t)\|_{-1} + C_3 \|\hat r_2(t)\|_{1}$, and $\lambda(t)=c_0 + c_1 \|\dt \hat \phi(t)\|_0$,
which is integrable since $\dt \hat \phi \in L^1(L^2)$, then leads to the stability estimate of the theorem with constants $c=c_0 T + c_1 \hat C$ and $C=\max\{C_2,C_3\}$. 
\end{proof}

\begin{remark}
The lower bound \eqref{eq:lower_bound_rel} for the relative energy, and the bound
\begin{align}
\frac{b_1}{2}\norm{\nabla\mu-\nabla\hat\mu}_{0}^2 \leq \D_\phi(\mu|\hat\mu), \label{eq:lower_bound_rel_dis}
\end{align}
for the relative dissipation immediately lead to uniform bounds 
\begin{align*}
\|\phi - \hat \phi\|_{L^\infty(H^1)}^2 + \|\mu - \hat \mu\|_{L^2(H^1)}^2 
\le C_1 \E_\alpha(\phi(0)|\hat\phi(0)) + C_2 (\|\hat r_1\|_{L^2(H^{-1}_p)}^2 + \|\hat r_2\|_{L^2(H^1)}^2) 
\end{align*}
for the error. With similar arguments as used for the estimate of the term (ii), we can also bound the full norm $\|\mu - \hat \mu\|_{L^2(H^1_p)}$.    
The stability estimate thus provides perturbation bounds in the the natural norms to be used for the error analysis of the problem.
\end{remark}

\subsection{A weak-strong uniqueness principle}

As a direct consequence of Theorem \ref{thm:main}, one can see that (sufficiently regular) weak solutions of \eqref{eq:ch1}--\eqref{eq:ch2} depend stably on perturbations in the problem parameters and the initial data. 
Another consequence of Theorem~\ref{thm:main} is the following weak-strong uniqueness principle. 
\begin{theorem} \label{thm:unique}
Let $(\hat \phi, \hat \mu)$ denote a periodic weak solution of \eqref{eq:ch1}--\eqref{eq:ch2} with improved regularity $\hat \phi \in W^{1,1}(0,T;L^2(\Omega))$ and $\hat \mu \in L^2(0,T;W_p^{1,3}(\Omega))$.
Then no other weak solution $(\phi,\mu)$ with the same initial values $\phi(0)=\hat \phi(0)$ can exist. 
\end{theorem}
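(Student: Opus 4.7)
The plan is to apply the stability estimate of Theorem~\ref{thm:main} with the strong solution $(\hat\phi,\hat\mu)$ in the role of the perturbed pair, and then to exploit the extra integrability of $\nabla\hat\mu$ to absorb the surviving residual into a Gronwall argument. Since $(\hat\phi,\hat\mu)$ is itself a weak solution of \eqref{eq:ch1}--\eqref{eq:ch2}, the perturbed identities \eqref{eq:weak1p}--\eqref{eq:weak2p} differ from \eqref{eq:weak1}--\eqref{eq:weak2} only through the replacement of $b(\hat\phi)$ by $b(\phi)$ in the first equation, while the second equation is satisfied exactly. Hence $\hat r_2 \equiv 0$ and
\[
\la \hat r_1(t), v\ra = \la (b(\phi(t)) - b(\hat\phi(t)))\,\nabla \hat\mu(t), \nabla v\ra \quad \text{for all } v \in H^1_p(\Omega).
\]

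The crucial step is to bound $\|\hat r_1\|_{-1}$ pointwise in time by a multiple of the relative energy. Using the Lipschitz bound $|b(\phi)-b(\hat\phi)| \le b_3 |\phi - \hat\phi|$ from (A2), H\"older's inequality with exponents $(6,3,2)$, the embedding $H^1_p \hookrightarrow L^6$ (valid in $d=2,3$), and the lower bound of Lemma~\ref{lem:equiv}, one obtains
\[
\|\hat r_1(t)\|_{-1}^2 \le C\,\|\nabla\hat\mu(t)\|_{0,3}^2 \,\|\phi(t) - \hat\phi(t)\|_1^2 \le C'\, \|\nabla\hat\mu(t)\|_{0,3}^2 \,\E_\alpha(\phi(t)|\hat\phi(t)),
\]
with $C'$ depending on $\gamma$, $b_3$, and $\Omega$. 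The additional regularity $\hat\mu \in L^2(0,T;W^{1,3}_p)$ is exactly what makes the weight $s \mapsto \|\nabla\hat\mu(s)\|_{0,3}^2$ integrable on $(0,T)$, and $\hat\phi \in W^{1,1}(0,T;L^2)$ ensures that the exponential prefactor $e^{c(t)}$ in \eqref{eq:stability} is finite.

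Inserting these findings into \eqref{eq:stability}, and using $\E_\alpha(\phi(0)|\hat\phi(0)) = 0$ because of the shared initial datum, yields
\[
\E_\alpha(\phi(t)|\hat\phi(t)) \le C e^{c(t)}\int_0^t \|\nabla\hat\mu(s)\|_{0,3}^2\, \E_\alpha(\phi(s)|\hat\phi(s))\, ds.
\]
A second application of Gronwall's inequality \eqref{eq:gronwall} with the integrable weight $C e^{c(T)}\|\nabla\hat\mu(\cdot)\|_{0,3}^2$ then forces $\E_\alpha(\phi|\hat\phi) \equiv 0$ on $(0,T)$, and Lemma~\ref{lem:equiv} gives $\phi \equiv \hat\phi$. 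Feeding this back into \eqref{eq:stability} makes the right-hand side vanish, so $\int_0^t \D_{\phi(s)}(\mu(s)|\hat\mu(s))\, ds = 0$; by \eqref{eq:lower_bound_rel_dis} this yields $\nabla\mu \equiv \nabla\hat\mu$, and testing \eqref{eq:weak2}, \eqref{eq:weak2p} with $w=1$ (using the now-established $\phi = \hat\phi$) shows that the spatial averages of $\mu$ and $\hat\mu$ agree, whence $\mu \equiv \hat\mu$.

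The main obstacle is the residual estimate: we need $\|\hat r_1\|_{-1}^2$ to be controlled by the relative energy up to an $L^1$-in-time factor, since otherwise Gronwall cannot close the loop. This is precisely why the hypothesis on $\hat\mu$ is strengthened from the weak-solution regularity $L^2(H^1)$ to $L^2(W^{1,3}_p)$: the exponent $3$ pairs with the worst Sobolev embedding $H^1 \hookrightarrow L^6$ of $\phi - \hat\phi$ in dimension three, leaving a plain $H^1$-norm for the test function $v$ and an $L^2$-in-time weight, both of which are compatible with the framework of Theorem~\ref{thm:main}.
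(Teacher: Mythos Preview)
Your proof is correct and follows essentially the same approach as the paper: identify the residuals $\hat r_2=0$ and $\la \hat r_1,v\ra=\la (b(\phi)-b(\hat\phi))\nabla\hat\mu,\nabla v\ra$, bound $\|\hat r_1\|_{-1}^2$ by $C\|\nabla\hat\mu\|_{0,3}^2\,\E_\alpha(\phi|\hat\phi)$ via H\"older $(6,3,2)$ and Lemma~\ref{lem:equiv}, then close with Gronwall using the $L^2(W^{1,3}_p)$ integrability of $\hat\mu$. Your additional step deducing $\mu\equiv\hat\mu$ is a welcome completion that the paper leaves implicit; note that once $\phi=\hat\phi$ is known, $\mu=\hat\mu$ also follows directly from subtracting the two instances of \eqref{eq:weak2}, which is slightly simpler than going through the dissipation term.
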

\begin{proof}
Let $(\phi,\mu)$ be a weak solution with the same initial values $\phi(0)=\hat \phi(0)$. %
Then $(\hat \phi,\hat \mu)$ can be seen to solve \eqref{eq:weak1p}--\eqref{eq:weak2p} 
with residuals 
\begin{align*}
    \la \hat r_1,v\ra = \la (b(\phi)-b(\hat\phi))\na\hat\mu, \nabla v\ra 
    \qquad \text{and} \qquad \hat r_2=0,
\end{align*}
and the first residual can be further estimated by 
\begin{align*}
    \int_0^t\|\hat r_1(s)\|_{-1}^2 ds &\le \int_0^t\| (b(\phi(s))-b(\hat\phi(s)))\na\hat\mu(s) \|_0^2 ds\\
    &\leq C(b_3)\int_0^t\|\phi(s) -\hat \phi(s)\|_{0,6}^2\|\na\hat\mu(s)\|_{0,3}^2 ds\\
    &\leq C(b_3,\gamma,\Omega) \int_0^t\|\na\hat\mu(s)\|_{0,3}^2\E(\phi(s)|\hat\phi(s)) ds.
\end{align*}
By assumption on the initial values, we have $\E_\alpha(\phi(0)|\hat \phi(0))=0$, and the estimate of Theorem~\ref{thm:main} thus directly leads to 
\begin{align*}
\E_\alpha(\phi(t)|\hat \phi(t)) 
   + \int_0^t \D_{\phi(s)}(\mu(s)|\hat \mu(s)) \, ds 
   \le C(T) \int_0^t \|\na\hat\mu(s)\|_{0,3}^2\E(\phi(s)|\hat\phi(s)) ds.
\end{align*}
Since $\|\nabla \hat \mu(t)\|_{0,3}^2 \in L^1(0,T)$, we can use Gronwall's inequality \eqref{eq:gronwall} once more, leading to $\E_\alpha(\phi(t)|\hat \phi(t)) \le 0$ for $0 \le t \le T$, which together with Lemma~\ref{lem:equiv} yields the claim. 
\end{proof}

\begin{remark}
Note that for regular initial values, e.g., $\hat \phi(0)=\phi_0 \in H^2_p(\Omega) $, the existence of a weak solution $(\hat \phi, \hat \mu)$ with the required extra regularity follows from Lemma~\ref{lem:weak}. In that case, we therefore have a unique weak solution.
\end{remark}

\section{Galerkin semi-discretization} \label{sec:4}

We now turn to the discretization of \eqref{eq:weak1}--\eqref{eq:weak2} in space,
for which we consider a conforming Galerkin approximation of the variational principle \eqref{eq:weak1}--\eqref{eq:weak2} with \emph{second order} conforming finite elements.
As will become clear from our analysis, higher order and, to some extent, also non-conforming approximations could be treated with similar arguments.

Let $\Th$ denote geometrically conforming partition of $\Omega \subset \RR^d$, $d=2,3$ into triangles or tetrahedra. As usual, we denote by $\rho_N$ and $h_N$ the inner-circle radius and diameter of the element $K \in \Th$ and call $h=\max_{K \in \Th} h_T$ the global mesh size. We assume that $\Th$ is quasi-uniform, i.e., there exists a constant $\sigma>0$ such that $\sigma h \le \rho_N \le h_N \le h$ for all $K \in \Th$. 
We further assume that the mesh $\Th$ is periodic in the sense that it can be extended periodically to periodic extensions of the domain $\Omega$. 
We then denote by 
\begin{align*}
    \Vh := \{v \in H^1_p(\Omega) : v|_N \in P_2(K) \quad \forall K \in \Th\},
\end{align*}
the space of continuous periodic piecewise quadratic polynomials over the mesh $\Th$.  
We further introduce the approximation spaces
$$
\WW_h(0,T):= H^1(0,T;\Vh) \qquad \text{and} \qquad \QQ_h(0,T):= L^2(0,T;\Vh).
$$
The semi-discrete approximation for \eqref{eq:weak1}--\eqref{eq:weak2} then reads as follows. 
\begin{problem} \label{prob:semi}
Let $\phi_{0,h} \in \Vh$ be given. Find $(\phi_h,\mu_h) \in \WW_h(0,T)\times\QQ_h(0,T)$  such that $\phi_h(0)=\phi_{0,h}$ and
such that for all $vh, w_h \in \Vh$ and all $0 \le t \le T$, there holds
\begin{align}
 \la\dt \phi_h(t), v_h\ra + \la b(\phi_h(t)) \nabla \mu_h(t), \nabla v_h\ra &= 0, \label{eq:weak1h}
\\
 \la \mu_h(t), w_h\ra - \la\gamma \nabla \phi_h(t), \nabla w_h\ra - \la f'( \phi_h(t)),w_h\ra &= 0. \label{eq:weak2h}
\end{align}
\end{problem}
Before we turn to a detailed stability and error analysis, let us briefly summarize some basic properties of this discretization strategy. 
\begin{lemma}
\label{lem:energy}
Let (A1)--(A3) hold. Then for any initial value $\phi_{0,h} \in \Vh$, Problem~\ref{prob:semi} has a unique solution $(\phi_h,\mu_h)$. 
Moreover, for all $0 \le t \le T$, one has $\int_\Omega \phi_h(t) dx = \int_\Omega \phi_{0,h} dx$ as well as $\E(\phi_h(t))  + \int_0^t \D_{\phi_h}(\mu_h) ds = \E(\phi_{0,h})$.
\end{lemma}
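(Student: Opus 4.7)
The plan is to reduce Problem~\ref{prob:semi} to a finite-dimensional ODE system for the coefficients of $\phi_h$, use Picard-Lindel\"of for local well-posedness, then mimic the formal testing arguments from the continuous case to obtain mass conservation and an energy equality, and finally upgrade the local solution to a global one via the resulting a-priori bound together with equivalence of norms on $\Vh$.

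First I would fix a basis $\{\psi_i\}_{i=1}^N$ of $\Vh$ and expand $\phi_h = \sum_i \phi_i(t)\psi_i$, $\mu_h = \sum_i \mu_i(t)\psi_i$. Testing \eqref{eq:weak2h} against each $\psi_i$ produces an algebraic relation $M\underline{\mu} = \gamma K\underline{\phi} + F(\underline{\phi})$ with a symmetric positive definite mass matrix $M$, so $\mu_h$ is recovered from $\phi_h$ as a $C^2$-function via (A3). Substituting this representation into \eqref{eq:weak1h} tested against basis functions yields an explicit system $M\dot{\underline{\phi}} = G(\underline{\phi})$ whose right hand side is locally Lipschitz thanks to the $C^2$-regularity of $b$ and $f'$ in (A2)-(A3), and Picard-Lindel\"of produces a unique local solution on some maximal interval $[0,T_h^*)$.

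Next I would derive the structural identities by direct testing, exactly as in the proof of Lemma~\ref{lem:weak}. Since $1\in \Vh$, the choice $v_h\equiv 1$ in \eqref{eq:weak1h} yields $\ddt\int_\Omega \phi_h\,dx=0$, hence mass conservation. Choosing $v_h=\mu_h(t)$ in \eqref{eq:weak1h} and $w_h=\dt\phi_h(t)$ in \eqref{eq:weak2h} -- both admissible because $\mu_h(t),\dt\phi_h(t)\in\Vh$ by definition of $\QQ_h(0,T)$ and $\WW_h(0,T)$ -- and subtracting gives $\ddt\E(\phi_h) = -\D_{\phi_h}(\mu_h)$; integrating in time delivers the claimed energy identity.

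To promote the local solution to a global one, I would combine this energy identity with the lower bound $f\ge -f_1$ from (A3) to bound $\gamma\|\na\phi_h(t)\|_0^2$ uniformly on $[0,T_h^*)$; together with the conserved mass this bounds $\|\phi_h(t)\|_1$ via Poincar\'e's inequality. Since all norms are equivalent on the finite-dimensional space $\Vh$, the coefficient vector $\underline{\phi}(t)$ remains in a fixed compact set, which excludes finite-time blow-up and forces continuation to $[0,T]$. The only mild subtlety I anticipate is the seemingly circular interplay between local existence and the a-priori estimate used for global extension, but this is resolved by the usual continuation-of-solutions argument: the identities derived above hold on any subinterval on which the local solution exists, so the bound is available on $[0,T_h^*)$ and forces $T_h^* = T$.
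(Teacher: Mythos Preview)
Your proposal is correct and follows essentially the same approach as the paper: reduction to a finite-dimensional ODE and appeal to Picard--Lindel\"of, followed by the testing choices $(v_h,w_h)=(1,0)$ and $(v_h,w_h)=(\mu_h,\dt\phi_h)$ to obtain mass conservation and the energy identity. You are in fact more careful than the paper's one-line sketch, making explicit the local-to-global continuation via the a-priori $H^1$-bound and equivalence of norms on $\Vh$, which the paper simply subsumes under ``deduced from the Picard--Lindel\"of theorem.''
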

\begin{proof}
Using (A1)--(A3), existence of a unique solution $(\phi_h,\mu_h) \in C^1(0,T;\Vh) \times C^0(0,T;\Vh)$ can be deduced from the Picard-Lindelöf theorem. The mass conservation and energy dissipation identities then follow with similar arguments as on the continuous level by testing equations \eqref{eq:weak1h}--\eqref{eq:weak2h} with $(v_h,w_h)=(1,0)$ and $(v_h,w_h)=(\mu_h,\dt \phi_h)$, respectively. 
\end{proof}

\subsection{Semi-discrete stability estimate}

With similar arguments as used on the continuous level, we will now establish stability of the semi-discrete solution with respect to perturbations. 
For a given pair of functions $(\hat \phi_h,\hat \mu_h)\in \WW_h(0,T)\times \QQ_h(0,T)$, we define semi-discrete residuals $(\hat r_{1,h},\hat r_{2,h}) \in L^2(0,T;\Vh \times \Vh)$ by the variational identities
\begin{align}
 \la\dt \hat \phi_h(t), v_h\ra + \la b(\phi_h(t)) \nabla \hat \mu_h(t), \nabla v_h\ra &=: \la \hat r_{1,h}(t),v_h \ra, \label{eq:weak1hp}
\\
 \la \hat \mu_h(t), w_h\ra - \la\gamma \nabla \hat \phi_h(t), \nabla w_h\ra - \la f'(\hat \phi_h(t)),w_h\ra &=: \la \hat r_{2,h}(t),w_h\ra,\label{eq:weak2hp}
\end{align}
for all $v_h, w_h \in \Vh$ and $0 \le t \le T$. 
The functions $(\hat\phi_h,\hat\mu_h)$ can again be understood as solutions of the perturbed semi-discrete problem \eqref{eq:weak1hp}--\eqref{eq:weak2hp}.
With almost identical arguments as used in the proof of Theorem~\ref{thm:main}, 
we now obtain the following stability estimate. 
\begin{lemma} \label{lem:semi}
Let (A1)--(A4) hold and $(\phi_h,\mu_h) \in \WW_h(0,T)\times \QQ_h(0,T)$ denote a solution of Problem~\ref{prob:semi}.
Furthermore, let 
$(\hat \phi_h,\hat \mu_h) \in \WW_h(0,T) \times \QQ_h(0,T)$ be given 
and $(\hat r_{1,h},\hat r_{2,h})$ denote the residuals defined by \eqref{eq:weak1hp}--\eqref{eq:weak2hp}.
Then the estimate
\begin{align} \label{eq:disc_stability}
  \E_\alpha(\phi_h(t)|\hat \phi_h(t)) &+ \int_0^t   \D_{\phi_h(s)}(\mu_h(s)|\hat\mu_h(s)) \, ds \\  &\le e^{c(t)} \E_\alpha(\phi_h(0)|\hat \phi_h(0)) + C e^{c(t)}\int_0^t \|\hat r_{1,h}(s)\|^2_{-1,h} + \|\hat  r_{2,h}(s)\|^2_{1} \, ds \notag
\end{align}
holds for a.a $0 \le t \le T$ with parameter $c(t)=c_0 t+c_1 \int_0^t \|\dt \hat \phi_h\|_0 ds$ and $c_0,c_1,C$ depending on the uniform $L^\infty(H^1) \times L^2(H^1)$ bounds for $(\phi_h,\mu_h)$ and $(\hat \phi_h, \hat \mu_h)$, respectively, and  

\begin{align} \label{eq:dualh}
\|\hat r\|_{-1,h} = \sup_{v_h \in \Vh} \frac{(\hat r,v_h)}{\|v_h\|_{1}} \le \|r\|_{-1} 
\end{align}
denoting the \emph{discrete-dual norm}. 
\end{lemma}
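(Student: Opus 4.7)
The plan is to mirror the proof of Theorem~\ref{thm:main} essentially line by line, leveraging the conforming nature of the discretization. The key structural observation is that $\Vh \subset H^1_p(\Omega)$, so every functional-analytic tool invoked in the continuous argument---the Bregman-distance identity for $\ddt \E_\alpha$, Lemma~\ref{lem:equiv}, H\"older and Young inequalities, the Sobolev embedding $H^1 \hookrightarrow L^6$, and the Poincar\'e inequality---remains available verbatim for the discrete quantities. Moreover, by the very definition in \eqref{eq:weak1hp}--\eqref{eq:weak2hp}, the residuals $\hat r_{1,h}, \hat r_{2,h}$ are themselves elements of $\Vh$, so that any linear combination of the occurring discrete fields is an admissible discrete test function.

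Starting from the formal Bregman-type identity
\[
\ddt \E_\alpha(\phi_h|\hat \phi_h) = \la \E_\alpha'(\phi_h) - \E_\alpha'(\hat \phi_h), \dt \phi_h - \dt \hat \phi_h\ra + \la \E_\alpha'(\phi_h) - \E_\alpha'(\hat \phi_h) - \E_\alpha''(\hat \phi_h)(\phi_h - \hat \phi_h), \dt \hat \phi_h\ra,
\]
I would substitute via \eqref{eq:weak1h}--\eqref{eq:weak2h} and \eqref{eq:weak1hp}--\eqref{eq:weak2hp} using the discrete test functions $v_h = \mu_h - \hat \mu_h + \hat r_{2,h}$ and $w_h = \dt \phi_h - \dt \hat \phi_h$, all of which belong to $\Vh$. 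This reproduces precisely the five-term decomposition $(i)$--$(v)$ of the continuous proof, and each summand is bounded exactly as there. The only modification is that every dual-norm estimate on $\hat r_{1,h}$ must now be expressed through the discrete-dual norm $\|\cdot\|_{-1,h}$ of \eqref{eq:dualh}, since admissible test functions are now restricted to $\Vh$; the inequality $\|\hat r\|_{-1,h} \le \|\hat r\|_{-1}$ claimed there is a trivial consequence of $\Vh \subset H^1_p(\Omega)$.

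The step requiring slightly more care---the closest thing to a genuine obstacle---is the treatment of the mean-value term $|\la \mu_h - \hat \mu_h, 1\ra|$ that enters the bound on $(ii)$. In the continuous case this is handled by testing \eqref{eq:weak2} and \eqref{eq:weak2p} with $w = 1$, and the discrete analogue needs $1 \in \Vh$. This is guaranteed by our choice of continuous $P_2$ Lagrange elements, which contain all constants. Once this is noted, the subsequent polynomial estimate of $\|f'(\phi_h) - f'(\hat \phi_h)\|_{0,1}$ follows from the same H\"older and Sobolev embedding arguments as before, using the uniform $L^\infty(H^1)$ bounds assumed on $(\phi_h,\mu_h)$ and $(\hat \phi_h, \hat \mu_h)$.

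Combining the five estimates with $\delta = 1/6$ will yield a differential inequality for $\E_\alpha(\phi_h|\hat \phi_h)$ exactly of the form required by Gronwall's lemma \eqref{eq:gronwall}, whose time integration produces \eqref{eq:disc_stability}. Overall, the difficulty is not conceptual but purely a matter of bookkeeping: verifying that every test function used in the argument genuinely lies in $\Vh$, and that the discrete dual norm appears in the appropriate places.
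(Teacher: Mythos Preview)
Your proposal is correct and follows exactly the approach the paper intends: the paper's own proof consists of the single sentence that the assertion follows with the very same arguments as in Theorem~\ref{thm:main}, and your write-up is precisely the bookkeeping verification that those arguments go through in the conforming discrete setting. If anything, the semi-discrete case is slightly simpler because $\phi_h \in C^1([0,T];\Vh)$ by Lemma~\ref{lem:energy}, so the formal differentiation of $\E_\alpha(\phi_h|\hat\phi_h)$ is rigorous without the density/mollification argument needed on the continuous level.
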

\begin{proof}
The assertion follows with the very same arguments as used in the proof of Theorem~\ref{thm:main}; the details are left to the reader.
\end{proof}
Lemma~\ref{lem:semi} allows to investigate the stability of the semi-discrete solution $(\phi_h,\mu_h)$ with respect to perturbations in the initial conditions and problem data.
We will choose $(\hat \phi_h,\hat \mu_h)$ as a particular discrete approximation for the solution $(\phi,\mu)$ of \eqref{eq:ch1}--\eqref{eq:ch2}. 
This will allow us to derive quantitative error estimates for the semi-discrete approximation. 

\subsection{Auxiliary results} 

We start by introducing some projection operators and recall the corresponding error estimates.
Let $\pi_h^0 : H^1_p(\Omega) \to \Vh$ denote the $L^2$-orthogonal projection which can be be characterized by
\begin{align} \label{eq:l2proj}
    \la\pi_h^0 u - u, v_h\ra = 0 \qquad \forall v_h \in \Vh.
\end{align}
By definition, $\pi_h^0$ is a contraction in $L^2(\Omega)$ and on quasi-uniform meshes, $\pi_h^0$ is also stable with respect to the $H^1$-norm, i.e., $\|\pi_h^0 u\|_1 \le C(\sigma) \|u\|_1$ for all $u \in H^1_p(\Omega)$; see \cite{BrennerScott}. 
Moreover
\begin{align} \label{eq:l2projest}
    \|u - \pi_h^0 u\|_{s} \leq C h^{r-s} \|u\|_{r}
\end{align}
for all $-1 \le s \le r$ and $0 \le r \le 3$. 
In our analysis, we will also utilize the $H^1$-elliptic projection $\pi_h^1 : H^1_p(\Omega) \to \Vh$, which is characterized by the variational problem 
\begin{align} \label{eq:hatphih}
\la \nabla ( \pi_h^1 u - u), \nabla v_h\ra + \la \pi_h^1 u -  u,v_h\ra &= 0 \qquad \forall v_h \in \Vh.
\end{align}
By standard finite element error analysis and duality arguments, one can show that
\begin{align}\label{eq:h1porjest}
 \|u - \pi_h^1 u\|_{s} \leq C h^{r-s} \|u\|_r,
\end{align}
for all $-1 \le s \le r$ and $1 \le r \le 3$; see again \cite{BrennerScott} for details.
Since we assumed quasi-uniformity of the mesh $\Th$, we can further resort to the inverse inequalities
\begin{align} \label{eq:inverse}
    \|v_h\|_1 \le c_{inv} h^{-1} \|v_h\|_0 
    \qquad \text{and} \qquad 
    \|v_h\|_{0,p} \le c_{inv} h^{d/p-d/q} \|v_h\|_{0,q} 
\end{align}
which hold for all discrete functions $v_h \in \Vh$ and all $1 \le q \le p \le \infty$ of a quasi-uniform simplicial mesh in dimension $d$. 
By combining the previous estimates, one can see that 
\begin{align} \label{eq:uniform}
    \|\pi_h^1 u\|_{0,\infty} \le C \|u\|_{2}
\end{align} 
in dimension $d \le 3$. 
Let us note that all  estimates also hold in dimension one, i.e., for piecewise polynomial approximations in time. 

\subsection{Projection error estimates}

Let $(\phi,\mu)$ be a periodic weak solution of \eqref{eq:ch1}--\eqref{eq:ch2}.
We then define $\hat \phi_h(t) = \pi_h^1 \phi(t) \in \Vh$, $0 \le t \le T$, as the $H^1$-elliptic projection, and $\hat \mu_h(t) \in \Vh$ by solving the elliptic variational problems
\begin{align} \label{eq:hatmuh}
\la\hat \mu_h(t) - \mu(t),w_h\ra - \gamma \la \nabla \hat \phi_h(t) - \nabla \phi(t), \nabla w_h\ra - \la f'(\hat \phi_h(t)) - f'(\phi(t)), w_h\ra = 0
\end{align}
for all $w_h \in \Vh$ and $0 \le t \le T$. Since this problem is linear in $\hat \mu_h(t)$ and finite-dimensional, existence of a unique solution follows immediately, e.g. by the Lax-Milgram lemma. 
For this choice of approximations $(\hat \phi_h, \hat \mu_h)$, we have the following error estimates. 
\begin{lemma} \label{lem:projerr}
Let (A1)--(A3) hold, $(\phi, \mu)$ be a periodic weak solution of \eqref{eq:ch1}--\eqref{eq:ch2} with regular initial value $\phi(0) \in H^3_p(\Omega)$, and let  $\hat \phi_h$, $\hat \mu_h$ be defined as above. 
Then 
\begin{align*}
\|\phi(t) - \hat \phi_h(t)\|_1 &\leq Ch^2 \|\phi(t)\|_3, \\
\|\dt\phi(t) - \dt\hat \phi_h(t)\|_{-1,h} &\leq Ch^2\|\dt\phi(t)\|_1, \\
\|\mu(t) - \hat \mu_h(t)\|_1 &\leq C' h^2(\|\mu(t)\|_3 + \|\phi(t)\|_3),
\end{align*}
for a.a. $0 \le t \le T$ with constants $C=C(\Omega,\sigma)$ and $C'=C'(\Omega,\sigma,\gamma,f_2^{(2)}, f_3^{(2)},C_{T}(\|\phi_0\|_3))$.
\end{lemma}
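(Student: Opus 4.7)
My plan is to dispatch the three inequalities in order, with the third requiring the main work. The first is immediate from \eqref{eq:h1porjest} applied with $s=1$ and $r=3$, since $\hat\phi_h(t) = \pi_h^1 \phi(t)$ by construction.

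For the second, I would exploit that $\pi_h^1$ is a bounded linear operator independent of $t$, and hence commutes with the time derivative: $\dt\hat\phi_h(t) = \pi_h^1 \dt\phi(t)$. The estimate \eqref{eq:h1porjest} applied with $s=-1$, $r=1$ then gives $\|\dt\phi - \pi_h^1\dt\phi\|_{-1} \le C h^2\|\dt\phi\|_1$, and the claim follows from the trivial domination \eqref{eq:dualh} of the discrete dual norm by the continuous one.

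The third bound is the heart of the matter. My idea is to exploit the orthogonality \eqref{eq:hatphih} defining $\pi_h^1$ in order to recast $\hat\mu_h$ as a pure $L^2$-projection, after which the estimate reduces to standard approximation and stability results. Subtracting equation \eqref{eq:weak2} tested against $w_h\in\Vh$ from the defining identity \eqref{eq:hatmuh} produces the Galerkin-type relation
\begin{align*}
\la \mu - \hat\mu_h, w_h\ra = \gamma\la\nabla(\phi - \hat\phi_h), \nabla w_h\ra + \la f'(\phi) - f'(\hat\phi_h), w_h\ra,
\end{align*}
and using \eqref{eq:hatphih} with $u=\phi$, the first term on the right collapses to $\gamma\la\hat\phi_h-\phi, w_h\ra$. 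Since $\hat\mu_h \in \Vh$, this identifies
\begin{align*}
\hat\mu_h = \pi_h^0\bigl[\mu + \gamma(\phi - \hat\phi_h) + f'(\hat\phi_h) - f'(\phi)\bigr].
\end{align*}
Splitting $\mu - \hat\mu_h$ accordingly and invoking the $H^1$-stability of $\pi_h^0$ on the quasi-uniform mesh, the proof reduces to bounding $\|\mu - \pi_h^0\mu\|_1$ via \eqref{eq:l2projest}, $\|\phi - \hat\phi_h\|_1$ via the first part, and the nonlinear $H^1$-increment $\|f'(\phi) - f'(\hat\phi_h)\|_1$.

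The main technical step is this last ingredient. Here I would apply the chain rule $\nabla[f'(\phi) - f'(\hat\phi_h)] = f''(\phi)\nabla(\phi - \hat\phi_h) + (f''(\phi) - f''(\hat\phi_h))\nabla\hat\phi_h$, the polynomial growth bounds for $f''$ and $f'''$ from assumption (A3), H\"older's inequality, and the Sobolev embedding $H^1\hookrightarrow L^6$ valid in dimension $d\le 3$. The uniform $L^\infty$-bound for $\phi,\hat\phi_h$ and the uniform $W^{1,6}$-bound for $\hat\phi_h$ that enter the constants follow from the continuity property \eqref{eq:continuous}, the pointwise estimate \eqref{eq:uniform}, and \eqref{eq:h1porjest} applied with $s=2$, $r=3$, yielding $\|f'(\phi) - f'(\hat\phi_h)\|_1 \le Ch^2\|\phi\|_3$ and closing the argument.
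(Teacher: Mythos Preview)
Your treatment of the first two estimates is essentially identical to the paper's. For the third estimate, your argument is correct in spirit but follows a different route than the paper, and one step needs a cleaner justification.

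The paper proceeds by splitting $\|\mu-\hat\mu_h\|_1 \le \|\hat\mu_h-\pi_h^0\mu\|_1 + \|\pi_h^0\mu-\mu\|_1$, applying the inverse inequality $\|\hat\mu_h-\pi_h^0\mu\|_1 \le C h^{-1}\|\hat\mu_h-\pi_h^0\mu\|_0$, and then estimating the $L^2$-norm via the same variational identity you derived. This leads to $\|\hat\mu_h-\pi_h^0\mu\|_0 \le C\|\phi-\hat\phi_h\|_0 \le C h^3\|\phi\|_3$, so the lost factor $h^{-1}$ is recovered. The advantage is that only $\|f'(\phi)-f'(\hat\phi_h)\|_0$ is needed, which follows immediately from the $L^\infty$-bounds on $\phi,\hat\phi_h$ and the growth of $f''$; no chain rule or gradient bounds on $\hat\phi_h$ enter. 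Your route via $H^1$-stability of $\pi_h^0$ avoids the inverse inequality but trades it for having to control $\|f'(\phi)-f'(\hat\phi_h)\|_1$, which is more technical.

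That said, your argument has a small gap: you invoke \eqref{eq:h1porjest} with $s=2$, $r=3$ to obtain a uniform $W^{1,6}$-bound on $\hat\phi_h$, but $\hat\phi_h=\pi_h^1\phi\in\Vh\subset H^1_p(\Omega)$ is piecewise quadratic and in general not in $H^2(\Omega)$, so the estimate $\|\phi-\hat\phi_h\|_2\le Ch\|\phi\|_3$ is not available. The bound $\|\nabla\hat\phi_h\|_{0,6}\le C$ you need \emph{does} hold, but it should be obtained by writing $\|\nabla\hat\phi_h\|_{0,6}\le \|\nabla(\hat\phi_h-I_h\phi)\|_{0,6}+\|\nabla I_h\phi\|_{0,6}$ for a suitable interpolant $I_h$, applying the inverse inequality \eqref{eq:inverse} to the first discrete piece, and using that $\phi\in H^3_p\hookrightarrow W^{1,\infty}$. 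With this fix your argument goes through; the paper's route is simply a bit more economical.
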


\begin{proof}
The estimates for $\phi-\hat\phi_h$ and $\dt\phi-\dt\hat\phi_h$ follow directly from \eqref{eq:h1porjest}.
We then use  the triangle inequality to split the error in the chemical potential into
\begin{align*}
\|\hat \mu_h - \mu\|_1 \le \|\hat \mu_h - \pi_h^0 \mu\|_1 + \|\pi_h^0 \mu - \mu\|_1.
\end{align*}
With the help of \eqref{eq:l2projest}, the last term can be estimated by $\|\pi_h^0 \mu - \mu\|_1 \le C h^2 \|\mu\|_3$. 
Using the first of the inverse inequalities \eqref{eq:inverse}, the discrete error component can be bounded by
\begin{align*}
\|\hat \mu_h - \pi_h^0 \mu\|_1 
\le C_\sigma h^{-1} \|\hat \mu_h - \pi_h^0 \mu\|_0 ,
\end{align*}
and for the  error in the $L^2$-norm, we can deduce from \eqref{eq:l2proj} that 
\begin{align*}
\|\hat \mu_h - \pi_h^0  \mu\|_0^2 
&= (\hat \mu_h - \pi_h^0  \mu, \hat \mu_h - \pi_h^0 \mu) 
=  (\hat \mu_h - \mu, \hat \mu_h - \pi_h^0  \mu), 
\end{align*}
since $w_h = \hat \mu_h - \pi_h^0 \mu \in \Vh$. 
We can then use \eqref{eq:hatmuh} with this test function $w_h$, to see that 
\begin{align*}
(\hat \mu_h - \hat \mu, w_h)
&= \gamma (\nabla (\hat \phi_h - \phi), \nabla w_h) + (f'(\hat \phi_h)-f'(\phi),w_h) \\
&= \gamma (\phi - \hat \phi_h, w_h) + (f'(\hat \phi_h)-f'(\phi),w_h),
\end{align*}
where we used the particular choice of $\hat \phi_h = \pi_h^1 \phi$ and 
\eqref{eq:hatphih}, to replace the gradient term in the second step. 
Proceeding with standard arguments, we then obtain
\begin{align*}
(\hat \mu_h - \hat \mu, w_h)
&\le \gamma \|\phi - \hat \phi_h\|_0 \| w_h\|_0 + \|f'(\hat \phi_h)-f'(\phi)\|_0 \|w_h\|_0 \\
&\le C(f_2^{(2)}, f_3^{(2)},C_{T},\Omega)\|\hat \phi_h - \phi\|_0 \|w_h\|_0.
\end{align*}
To estimate the nonlinear term, we here used the mean value theorem and the polynomial bounds for $f''$ as well as $\|\phi\|_{0,\infty} + \|\hat \phi_h\|_{0,\infty} \le C \|\phi\|_2$.
In summary, we thus obtain 
\begin{align*}
\|\hat \mu_h - \mu\|_1 \le C h^2 (\| \mu\|_3 + \|\phi\|_3),
\end{align*}
with constant $C$ independent of the mesh size and uniform for all $0 \le t \le T$. 
\end{proof}

\subsection{Error estimates}

Using that $(\phi,\mu)$ solves \eqref{eq:weak1}--\eqref{eq:weak2} and the definition of $(\hat \phi_h,\hat \mu_h)$, one can see that  \eqref{eq:weak1hp}--\eqref{eq:weak2hp} is satisfied with residuals $\hat r_{2,h}=0$ and
\begin{align} \label{eq:r1h}
\langle \hat r_{1,h},v_h\rangle 
&= \la\dt \hat \phi_h - \dt \phi, v_h\ra
  + \la b(\phi_h) \nabla (\hat \mu_h -\mu), \nabla v_h\ra 
  + \la (b(\phi_h) - b(\phi)) \nabla \mu, \nabla v_h\ra.
\end{align}
By the properties of the discrete dual norm $\norm{\cdot}_{-1,h}$ and standard approximation error estimates, see Lemma \ref{lem:projerr}, the residual $\hat r_{1,h}$ can further be bounded by 
\begin{align*}
\|r_{1,h}\|_{-1,h}^2
&\le C\|\dt \hat \phi_h - \dt \phi\|_{-1,h}^2
+ C(b_2) \|\nabla \hat \mu_h - \nabla \mu\|_0^2 \\
&\qquad 
+ C(b_3)  \|\nabla \mu\|_{0,3}^2 (\|\hat \phi_h - \phi\|_{0,6}^2 + \|\phi_h - \hat \phi_h\|_{0,6}^2 ) \\
&\le C h^4 (\|\dt \phi\|_{1}^2 + \|\mu\|_{3}^2 + (1+ \|\mu\|_{1,3}^2)\|\phi\|_3^2 ) +  C'\|\mu\|_{1,3}^2 \, \E_\alpha(\phi_h|\hat \phi_h),
\end{align*}
with appropriate constants $C$, $C'$ depending only on bounds on the coefficients, the domain $\Omega$, the mesh regularity, and the constant $C_{T}(\|\phi_0\|_3)$ for the solution in Lemma \ref{lem:weak}.
We can now utilize Lemma~\ref{lem:semi} to obtain the following bounds for the discrete error.
\begin{lemma}
Let (A1)--(A4) hold and $(\phi,\mu)$ be a regular periodic weak solution with initial value $\phi_0\in H_p^3(\Omega)$. Furthermore, let $(\hat \phi_h, \hat \mu_h)$ be the discrete approximations from above and let $(\phi_h,\mu_h)$ be the solution of Problem~\ref{prob:semi} with initial value $\phi_{0,h}=\pi_h^1 \phi_0$. 
Then 
\begin{align*}
\|\phi_h - \hat \phi_h&\|_{L^\infty(H^1_p)}^2 + \|\nabla \mu_h - \nabla \hat \mu_h\|^2_{L^2(L^2)} \le C'_{T}(\|\phi_0\|_3) h^4,
\end{align*}
with constant $C'_T(\|\phi_0\|_3)$ independent of the meshsize $h$.
\end{lemma}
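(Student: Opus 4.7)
The plan is to apply the semi-discrete stability estimate of Lemma~\ref{lem:semi} to the pair $(\phi_h,\mu_h)$ (the exact discrete solution) and $(\hat\phi_h,\hat\mu_h)$ (the projections defined by \eqref{eq:hatphih}--\eqref{eq:hatmuh}). By the very construction of $\hat\mu_h$ via \eqref{eq:hatmuh}, the second residual vanishes identically, $\hat r_{2,h}\equiv 0$. The first residual $\hat r_{1,h}$ is already identified in \eqref{eq:r1h}, and the bound on $\|\hat r_{1,h}\|_{-1,h}^2$ derived just before the lemma gives, up to an $O(h^4)$ projection error term, a remainder of the form $C'\|\mu\|_{1,3}^2\,\E_\alpha(\phi_h|\hat\phi_h)$. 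The key observation is that this remainder has exactly the structure required to be absorbed via Gronwall's inequality into the exponential factor already present in \eqref{eq:disc_stability}.

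Concretely, I would proceed as follows. First, the choice $\phi_{0,h}=\pi_h^1\phi_0=\hat\phi_h(0)$ gives $\E_\alpha(\phi_h(0)|\hat\phi_h(0))=0$, so the initial term in \eqref{eq:disc_stability} disappears. Next, I would insert the residual estimate for $\|\hat r_{1,h}\|_{-1,h}^2$ into \eqref{eq:disc_stability} and move the term proportional to $\E_\alpha(\phi_h|\hat\phi_h)$ to the left; since this term is multiplied by $\|\mu\|_{1,3}^2$, which by Lemma~\ref{lem:weak} with $k=3$ is integrable in time (as $\mu\in L^2(H_p^3)\hookrightarrow L^2(W_p^{1,3})$ in dimension $d\le 3$), a second application of Gronwall's lemma simply augments the exponential prefactor. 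Using the regularity bounds from Lemma~\ref{lem:weak} for $\phi_0\in H_p^3(\Omega)$, namely $\phi\in L^\infty(H_p^3)$, $\mu\in L^2(H_p^3)$, $\dt\phi\in L^2(H_p^1)$, the $h^4$-coefficient
\[
\int_0^t h^4\bigl(\|\dt\phi\|_1^2+\|\mu\|_3^2+(1+\|\mu\|_{1,3}^2)\|\phi\|_3^2\bigr)\,ds
\]
is bounded by $C_T(\|\phi_0\|_3)\,h^4$.

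It remains to verify that the constants $c_0,c_1,C$ in Lemma~\ref{lem:semi} stay uniformly bounded in $h$. The $L^\infty(H_p^1)$ bound for $\phi_h$ follows from the discrete energy dissipation of Lemma~\ref{lem:energy} together with $\|\phi_{0,h}\|_1\le C\|\phi_0\|_1$ by $H^1$-stability of $\pi_h^1$, while the corresponding bound for $\hat\phi_h=\pi_h^1\phi$ follows again from $H^1$-stability and the regularity of $\phi$. The integral $\int_0^t\|\dt\hat\phi_h\|_0\,ds$ is controlled by $\int_0^t\|\dt\phi\|_0\,ds$ via $\dt\hat\phi_h=\pi_h^1\dt\phi$ and the $L^2$-contractivity of $\pi_h^1$, and is finite by Lemma~\ref{lem:weak}. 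Finally, the lower bounds \eqref{eq:lower_bound_rel} and \eqref{eq:lower_bound_rel_dis} convert the left-hand side of \eqref{eq:disc_stability} into $\|\phi_h-\hat\phi_h\|_{L^\infty(H^1_p)}^2+\|\nabla(\mu_h-\hat\mu_h)\|_{L^2(L^2)}^2$, yielding the claim. The only mildly subtle step is the Gronwall-absorption: one must verify that $t\mapsto\|\mu(t)\|_{1,3}^2$ is genuinely in $L^1(0,T)$, which is ensured precisely by the higher-regularity statement of Lemma~\ref{lem:weak} under the assumption $\phi_0\in H_p^3$ (and the smallness of $T$ in $d=3$).
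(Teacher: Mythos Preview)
Your proof is correct and follows essentially the same approach as the paper: apply Lemma~\ref{lem:semi}, note that $\hat r_{2,h}=0$ and $\E_\alpha(\phi_h(0)|\hat\phi_h(0))=0$, insert the residual bound for $\hat r_{1,h}$, and absorb the $\|\mu\|_{1,3}^2\,\E_\alpha(\phi_h|\hat\phi_h)$ term via a second Gronwall, using $\mu\in L^2(W^{1,3}_p)$ from Lemma~\ref{lem:weak}. One minor slip: $\pi_h^1$ is the $H^1$-elliptic projection and is not an $L^2$-contraction, but the needed bound $\int_0^t\|\dt\hat\phi_h\|_0\,ds\le\int_0^t\|\dt\phi\|_1\,ds$ follows from its $H^1$-stability together with $\dt\phi\in L^2(H^1_p)$.
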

\begin{proof}
From the discrete stability estimate of Lemma~\ref{lem:semi} and the bounds for the residual derived above, we may deduce that 
\begin{align*}
&\E_\alpha(\phi_h(t)|\hat \phi_h(t)) + \int_0^t \D_{\phi_h(s)}(\mu_h(s)|\hat\mu_h(s)) ds 
 \le C  \E_\alpha(\phi_h(0)|\hat \phi_h(0)) + C' \int_0^t \|\hat r_{1,h}(s)\|^2_{-1,h} ds \\
 & \le C'' h^4 \int_0^t \norm*{\dt\phi}_1^2 + \norm*{\mu}_3^2 + (1+ \norm{\mu}^2_{1,3})\norm*{\phi}_3^2 ds + C'''\int_0^t \|\mu\|_{1,3}^2 \E_\alpha(\phi_h(s) | \hat \phi_h(s)) ds.
\end{align*}
Since we assumed $\mu \in L^2(W^{1,3})$, the last term can be eliminated via the Gronwall inequality \eqref{eq:gronwall}, which we here employ with the choices
$u(t) = \E_\alpha(\phi_h(t)|\hat \phi_h(t))$, 
$\beta(t) = C''' \|\mu(t)\|^2_{1,3}$,
and
$\alpha(t)=C \E_\alpha(\phi_h(0)|\hat \phi_h(0)) - \int_0^t \D_{\phi_h(s)}(\mu_h(s)|\hat \mu_h(s)) \, ds + C'' h^4 \int_0^t \norm*{\dt\phi}_1^2 + \norm*{\mu}_3^2 + (1+ \norm{\mu}^2_{1,3})\norm*{\phi}_3^2 ds$. 
The assertion then follows by using the lower bounds \eqref{eq:lower_bound_rel} and \eqref{eq:lower_bound_rel_dis} for the relative energy and dissipation functionals.
\end{proof}

By combination of the previous estimates we now immediately obtain the following error bounds for the Galerkin semi-discretization with quadratic finite elements. 
\begin{theorem} \label{thm:main2}
Let (A1)--(A4) hold and let $(\phi,\mu)$ denote the unique periodic weak solution of \eqref{eq:ch1}--\eqref{eq:ch2} with regular initial value $\phi(0)=\phi_0 \in H^3_p(\Omega)$. Moreover, let $(\phi_h,\mu_h)$ be the corresponding semi-discrete solution of Problem~\ref{prob:semi} with initial value $\phi_{h,0} = \pi_h^1 \phi_0$. Then 
\begin{align*}
&\| \phi - \phi_h\|^2_{L^\infty(H^1_p)} + \| \mu - \mu_h\|^2_{L^2(H^1_p)}  \le C'_{T}(\|\phi_0\|_3) h^4    
\end{align*}
with a constant $C'_T(\|\phi_0\|_3)$ independent of the meshsize $h$. 
\end{theorem}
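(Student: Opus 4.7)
The strategy is a triangle-inequality splitting
$\phi-\phi_h=(\phi-\hat\phi_h)+(\hat\phi_h-\phi_h)$ and analogously for $\mu$, combined with the three ingredients already prepared in this section: the projection error estimates of Lemma~\ref{lem:projerr}, the discrete stability/residual estimate of the preceding lemma, and the higher regularity bounds of Lemma~\ref{lem:weak} for $k=3$.

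First, since $\phi_0\in H^3_p(\Omega)$ and (under the smallness assumption on $T$ in $d=3$) Lemma~\ref{lem:weak} yields $\phi\in L^\infty(H^3_p)\cap H^1(H^1_p)$ and $\mu\in L^2(H^3_p)$, Lemma~\ref{lem:projerr} delivers the projection bounds
\[
\|\phi-\hat\phi_h\|_{L^\infty(H^1_p)}^2+\|\mu-\hat\mu_h\|_{L^2(H^1_p)}^2\le C\,h^4,
\]
with the constant depending only on $\|\phi_0\|_3$ through $C_T(\|\phi_0\|_3)$. Second, the discrete initial data were chosen as $\phi_{0,h}=\pi_h^1\phi_0=\hat\phi_h(0)=\phi_h(0)$, so the initial relative energy vanishes, and the preceding lemma (relying on the residual identity \eqref{eq:r1h}, the residual bound derived right after it, and Lemma~\ref{lem:semi}) gives
\[
\|\phi_h-\hat\phi_h\|_{L^\infty(H^1_p)}^2+\|\nabla(\mu_h-\hat\mu_h)\|_{L^2(L^2)}^2\le C'_T(\|\phi_0\|_3)\,h^4.
\]
Adding the two bounds and applying the triangle inequality yields the desired $O(h^2)$ estimate for $\phi-\phi_h$ in $L^\infty(H^1_p)$ and for $\nabla(\mu-\mu_h)$ in $L^2(L^2)$.

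The remaining issue — which I expect to be the only nontrivial point — is upgrading the gradient control on $\mu-\mu_h$ to the full $H^1_p$ norm, since the dissipation functional only controls $\|b^{1/2}(\phi_h)\nabla(\mu_h-\hat\mu_h)\|_0$. This is done exactly as in the treatment of term $(ii)$ in the proof of Theorem~\ref{thm:main}: the constant $1$ lies in $\Vh$ by periodicity of the mesh, so testing \eqref{eq:weak2} with $w=1$ and \eqref{eq:weak2h} with $w_h=1$ gives
\[
\langle\mu-\mu_h,1\rangle=\langle f'(\phi)-f'(\phi_h),1\rangle.
\]
Using the mean value theorem, the polynomial growth of $f''$ from (A3), H\"older's inequality, and the uniform $L^\infty(H^1_p)$ bounds on $\phi$ and $\phi_h$ (the latter provided by Lemma~\ref{lem:energy}), this is bounded by $C\|\phi-\phi_h\|_1$, hence by $C\,h^2$ in $L^\infty(0,T)$. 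A Poincar\'e inequality on mean-zero functions, combined with the gradient bound above, then produces the full $L^2(H^1_p)$ control on $\mu-\mu_h$ at rate $h^2$, and squaring gives the claimed $h^4$ bound. Absorbing all constants into $C'_T(\|\phi_0\|_3)$ completes the proof.
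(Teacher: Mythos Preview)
Your proof is correct and follows essentially the same approach as the paper, which simply states that the result follows ``by combination of the previous estimates'' (i.e., Lemma~\ref{lem:projerr} plus the discrete error lemma immediately preceding the theorem). You are in fact more explicit than the paper about the upgrade from $\|\nabla(\mu-\mu_h)\|_{L^2(L^2)}$ to the full $\|\mu-\mu_h\|_{L^2(H^1_p)}$ norm; the paper only gestures at this step in the remark following Theorem~\ref{thm:main}, whereas you correctly spell out the mean-value argument via testing with $w=w_h=1$ and the Poincar\'e inequality.
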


\begin{remark}
Let us note that the convergence rates in the theorem are optimal with respect to the approximation properties of quadratic finite elements. Moreover, the regularity assumption on the initial value is already necessary for the predicted convergence rates. The convergence result therefore is \emph{order optimal} and \emph{sharp}, i.e., obtained under minimal smoothness assumptions on the problem data.
\end{remark}

\section{Fully discrete approximation} \label{sec:5}

We now turn to the time discretization, for which we again employ a variational method. 
For a given step size $\tau=T/N$, $N \in \NN$, we define discrete time points $t^n:=n\tau$ and denote by $\Itau:=\{0=t^0,t^1,\ldots,t^N=T\}$ the corresponding partition of the time interval $[0,T]$. 
We write $\Pi_N(\Itau;\Vh)$ for the space of piecewise polynomials of degree $k$ over the time grid $\Itau$ with values in $\Vh$, and denote by $\Pi_N^c(\Itau;\Vh) = \Pi_N(\Itau;\Vh) \cap C(0,T;\Vh)$ the corresponding sub-space of continuous functions. 
Furthermore, we use a bar symbol $\bar g$ to denote piecewise constant functions of time. 

We are going to search for approximations $\phi_{h,\tau}$, $\bmu_{h,\tau}$ for $(\phi,\mu)$ in the spaces 
\begin{align*}
\WW_{h,\tau}(0,T):= \Pi_1^c(\Itau;\Vh) 
\quad \text{and} \quad 
\QQ_{h,\tau}(0,T):=\Pi_0(\Itau;\Vh).
\end{align*}
Let us emphasize that functions in $\WW_{h,\tau}(0,T)$ are continuous in time and piecewise linear, while functions $\bar q_{h,\tau} \in \QQ_{h,\tau}$ are piecewise constant in time, which is designated by the bar symbol. 
The fully discrete approximation for \eqref{eq:ch1}--\eqref{eq:ch2} then reads as follows.
\begin{problem} \label{prob:full}
Let $\phi_{0,h}\in \Vh$ be given. Find $\phi_{h,\tau}\in \WW_{h,\tau}(0,T)$, $\bmu_{h,\tau} \in \QQ_{h,\tau}(0,T)$ such that $\phi_{h,\tau}(0)=\phi_{0,h}$ and for all test functions $\bar v_{h,\tau}, \bar w_{h,\tau} \in \QQ_{h,\tau}$ and $n \ge 1$, there holds
\begin{align} 
 \int_{t^{n-1}}^{t^{n}} \la\dt \phi_{h,\tau}, \bar v_{h,\tau}\ra + \la b(\phi_{h,\tau}) \nabla \bmu_{h,\tau}, \nabla  \bar v_{h,\tau}\ra ds &= 0, \label{eq:pg1}
\\
 \int_{t^{n-1}}^{t^{n}} \la \bmu_{h,\tau}, \bar w_{h,\tau}\ra - \la\gamma \nabla \phi_{h,\tau}, \nabla  \bar w_{h,\tau}\ra - \la f'(\phi_{h,\tau}),\bar w_{h,\tau}\ra ds &= 0. \label{eq:pg2}
\end{align}
\end{problem}

\medskip 

\begin{remark}
By the discontinuity of the test functions $\bar v_{h,\tau}$, $\bar w_{h,\tau}$ in time, the fully discrete method amounts to an implicit time-stepping scheme, similar to the Crank-Nicolson or average vector field methods; see \cite{Akrivis11} and \cite{TierraGuillenGonzales15} for details. 
\end{remark}

Before we proceed, let us briefly discuss the well-posedness of the fully discrete scheme.
\begin{lemma}
Let (A1)--(A4) hold. 
Then for any $\phi_{0,h} \in \Vh$ and any $\tau>0$, Problem \ref{prob:full} has at least one solution.
Moreover, any solution $(\phi_{h,\tau},\bmu_{h,\tau})$ of \eqref{eq:pg1}--\eqref{eq:pg2} satisfies identities $\int_\Omega \phi_{h,\tau}(t^n) dx = \int_\Omega \phi_{0,h} dx$ and $\E(\phi_{h,\tau}(t^n)) + \int_0^{t^n} \D_{\phi_{h,\tau}}(\bmu_{h,\tau}) ds = \E(\phi_{0,h}) $ for all $0 \le t^n \le T$,
and as a direct consequence, we obtain uniform bounds 
\begin{align}
\|\phi_{h,\tau}\|_{L^\infty(H^1)} + \|\bmu_{h,\tau}\|_{L^2(H^1)} \le C(\|\phi_{0,h}\|_1).
\end{align}
\end{lemma}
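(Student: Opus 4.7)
The plan is to exploit the discontinuity of the test functions in time, which decouples \eqref{eq:pg1}--\eqref{eq:pg2} into a sequence of nonlinear problems on the successive subintervals $[t^{n-1},t^n]$, and then to run the same test-function manipulations that underlie Lemma~\ref{lem:energy}. On the $n$-th subinterval, $\phi_{h,\tau}(s) = \phi_h^{n-1} + \tau^{-1}(s-t^{n-1})(\phi_h^n-\phi_h^{n-1})$ with $\phi_h^{n-1}$ known, while $\bar\mu_{h,\tau}(s)\equiv\bar\mu_h^n$. The time integrals in \eqref{eq:pg1}--\eqref{eq:pg2} therefore collapse to weighted averages of $b(\cdot)$ and $f'(\cdot)$ along the linear segment, leaving a nonlinear system $F_n(\phi_h^n,\bar\mu_h^n)=0$ for a continuous map $F_n:\Vh\times\Vh\to\Vh\times\Vh$.

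For existence at each step I would invoke the standard corollary of Brouwer's fixed-point theorem: a continuous self-map of a finite-dimensional space that pairs non-negatively with the identity on a sufficiently large sphere must vanish inside it. The crucial test choice is $(\bar v_{h,\tau},\bar w_{h,\tau})=(\bar\mu_h^n,\tau^{-1}(\phi_h^n-\phi_h^{n-1}))$, which is admissible because $\dt\phi_{h,\tau}$ is piecewise constant in time with values in $\Vh$ and hence lies in $\QQ_{h,\tau}$. Subtracting the two paired identities cancels the $\la\bar\mu_h^n,\dt\phi_{h,\tau}\ra$ cross-terms and produces the nonnegative dissipation $\int_{t^{n-1}}^{t^n}\D_{\phi_{h,\tau}}(\bar\mu_{h,\tau})\,ds$ plus the energy increment $\E(\phi_h^n)-\E(\phi_h^{n-1})$; combined with the coercivity of $\E$ implied by (A3) this yields the a priori estimate needed to close the Brouwer argument.

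Mass conservation follows by testing \eqref{eq:pg1} with $\bar v_{h,\tau}\equiv 1$ on $[t^{n-1},t^n]$, since the divergence term vanishes identically. For the energy identity I would use the very same test pair as above: the chain rule turns the gradient contribution into $\tfrac{\gamma}{2}(\|\nabla\phi_h^n\|_0^2-\|\nabla\phi_h^{n-1}\|_0^2)$, and the potential contribution $\int_{t^{n-1}}^{t^n}\la f'(\phi_{h,\tau}),\dt\phi_{h,\tau}\ra\,ds$ reduces \emph{exactly} to $\int_\Omega f(\phi_h^n)-f(\phi_h^{n-1})\,dx$ precisely because $\phi_{h,\tau}$ is affine in $t$ on each subinterval (this is the discrete-gradient identity on which average-vector-field methods rely). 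Summation over $n$ produces the claimed energy balance. The uniform bounds then follow mechanically: the energy identity together with the lower bound $f\ge-f_1$ and mass conservation controls $\|\phi_{h,\tau}\|_{L^\infty(H^1)}$; the dissipation together with $b\ge b_1>0$ controls $\|\nabla\bar\mu_{h,\tau}\|_{L^2(L^2)}$; and the spatial mean of $\bar\mu_{h,\tau}$ is bounded by testing \eqref{eq:pg2} with $\bar w_{h,\tau}\equiv 1$ and estimating $f'(\phi_{h,\tau})$ in $L^1$ through the growth bounds of (A3), exactly as in the treatment of the term $(ii)$ in the proof of Theorem~\ref{thm:main}.

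The main obstacle is the existence step. While the test pairing that produces the energy identity is also the natural pairing for the Brouwer coercivity estimate, one must verify that the nonnegative dissipation plus the coercive part of $\E(\phi_h^n)$ genuinely dominates the contributions coming from $\phi_h^{n-1}$ and from the averaged nonlinearities; here one uses that $\E$ is bounded below in $\|\phi_h^n\|_1$ thanks to (A3) and that the averages of $b,f'$ over the linear segment are controlled polynomially in the endpoint values via (A2)--(A3). Once existence of $(\phi_h^n,\bar\mu_h^n)$ is secured, the rest of the proof is a discrete rewriting of the continuous energy estimate and, in the spirit of the rest of the paper, transfers almost verbatim.
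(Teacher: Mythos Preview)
Your proposal is correct and follows essentially the same route as the paper: induction over time steps, existence at each step via a Brouwer-type argument fed by the a priori energy bound, mass conservation from testing with $(\bar v_{h,\tau},\bar w_{h,\tau})=(1,0)$, and the exact energy-dissipation identity from the test pair $(\bar v_{h,\tau},\bar w_{h,\tau})=(\bar\mu_{h,\tau},\dt\phi_{h,\tau})$. Your write-up simply spells out in more detail what the paper's proof leaves implicit, including the discrete-gradient observation that makes the potential contribution exact and the Poincar\'e-type control of the mean of $\bar\mu_{h,\tau}$ via \eqref{eq:pg2}.
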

\begin{proof}
Conservation of mass and dissipation of energy follow again by testing the variational identities \eqref{eq:pg1}--\eqref{eq:pg2}, now with $(\bar v_{h,\tau}, \bar w_{h,\tau})=(1,0)$ and $(\bar v_{h,\tau}, \bar w_{h,\tau})=(\bar\mu_{h,\tau},  \dt \phi_{h,\tau})$, which are admissible test functions 
in \eqref{eq:pg1}--\eqref{eq:pg2}.
To show existence, we use an induction argument. Let $\phi_{h,\tau}(t^{n-1})$ be given. 
Then in the $n$th time step, only the function values $\phi_{h}^n:=\phi_{h,\tau}(t^{n})$ and $\mu_h^{n-1/2}:=\bmu_{h,\tau}(t^n-\tau/2) \in \Vh$ need to be determined. 
From the discrete energy-dissipation identity, the bounds for the coefficients, and the equivalence of norms on finite dimensional spaces, one can deduce that potential solutions are necessarily bounded. Existence of a solution for the $n$th time step then follows from Brouwer's fixed-point theorem. 
The uniform bounds for the solution, finally, follow directly from the energy-dissipation identity and using \eqref{eq:lower_bound_rel} and \eqref{eq:lower_bound_rel_dis}.
\end{proof}

\begin{remark}
The uniqueness of the discrete solution can be shown under an appropriate restriction $\tau \le \tau_0(h)$ on the time step size. 
In Section~\ref{sec:unique_full} below, we will show that uniqueness holds for $\tau \le c h^{\alpha}$ with some $\alpha \le 1$, if the solution $(\phi,\mu)$ is sufficiently regular. The choice $\tau = c h$, which seems reasonable in view of the convergence rate estimates of Theorem~\ref{thm:fulldisk}, therefore will lead to unique solutions for the fully discrete problem. 
\end{remark}

In the following, we first establish a discrete analogue of the stability estimate derive in Theorem~\ref{thm:main}, and then derive convergence rates for the fully-discrete scheme.

\subsection{Discrete stability estimate}

For any pair $(\hat \phi_{h,\tau},\hbmu_{h,\tau}) \in \WW_{h,\tau}(0,T)\times \QQ_{h,\tau}(0,T)$, we define discrete residuals $(\bar r_{1,h,\tau},\bar r_{2,h,\tau})\in \QQ_{h,\tau}(0,T)\times \QQ_{h,\tau}(0,T)$ via
\begin{align} 
 \int_{t^{n-1}}^{t^{n}} \la\dt \hat\phi_{h,\tau}, \bar v_{h,\tau}\ra + \la b(\phi_{h,\tau}) \nabla \hbmu_{h,\tau}, \nabla  \bar v_{h,\tau}\ra ds &=: \la \bar r_{1,h,\tau},\bar v_{h,\tau}\ra, \label{eq:discpert1}
\\
 \int_{t^{n-1}}^{t^{n}} \la\hbmu_{h,\tau}, \bar w_{h,\tau}\ra - \la\gamma \nabla \hat\phi_{h,\tau}, \nabla  \bar w_{h,\tau}\ra - \la f'(\hat\phi_{h,\tau}),\bar w_{h,\tau}\ra ds &=: \la \bar r_{2,h,\tau},\bar w_{h,\tau}\ra, \label{eq:discpert2}
\end{align}
for all test functions $\bar v_{h,\tau}, \bar w_{h,\tau} \in \Pi_0(t^{n-1},t^{n};\Vh)$, and all $0 < t^n \le T$. 
Note that the residuals $\bar r_{1,h,\tau}$, $\bar r_{2,h,\tau}$ are defined as piecewise constant functions of time, which we again designate by bar symbols. 
With very similar arguments as used for the derivation of the stability estimates in the previous sections, we now obtain the following result.
\begin{lemma} \label{lem:fullstab}
Let (A1)--(A4) hold and  $(\phi_{h,\tau},\bmu_{h,\tau})$ be a solution of Problem~\ref{prob:full} with stepsize $0<\tau \le \tau_0$ sufficiently small.
Furthermore, let $(\hat \phi_{h,\tau},\hbmu_{h,\tau}) \in \WW_{h,\tau}(0,T) \times \QQ_{h,\tau}(0,T)$ be given and $(\bar r_{1,h,\tau},\bar r_{2,h,\tau})$ denote the corresponding residuals defined by \eqref{eq:discpert1}--\eqref{eq:discpert2}. 
Then
\begin{align*}
 \E_\alpha(\phi_{h,\tau}&({t^{n}})|\hat \phi_{h,\tau}({t^{n}})) + \int_0^{t^{n}} \D_{\phi_{h,\tau}(s)}(\bmu_{h,\tau}(s)|\hbmu_{h,\tau}(s))ds \\
 &\leq e^{c t^n}\E_\alpha(\phi_{h,\tau}(0)|\hat \phi_{h,\tau}(0)) + Ce^{c t^n} \int_0^{t^{n}} \|\bar r_{1,h,\tau}(s)\|_{-1}^2 + \|\bar r_{2,h,\tau}(s)\|_1^2   ds
\end{align*}
for all $0 \le t^n \le T$ with constants $c = c_0 + c_1 \|\dt \hat \phi_{h,\tau}\|_{L^\infty(L^2)}$, and $c_0,c_1,C$ depending only on the bounds for the coefficients, the domain $\Omega$, and the uniform bounds for $(\phi_{h,\tau}, \mu_{h,\tau})$ and $(\hat \phi_{h,\tau},\hat \mu_{h,\tau})$ in $L^\infty(H^1) \times L^2(H^1)$. 
\end{lemma}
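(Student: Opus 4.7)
The plan is to mimic the proof of Theorem~\ref{thm:main} essentially verbatim, exploiting the Petrov--Galerkin structure of Problem~\ref{prob:full} to ensure that every substitution based on a variational identity remains admissible at the fully discrete level. The decisive observation is that, since $\phi_{h,\tau}$ and $\hat\phi_{h,\tau}$ are continuous and piecewise linear in time, the time derivatives $\dt\phi_{h,\tau}$, $\dt\hat\phi_{h,\tau}$, the differences $\bmu_{h,\tau}-\hbmu_{h,\tau}$ and $\bar r_{2,h,\tau}$, as well as the midpoint averages $\overline{\phi_{h,\tau}-\hat\phi_{h,\tau}}^{n-1/2}$, are all piecewise constant in time with values in $\Vh$, and hence lie in the test space used in \eqref{eq:pg1}--\eqref{eq:pg2} and \eqref{eq:discpert1}--\eqref{eq:discpert2}.

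First, I would differentiate $\E_\alpha(\phi_{h,\tau}(t)|\hat\phi_{h,\tau}(t))$ in $t$ on each open subinterval $(t^{n-1},t^{n})$; since $\phi_{h,\tau}$ and $\hat\phi_{h,\tau}$ are smooth on this interval, one obtains the very same Bregman identity as in the continuous case. Integrating this identity over $(t^{n-1},t^{n})$, expanding $\E'_\alpha$ into its $\gamma$-, $f'$-, and $\alpha$-contributions, and replacing the $\gamma$- and $f'$-contributions by means of \eqref{eq:pg2}--\eqref{eq:discpert2} tested with $\bar w_{h,\tau}=\dt\phi_{h,\tau}-\dt\hat\phi_{h,\tau}$, and the remaining contributions by means of \eqref{eq:pg1}--\eqref{eq:discpert1} tested with $\bar v_{h,\tau}=\bmu_{h,\tau}-\hbmu_{h,\tau}+\bar r_{2,h,\tau}$ and with $\bar v_{h,\tau}=\alpha\,\overline{\phi_{h,\tau}-\hat\phi_{h,\tau}}^{n-1/2}$, yields the same decomposition into five terms $(i)$--$(v)$ as in the proof of Theorem~\ref{thm:main}. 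The last substitution is legitimized by the identity
\[
\int_{t^{n-1}}^{t^{n}}\la\dt(\phi_{h,\tau}-\hat\phi_{h,\tau}),\phi_{h,\tau}-\hat\phi_{h,\tau}\ra ds=\int_{t^{n-1}}^{t^{n}}\la\dt(\phi_{h,\tau}-\hat\phi_{h,\tau}),\overline{\phi_{h,\tau}-\hat\phi_{h,\tau}}^{n-1/2}\ra ds,
\]
since both sides equal $\tfrac12\bigl[\|\phi_{h,\tau}-\hat\phi_{h,\tau}\|_0^2\bigr]_{t^{n-1}}^{t^{n}}$ by piecewise linearity in time.

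The per-interval estimates for $(i)$--$(v)$ can then be taken over word-for-word from the proof of Theorem~\ref{thm:main}, using Hölder's and Young's inequalities, assumptions (A1)--(A4), the Sobolev embedding $H^1\hookrightarrow L^6$ valid for $d\le 3$, the uniform $L^\infty(H^1)$-bound on $\phi_{h,\tau}$ from Lemma~\ref{lem:energy}, and the assumed uniform $L^\infty(H^1)$-bound on $\hat\phi_{h,\tau}$. Crucially, the Taylor expansion argument used in the continuous proof to bound the Bregman remainder of $f'$ in term $(v)$ applies pointwise in time and therefore extends unchanged to the piecewise linear functions $\phi_{h,\tau},\hat\phi_{h,\tau}$. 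Summing the resulting estimates over $n=1,\dots,N$ telescopes the relative energy and yields
\[
\E_\alpha(\phi_{h,\tau}(t^n)|\hat\phi_{h,\tau}(t^n))+\!\int_0^{t^n}\!\D_{\phi_{h,\tau}}(\bmu_{h,\tau}|\hbmu_{h,\tau}) ds\le \E_\alpha^0+C\!\int_0^{t^n}\!\bigl(\|\bar r_{1,h,\tau}\|_{-1}^2+\|\bar r_{2,h,\tau}\|_1^2\bigr) ds+\!\int_0^{t^n}\!\bigl(c_0+c_1\|\dt\hat\phi_{h,\tau}(s)\|_0\bigr)\E_\alpha ds,
\]
to which Grönwall's inequality \eqref{eq:gronwall} is applied, the claim following after estimating $\int_0^{t^n}\|\dt\hat\phi_{h,\tau}(s)\|_0 ds\le t^n\|\dt\hat\phi_{h,\tau}\|_{L^\infty(L^2)}$.

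The main technical obstacle is not in any individual estimate but in bookkeeping the mixed piecewise linear/piecewise constant structure in time cleanly when invoking the variational identities; in particular, the $\alpha$-contribution requires the midpoint-averaging trick above to produce an admissible test function, and the residual $\bar r_{2,h,\tau}$ must be carried along as part of the discrete chemical potential in the substitution of $(i)$ and $(ii)$. The stepsize restriction $\tau\le\tau_0$ enters only to guarantee that Grönwall's inequality can be applied on the discrete time grid without degeneracy and is otherwise unimportant for the derivation itself.
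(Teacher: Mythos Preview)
Your strategy is essentially the paper's: the same Bregman differentiation, the same substitution of \eqref{eq:pg2}/\eqref{eq:discpert2} with $\bar w_{h,\tau}=\dt\phi_{h,\tau}-\dt\hat\phi_{h,\tau}$, the same midpoint-averaging trick for the $\alpha$-term (the paper writes it as $\bar\pi_\tau^0(\phi_{h,\tau}-\hat\phi_{h,\tau})$ and combines it into a single test function $\bar v_{h,\tau}=\bmu_{h,\tau}-\hbmu_{h,\tau}+\bar r_{2,h,\tau}+\alpha\bar\pi_\tau^0(\phi_{h,\tau}-\hat\phi_{h,\tau})$), and the same $(i)$--$(v)$ estimates copied from Theorem~\ref{thm:main}.

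The one place where your sketch is imprecise is the final Gronwall step. After summing you obtain an inequality that holds only at the grid points $t^n$, while the Gronwall term on the right is $\int_0^{t^n}\lambda(s)\,\E_\alpha(\phi_{h,\tau}(s)|\hat\phi_{h,\tau}(s))\,ds$ with the \emph{continuous}-in-time relative energy; the continuous lemma \eqref{eq:gronwall} therefore does not apply directly. The paper closes this by invoking both bounds of Lemma~\ref{lem:equiv} together with the piecewise linearity in time to show
\[
\int_{t^{n-1}}^{t^n}\E_\alpha(\phi_{h,\tau}(s)|\hat\phi_{h,\tau}(s))\,ds \;\le\; c(\gamma)\,\tau\bigl(\E_\alpha(t^{n-1})+\E_\alpha(t^n)\bigr),
\]
and the restriction $\tau\le\tau_0$ is precisely what lets the implicit term $c(\gamma)c\,\tau\,\E_\alpha(t^n)$ be absorbed into the left-hand side. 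This converts the per-interval estimate into the recursion $u^n+b^n\le e^{\lambda}u^{n-1}+d^n$, to which the \emph{discrete} Gronwall inequality \eqref{eq:discgronwall} is applied. So $\tau_0$ is not a vague non-degeneracy safeguard but the concrete constant that makes this absorption possible.
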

\begin{remark}
It will become clear from the proof that the energy estimate of Lemma~\ref{lem:fullstab} holds uniformly for all $h>0$ and $0 < \tau \le \tau_0$ with $\tau_0$ only depending on the bounds for the coefficients, the domain $\Omega$, the time horizon $T$, as well as the uniform bounds for $(\phi_{h,\tau}, \mu_{h,\tau})$ and $(\hat \phi_{h,\tau},\hat \mu_{h,\tau})$ in $L^\infty(H^1) \times L^2(H^1)$ and on the bound for $\|\dt \hat \phi_{h,\tau}\|_{L^\infty(L^2)}$. 
\end{remark}
\begin{proof}
By the fundamental theorem of calculus, we obtain
\begin{align*}
    &\E_\alpha(\phi_{h,\tau}|\hat \phi_{h,\tau})\big|_{t^{n-1}}^{t^{n}} 
    = \int_{t^{n-1}}^{t^{n}} \ddt \E_\alpha(\phi_{h,\tau}|\hat \phi_{h,\tau}) \, ds \\
    &= \int_{t^{n-1}}^{t^{n}} \gamma \la \nabla \phi_{h,\tau} - \nabla \hat \phi_{h,\tau}, \nabla \dt \phi_{h,\tau} - \nabla \dt \hat \phi_{h,\tau}\ra 
    +\la f'(\phi_{h,\tau}) - f'(\hat\phi_{h,\tau}), \dt\phi_{h,\tau} - \dt\hat\phi_{h,\tau} \ra \\
    &\qquad \qquad  + \alpha \la \phi_{h,\tau} - \hat \phi_{h,\tau}, \dt \phi_{h,\tau} - \dt \hat \phi_{h,\tau}\ra \\
    &\qquad \qquad 
   + \la f'(\phi_{h,\tau}) - f'(\hat \phi_{h,\tau}) - f''(\hat \phi_{h,\tau}) (\phi_{h,\tau} - \hat \phi_{h,\tau}), \dt \hat \phi_{h,\tau}\ra \, ds \\
   &= \int_{t^{n-1}}^{t^{n}} \la \bmu_{h,\tau} - \hbmu_{h,\tau} + \bar r_{2,h,\tau}, \dt \phi_{h,\tau} - \dt \hat \phi_{h,\tau}\ra 
   + \alpha \la \phi_{h,\tau}- \hat \phi_{h,\tau}, \dt \phi_{h,\tau} - \dt \hat \phi_{h,\tau}\ra \\ 
   &\qquad \qquad 
   + \la f'(\phi_{h,\tau}) - f'(\hat \phi_{h,\tau}) - f''(\hat \phi_{h,\tau}) (\phi_{h,\tau} - \hat \phi_{h,\tau}), \dt \hat \phi_{h,\tau}\ra \, ds = (*).
\end{align*}
In the last step, we utilized the identites \eqref{eq:pg2} and \eqref{eq:discpert2} with the admissible test function $\bar w_{h,\tau} = \dt \phi_{h,\tau} \in \QQ_{h,\tau}$.
Since $\dt \phi_{h,\tau} - \dt \hat \phi_{h,\tau}$ is piecewise constant in time, we can replace 
$$
\int_{t^{n-1}}^{t^n} \alpha \la \phi_{h,\tau}- \hat \phi_{h,\tau}, \dt \phi_{h,\tau} - \dt \hat \phi_{h,\tau}\ra dt 
= 
\int_{t^{n-1}}^{t^n} \alpha \la \bar\pi_\tau^0 \phi_{h,\tau}- \bar\pi_\tau^0 \hat \phi_{h,\tau}, \dt\phi_{h,\tau} - \dt \hat \phi_{h,\tau}\ra dt
$$
in the previous expression, where $\bar\pi_\tau^0 : \WW_{h,\tau} \to \QQ_{h,\tau}$ denotes the $L^2$-orthogonal projection in time onto piecewise constants.  
Employing $\bar v_{h,\tau} = \mu_{h,\tau} - \hat \mu_{h,\tau} + \bar r_{2,h,\tau} + \alpha \bar\pi_\tau^0 (\phi_{h,\tau} - \hat \phi_{h,\tau}) \in \QQ_{h,\tau}$ as a test function in the identities \eqref{eq:pg1} and \eqref{eq:discpert1}, we further obtain 
\begin{align*}
(*)
  &=  \int_{t^{n-1}}^{t^{n}} -\la b(\phi_{h,\tau}) \nabla (\bmu_{h,\tau} - \hbmu_{h,\tau}), \nabla(\bmu_{h,\tau}-\hbmu_{h,\tau} + \bar r_{2,h,\tau})\ra - \la \bar r_{1,h,\tau}, \bmu_{h,\tau} - \hbmu_{h,\tau} + \bar r_{2,h,\tau} \ra \\
& \qquad \qquad - \alpha \la b(\phi_{h,\tau}) \nabla (\bmu_{h,\tau} - \hbmu_{h,\tau}), \nabla (\phi_{h,\tau} - \hat \phi_{h,\tau}) \ra - \alpha \la \bar r_{1,h,\tau}, \phi_{h,\tau} - \hat \phi_{h,\tau} \ra \\
 & \qquad \qquad + \la f'(\phi_{h,\tau}) - f'(\hat \phi_{h,\tau}) - f''(\hat \phi_{h,\tau})(\phi_{h,\tau} - \hat \phi_{h,\tau}), \dt \hat \phi_{h,\tau} \ra \, ds.
\end{align*}
At this point, we can start to estimate the individual terms in the same manner, as in the proof of Theorem \ref{thm:main}.
In this way, we arrive at
\begin{align*}
 \E_\alpha(\phi_{h,\tau}|\hat \phi_{h,\tau})&\big|_{t^{n-1}}^{t^{n}} + \int_{t^{n-1}}^{t^{n}} \D_{\phi_{h,\tau}(s)}(\bmu_{h,\tau}(s)|\hbmu_{h,\tau}(s))ds \\
 &\leq \int_{t^{n-1}}^{t^{n}} c \E_\alpha(\phi_{h,\tau}(s)|\hat \phi_{h,\tau}(s))ds+ \int_{t^{n-1}}^{t^{n}} C_1\|\bar r_{1,h,\tau}(s)\|_{-1}^2 + C_2\|\bar r_{2,h,\tau}(s)\|_1^2 ds,
\end{align*}
where we may choose $c=c_1 + c_2  \|\dt \hat \phi_{h,\tau}(s)\|_{L^\infty(L^2)}$ here with $c_1,c_2$ and also $C_1,C_2$ depending only on the uniform $L^\infty(H^1) \times L^2(H^1)$ bounds for $(\phi_{h,\tau},\bmu_{h,\tau})$ and $(\hat \phi_{h,\tau},\hbmu_{h,\tau})$, as well as on the bounds for the coefficients, and the domain $\Omega$, and the time horizon. 
Using the uniform $L^\infty(H^1)$ bounds for $\phi_{h,\tau}$ and $\hat \phi_{h,\tau}$ and \eqref{eq:lower_bound_rel}, we can further estimate the first term on the right hand side of the previous inequality by
\begin{align*}
    \int_{t^{n-1}}^{t^n} \E_\alpha(\phi_{h,\tau}|\hat \phi_{h,\tau}) ds \le c(\gamma) \tau (\E_\alpha(\phi_{h,\tau}(t^n)) + \E_{\alpha}(\phi_{h,\tau})(t^{n-1})). 
\end{align*}
Under the assumption that $\tau \le 1/(2 c(\gamma) c) =: \tau_0$, we can rewrite the estimate into 
\begin{align*}
u^n + b^n \le e^{\lambda} u^{n-1} + d^n,    
\end{align*}
with $u^n=\E_\alpha(\phi_{h,\tau}(t^n)|\hat \phi_{h,\tau}(t^n))$, $b^n =  e^{\gamma \tau} \int_{t^{n-1}}^{t^n} \D_{\phi_{h,\tau}}(s) ds$, $d^n = e^{\gamma \tau} \int_{t^{n-1}}^{t^{n}} C_1\|\bar r_{1,h,\tau}(s)\|_{-1}^2 + C_2\|\bar r_{2,h,\tau}(s)\|_1^2 ds$, and $e^{\gamma \tau} = \frac{1+c(\gamma) c \tau}{1-c(\gamma) c \tau}$, which corresponds to $\gamma \approx 2 c(\gamma) c$. 
The assertion then follows by the discrete Gronwall-inequality \eqref{eq:discgronwall} and the bounds \eqref{eq:lower_bound_rel} and \eqref{eq:lower_bound_rel_dis} for the relative energy and dissipation functionals. 
\end{proof}

\subsection{Auxiliary results}
Similar to the semi-discrete case, we will utilize certain projections to define suitable approximations $\hat \phi_{h,\tau}$ and $\hbmu_{h,\tau}$ for solutions $(\phi,\mu)$ to \eqref{eq:ch1}--\eqref{eq:ch2} that allow us to take advantage of the discrete stability estimate. 
To this end let 
$$
\I_\tau^1:H^1(0,T)\to \Pi_1^c(\Itau), \qquad  \I_\tau^1 u(t^n)=u(t^n)
$$
denote the piecewise linear interpolation with respect to time. Furthermore, let 
$$
\bar \pi_\tau^0 : L^2(0,T) \to \Pi_0(\I_\tau), \qquad \bar \pi_\tau^0 u(t) = \frac{1}{\tau} \int_{t^{n-1}}^{t^n} u(t) dt, \qquad t \in (t^{n-1}, t^n),
$$
be the $L^2$-orthogonal projection to piecewise constant functions in time.
For later reference, we summarize some important properties of these operators. 
\begin{lemma}
For $u \in W^{r,q}(0,T)$, $0 \le r \le 1$, $1 \le p \le q \le \infty$, there holds 
\begin{align} \label{eq:timprojest}
    \|u - \bar\pi_\tau^0 u\|_{L^p(0,T)} \le C \tau^{1/p-1/q+r} \|u\|_{W^{r,q}(0,T)},
\end{align}
and for $u \in W^{r,q}(0,T)$ with $1 \le r \le 2$ and $1 \le p \le q  \le \infty$, one has
\begin{align}\label{eq:timinterpest}
    \|u - \I_\tau^1 u\|_{L^p(0,T)} &\leq C\tau^{1/p-1/q+r}\norm{u}_{W^{r,q}(0,T)}.
\end{align}
Moreover, the interpolation and projection operators commute with differentiation, i.e., 
\begin{align} \label{eq:commuting} 
\dt (\I_\tau^1 u) = \bar \pi_\tau^0 (\dt u).
\end{align}
\end{lemma}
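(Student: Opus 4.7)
The plan is to treat the three assertions separately, relying on classical one-dimensional approximation theory applied subinterval by subinterval on $I_n := (t^{n-1}, t^n)$ and then assembling.

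The commutation identity \eqref{eq:commuting} is immediate. On each $I_n$ the interpolant reads
$$
\I_\tau^1 u(t) = u(t^{n-1}) + \frac{u(t^n)-u(t^{n-1})}{\tau}(t - t^{n-1}),
$$
whence $\dt(\I_\tau^1 u)|_{I_n} \equiv \tau^{-1}(u(t^n)-u(t^{n-1}))$. By the fundamental theorem of calculus the right hand side equals $\tau^{-1}\int_{I_n} \dt u \, ds$, which is by definition $\bar\pi_\tau^0(\dt u)|_{I_n}$. Since this is true on every subinterval, \eqref{eq:commuting} holds globally.

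For the projection error \eqref{eq:timprojest} and the interpolation error \eqref{eq:timinterpest}, I would use a Bramble--Hilbert argument on the reference interval $(0,1)$, exploiting that $\bar\pi_\tau^0$ reproduces constants and $\I_\tau^1$ reproduces affine functions. This yields, at the integer endpoints $r \in \{0,1\}$ and $r \in \{1,2\}$ respectively, reference estimates of the form $\|\hat u - \hat\pi \hat u\|_{L^p(0,1)} \le C\|\hat u\|_{W^{r,q}(0,1)}$, with the Sobolev embedding $W^{r,q}(0,1) \hookrightarrow L^p(0,1)$ being used where needed to pass from the $L^q$-based semi-norm controlling the Bramble--Hilbert estimate to the $L^p$ left-hand side. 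Pulling these bounds back to $I_n$ through the affine change of variables $t = t^{n-1} + \tau \hat t$ produces the claimed $\tau$-powers via the Jacobians of the $L^p$- and $L^q$-norms. Summation over the $N=T/\tau$ subintervals then delivers the global estimates for integer $r$, and fractional values of $r$ follow by real interpolation of the Sobolev scale $W^{r,q}(0,T)$ between the two integer endpoints.

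The only delicate point is careful bookkeeping of the scaling exponents under the affine transformation and of the local-to-global $\ell^p$-summation; no step is conceptually deep, and all required ingredients (Bramble--Hilbert lemma, scaling arguments, real interpolation of Sobolev spaces on a bounded interval) are classical and can be found in standard finite element monographs. I therefore expect the main obstacle to be purely notational rather than mathematical.
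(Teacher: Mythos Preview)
Your proposal is correct and in fact supplies more detail than the paper does: the paper treats this lemma as standard and simply refers to \cite{BrennerScott} for the proof, whereas you sketch the underlying Bramble--Hilbert plus scaling plus interpolation argument that such a reference would contain. Both approaches are therefore the same in spirit, and your outline is a faithful expansion of what the cited textbook argument actually involves.
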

\begin{proof}
The proof for these standard results can be found, e.g., in \cite{BrennerScott}. 
\end{proof}
The interpolation operator naturally extends to vector valued functions and we use the same symbol in that case. 
For the piecewise-constant $L^2$-projection we can show the following estimate for the product error; see appendix \ref{sec_proj} for a proof.
\begin{lemma} \label{lem:l2timeproduct}
Let $u,v\in W^{2,p}(0,T)$ and $\bar a = \bar \pi^0_\tau a$ denotes the $L^2$-orthogonal projection onto piecewise constants. 
Then
\begin{align}  \label{eq:l2timeproduct}
\|\bar u\bar v -\overline{uv}\|_{L^p(0,T)} 
\le C \tau^2 \|u\|_{W^{2,p}(0,T)} \|v\|_{W^{2,p}(0,T)} 
\end{align}
with a constant $C$ independent of $\tau$ and $p$ as well as the functions $u$ and $v$.
\end{lemma}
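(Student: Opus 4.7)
My plan is to localize to each subinterval $I_n=(t^{n-1},t^n)$ and exploit the elementary identity
\[
\overline{uv}\big|_{I_n} - \bar u\bar v\big|_{I_n}
 = \frac{1}{\tau}\int_{I_n}(u-\bar u_n)(v-\bar v_n)\,ds,
\]
which follows by expanding the right-hand side and using that $u-\bar u_n$ and $v-\bar v_n$ both have vanishing mean on $I_n$. Since $\bar u\bar v-\overline{uv}$ is piecewise constant in time, this reduces the global $L^p$-estimate to bounding a covariance-type quantity on each subinterval. The point is that it is the mean-zero property of \emph{both} factors (not just one) that must produce an extra power of $\tau$ beyond the routine bound $\|u-\bar u_n\|_{L^p(I_n)}\le C\tau\|u'\|_{L^p(I_n)}$.

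To extract this cancellation explicitly I would Taylor-expand both $u$ and $v$ about the midpoint $t_{n-1/2}:=t^{n-1}+\tau/2$ in the form
\[
u(s)=u(t_{n-1/2})+u'(t_{n-1/2})(s-t_{n-1/2})+R_u(s),\qquad R_u(s):=\int_{t_{n-1/2}}^s(s-\rho)\,u''(\rho)\,d\rho,
\]
and analogously for $v$. The constant part drops out of $u-\bar u_n$, while the affine part integrates to zero on $I_n$, so substitution into the product gives a leading contribution $u'(t_{n-1/2})\,v'(t_{n-1/2})(s-t_{n-1/2})^2$ whose average on $I_n$ equals $\tfrac{\tau^2}{12}u'(t_{n-1/2})v'(t_{n-1/2})$. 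Every remaining cross or quadratic term involves at least one factor $R_u$ or $R_v$, each controlled via Hölder by $C\tau^{2-1/p}\|u''\|_{L^p(I_n)}$ (and the analogue for $v$), so that these are of strictly higher order in $\tau$.

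This yields the local bound $|\bar u_n\bar v_n-\overline{uv}_n|\le C\tau^2|u'(t_{n-1/2})v'(t_{n-1/2})|+(\text{higher order})$. Raising to the $p$-th power, multiplying by the interval length $\tau$, and summing over $n$, I would then invoke the scaled one-dimensional embedding $|u'(t_{n-1/2})|^p\le C\tau^{-1}\|u\|_{W^{2,p}(I_n)}^p$ for one factor and the global Sobolev embedding $W^{2,p}(0,T)\hookrightarrow W^{1,\infty}(0,T)$ for the other; the first yields $\sum_n|u'(t_{n-1/2})|^p\le C\tau^{-1}\|u\|_{W^{2,p}(0,T)}^p$ via $\sum_n\|u\|_{W^{2,p}(I_n)}^p = \|u\|_{W^{2,p}(0,T)}^p$, while the second supplies the uniform bound on $|v'(t_{n-1/2})|$. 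Combining these assembles the global estimate $\|\bar u\bar v-\overline{uv}\|_{L^p(0,T)}\le C\tau^2\|u\|_{W^{2,p}(0,T)}\|v\|_{W^{2,p}(0,T)}$.

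The main nuisance will be the bookkeeping of $\tau$-powers as one Hölders back and forth between $L^p(I_n)$, $L^\infty(I_n)$, and pointwise evaluations, since each step introduces factors $\tau^{\pm 1/p}$ that have to cancel so the final exponent is exactly $\tau^2$ uniformly in $p$. The underlying algebra (the covariance identity, the Taylor expansion, and the mean-zero cancellation) is straightforward; the packaging and the assembly via the 1D embeddings are the only delicate parts.
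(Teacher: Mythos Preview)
Your approach is correct and workable, but it differs from the paper's. You start from the covariance identity $\overline{uv}-\bar u\bar v=\tfrac{1}{\tau}\int_{I_n}(u-\bar u_n)(v-\bar v_n)\,ds$ and then Taylor-expand explicitly about the midpoint to extract the second order. The paper instead introduces the \emph{midpoint interpolant} $\tilde a:=a(t_{n-1/2})$ and telescopes
\[
\bar u\bar v-\overline{uv}=(\bar u-\tilde u)\bar v+\tilde u(\bar v-\tilde v)+(\tilde u\tilde v-\widetilde{uv})+(\widetilde{uv}-\overline{uv}),
\]
noting that the third term vanishes identically, while the remaining three are each $O(\tau^2)$ by the super-closeness estimate $\|\bar a-\tilde a\|_{L^p(I_n)}\le C\tau^2\|a\|_{W^{2,p}(I_n)}$ (the average and midpoint value agree on affine functions, so this follows from a Bramble--Hilbert/scaling argument). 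Both routes arrive at a local bound of the shape $C\tau^2\big(\|u\|_{W^{2,p}(I_n)}\|v\|_{W^{1,\infty}}+\|u\|_{W^{1,\infty}}\|v\|_{W^{2,p}(I_n)}\big)$ and then globalize by summing the $W^{2,p}$ factor while bounding the $W^{1,\infty}$ factor via $W^{2,p}(0,T)\hookrightarrow W^{1,\infty}(0,T)$, exactly as you outline. The paper's telescoping replaces the explicit Taylor bookkeeping of the cross and remainder terms you flagged as the ``main nuisance'' by a single abstract super-closeness lemma; your approach, in turn, makes the leading constant $\tau^2/12$ explicit. One minor slip in your write-up: the constant part of $u$ does not literally coincide with $\bar u_n$ (there is a correction $\overline{R_u}$), so in fact $u-\bar u_n=u'(m)(s-m)+R_u-\overline{R_u}$; this does not affect your argument since $R_u-\overline{R_u}$ is again mean-zero and of the same size as $R_u$.
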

\noindent 
As fully discrete approximations $(\hat \phi_{h,\tau},\hbmu_{h,\tau}) \in \WW_{h,\tau} \times \QQ_{h,\tau}$ for solutions $(\phi,\mu)$ of \eqref{eq:ch1}--\eqref{eq:ch2}, to be used in the subsequent error analysis, we now define
\begin{align} \label{eq:fullproj}
\hat \phi_{h,\tau} = \I_\tau^1 \pi_h^1 \phi 
\qquad \text{and} \qquad 
\hbmu_{h,\tau} = \bar \pi_\tau^0 \pi_h^0 \mu.
\end{align}
For this particular choice, we can make the following observation.
\begin{lemma}
Let $(\phi,\mu)$ denote a sufficiently regular periodic weak solution of \eqref{eq:weak1}--\eqref{eq:weak2}, and let $(\hat \phi_{h,\tau},\hbmu_{h,\tau})$ be defined as above. Then \eqref{eq:pg1}--\eqref{eq:pg2} holds with residuals
\begin{align*}
\int_{t^{n-1}}^{t^n} \la \bar r_{1,h,\tau}, \bar v_{h,\tau}\ra ds 
    &= \int_{t^{n-1}}^{t^n} \la \dt (\pi_h^1 \phi - \phi), \bar v_{h,\tau}\ra + \la b(\phi_{h,\tau}) \nabla \hbmu_{h,\tau} - b(\phi) \nabla \mu, \nabla \bar v_{h,\tau}\ra ds \\
\int_{t^{n-1}}^{t^n} \la  \bar r_{2,h,\tau}, \bar w_{h,\tau} \ra ds 
    &= \int_{t^{n-1}}^{t^n} \la \hbmu_{h,\tau} - I_\tau^1 \mu, \bar w_{h,\tau}\ra 
       + \gamma \la \nabla (\hat \phi_{h,\tau} - \I_\tau^1 \phi), \nabla \bar w_{h,\tau}\ra \\
    & \qquad \qquad \qquad \qquad \qquad \quad  + \la f'(\hat \phi_{h,\tau}) - \I_\tau^1 f'(\phi), \bar w_{h,\tau}\ra \, ds.  
    \end{align*}
\end{lemma}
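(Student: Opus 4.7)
The plan is to evaluate the residuals $\bar r_{1,h,\tau}$ and $\bar r_{2,h,\tau}$ from their defining identities \eqref{eq:discpert1}--\eqref{eq:discpert2} by substituting the particular choice $\hat\phi_{h,\tau}=\I_\tau^1\pi_h^1\phi$ and $\hbmu_{h,\tau}=\bar\pi_\tau^0\pi_h^0\mu$, and then inserting the continuous variational identities \eqref{eq:weak1}--\eqref{eq:weak2} satisfied by $(\phi,\mu)$ to cancel a maximal number of terms. The two residuals are treated separately.

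For $\bar r_{1,h,\tau}$, I would first observe that by the commuting property \eqref{eq:commuting} one has $\dt \hat\phi_{h,\tau}=\bar\pi_\tau^0\dt(\pi_h^1\phi)$. Since $\bar v_{h,\tau}$ is piecewise constant in time and $\bar\pi_\tau^0$ is the $L^2$-orthogonal projection onto piecewise constants, the projection can be removed, so that
\[
\int_{t^{n-1}}^{t^n}\la \dt\hat\phi_{h,\tau},\bar v_{h,\tau}\ra\,ds
= \int_{t^{n-1}}^{t^n}\la \dt(\pi_h^1\phi),\bar v_{h,\tau}\ra\,ds.
\]
Adding and subtracting $\dt\phi$ and using \eqref{eq:weak1}, which gives $\la\dt\phi,\bar v_{h,\tau}\ra=-\la b(\phi)\nabla\mu,\nabla\bar v_{h,\tau}\ra$ for a.a. $t$, then produces the stated formula after grouping the $b(\phi_{h,\tau})\nabla\hbmu_{h,\tau}-b(\phi)\nabla\mu$ term.

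For $\bar r_{2,h,\tau}$, the key observation is that the time-interpolant satisfies the weak equation \eqref{eq:weak2} in an integrated sense against piecewise constant test functions. Indeed, since $(\phi,\mu)$ is sufficiently regular, \eqref{eq:weak2} holds at every nodal time $t^k$, so that for any $w\in H^1_p(\Omega)$ and every $k$,
\[
\la\mu(t^k),w\ra - \gamma\la\nabla\phi(t^k),\nabla w\ra - \la f'(\phi(t^k)),w\ra = 0.
\]
Taking a $\tau/2$-weighted sum at the two endpoints of $(t^{n-1},t^n)$ is precisely the trapezoidal rule, which equals $\int_{t^{n-1}}^{t^n}\I_\tau^1 g(s)\,ds$ for any function $g$, and since $\nabla$ commutes with $\I_\tau^1$, one obtains
\[
\int_{t^{n-1}}^{t^n}\bigl(\la\I_\tau^1\mu,\bar w_{h,\tau}\ra - \gamma\la\nabla\I_\tau^1\phi,\nabla\bar w_{h,\tau}\ra - \la\I_\tau^1 f'(\phi),\bar w_{h,\tau}\ra\bigr)\,ds = 0.
\]
Adding and subtracting $\I_\tau^1\mu$, $\I_\tau^1\phi$, and $\I_\tau^1 f'(\phi)$ in the defining identity \eqref{eq:discpert2} and exploiting this vanishing identity yields the stated expression for $\bar r_{2,h,\tau}$ as the sum of three pure projection-error contributions.

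I expect the only mildly delicate point to be justifying that \eqref{eq:weak2} can be evaluated pointwise at the nodes $t^k$; this uses the continuity of $(\phi,\mu)$ in appropriate function spaces guaranteed by the higher regularity stated after Lemma~\ref{lem:weak} (cf.~\eqref{eq:continuous}). Once that is in place, everything else is a bookkeeping argument relying on the elementary facts that $\bar\pi_\tau^0$ is self-adjoint against piecewise-constant test functions, that $\nabla$ commutes with $\I_\tau^1$ and $\bar\pi_\tau^0$, and that $\int_{t^{n-1}}^{t^n}\I_\tau^1 g\,ds$ coincides with the trapezoidal rule.
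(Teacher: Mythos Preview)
Your argument is correct and follows essentially the same route as the paper: for $\bar r_{1,h,\tau}$ you both use the commuting property \eqref{eq:commuting} to remove $\I_\tau^1$ from the time derivative and then subtract the continuous equation \eqref{eq:weak1}; for $\bar r_{2,h,\tau}$ you both evaluate \eqref{eq:weak2} at the nodes $t^{n-1},t^n$ and use that $\int_{t^{n-1}}^{t^n}\I_\tau^1 g\,ds$ equals the trapezoidal rule to obtain the interpolated identity, then subtract. Your explicit remark about needing continuity of $(\phi,\mu)$ in time to justify pointwise evaluation at the nodes is a welcome addition that the paper leaves implicit.
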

\begin{proof}
Testing \eqref{eq:weak1} with $v=\bar v_{h,\tau}$ and integration over time yields 
\begin{align*}
    \int_{t^{n-1}}^{t^n} \la \dt \phi, \bar v_{h,\tau}\ra + \la b(\phi) \nabla \mu, \nabla \bar v_{h,\tau}\ra &= 0.
\end{align*}
The first identity then follows by taking the difference of this equation with \eqref{eq:pg1}, and noting that
\begin{align*}
    \int_0^{t^n} \la \dt \hat \phi_{h,\tau}, \bar v_{h,\tau}\ra \, ds 
    &= \int_0^{t^n} \la \dt \I_\tau^1 \pi_h^1 \phi, \bar v_{h,\tau}\ra \, ds 
    = \int_0^{t^n} \la \dt \pi_h^1 \phi, \bar v_{h,\tau}\ra \, ds, 
\end{align*}
which follows from \eqref{eq:commuting} and the fact that $\bar v_{h,\tau}$ is piecewise constant in time. 
Testing equation \eqref{eq:weak2} at time $t^{n-1}$ and $t^n$ with $\bar w_{h,\tau}$, and noting that $\int_{t^{n-1}}^{t^n} a(t) \bar b(t) dt = \frac{\tau}{2} (a_1(t^n) + a_2(t^n)) b(t^{n}-\tau/2)$ for all $a \in \Pi_1(t^{n-1},t^n)$ and $\bar b_0 \in \Pi_0(t^{n-1},t^n)$, one can see that 
\begin{align*}
\int_{t^{n-1}}^{t^n} \la \I_\tau^1 \mu, \bar w_{h,\tau} \ra + \gamma \la \nabla \I_\tau^1 \phi, \nabla \bar w_{h,\tau}\ra + \la \I_\tau^1 f'(\phi), \bar w_{h,\tau}\ra \, ds = 0.
\end{align*}
Combination with with \eqref{eq:pg2} then yields the second identity.
\end{proof}

As a next step, we derive bounds for the discrete residuals in terms of interpolation and projection errors. 
For ease of notation, we will write $W^{k,p}(X) = W^{k,p}(a,b;X)$ for different choices of the time interval $(a,b)$, which will be clear from the context. 
\begin{lemma} \label{lem:pqresidualest}
Let $(\phi,\mu)$ denote a sufficiently regular weak solution of \eqref{eq:weak1}--\eqref{eq:weak2}. 
Then 
\begin{align*}
\int_{t^{n-1}}^{t^n} \|\bar r_{1,h,\tau}\|_{-1,h}^2 ds &\le C_0(\phi,\mu)\tau^4 + C_1(\phi,\mu)h^4  +  C(b,\mu)\int_{t^{n-1}}^{t^n}  \E_\alpha(\phi_{h,\tau}(s)|\hat \phi_{h,\tau}(s)) ds, \\
\int_{t^{n-1}}^{t^n} \|\bar r_{2,h,\tau}\|_1^2 ds &\leq C_2(\phi,\mu) \tau^4 + C_3(\phi) h^4
\end{align*}
for all $0 < t^n \le T$ with constants $C(\cdot)$ independent of $h$, $\tau$, and $t^n$.
All Bochner-norms $W^{k,p}(X)=W^{k,p}(t^{n-1},t^n;X)$ here refer to the time interval $(t^{n-1},t^n)$ under consideration.
\end{lemma}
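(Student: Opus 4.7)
The plan is to estimate each summand appearing in the explicit residual formulas from the preceding lemma, using the interpolation and projection error estimates \eqref{eq:l2projest}, \eqref{eq:h1porjest}, \eqref{eq:timprojest}, \eqref{eq:timinterpest}, the product estimate of Lemma~\ref{lem:l2timeproduct}, the Lipschitz bounds on $b$ from (A2), and the polynomial growth of $f',f''$ from (A3). Both residuals are piecewise constant in time, so by Hölder's inequality $\int_{t^{n-1}}^{t^n}\la \bar r, \bar v_{h,\tau}\ra ds \le \big(\int\|\bar r\|_{\star}^2\big)^{1/2}\big(\int\|\bar v_{h,\tau}\|_{\star'}^2\big)^{1/2}$ so it suffices to bound each summand termwise by a quantity of the required form multiplied by $\|\bar v_{h,\tau}\|_{1}$ or $\|\bar w_{h,\tau}\|_1$, respectively.

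For $\bar r_{2,h,\tau}$, which is solution-independent, I would treat the three summands as follows. The first summand $\hbmu_{h,\tau}-\I_\tau^1\mu=\bar\pi_\tau^0\pi_h^0\mu-\I_\tau^1\mu$ is split through the intermediate function $\bar\pi_\tau^0\mu$: the piece $\bar\pi_\tau^0(\pi_h^0\mu-\mu)$ is $O(h^2)$ by \eqref{eq:l2projest}, and $\bar\pi_\tau^0\mu-\I_\tau^1\mu$ is $O(\tau^2)$ by combining \eqref{eq:timprojest} with \eqref{eq:timinterpest} and sufficient regularity of $\mu$. The second summand equals $\gamma\nabla \I_\tau^1(\pi_h^1\phi-\phi)$ and is $O(h^2)$ uniformly in time by \eqref{eq:h1porjest} and the stability of $\I_\tau^1$. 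The third summand $f'(\hat\phi_{h,\tau})-\I_\tau^1 f'(\phi)$ I would split into $f'(\hat\phi_{h,\tau})-f'(\I_\tau^1\phi)$, controlled by the Lipschitz property of $f'$ applied through the mean value theorem plus the uniform $L^\infty$ bounds \eqref{eq:uniform} on $\hat\phi_{h,\tau}$ and $\phi$ (yielding an $O(h^2)$ bound via \eqref{eq:h1porjest}), and $f'(\I_\tau^1\phi)-\I_\tau^1 f'(\phi)$, which is a standard interpolation error $O(\tau^2)$ of the smooth composition $f'\circ\phi\in W^{2,p}$.

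For $\bar r_{1,h,\tau}$ the first summand $\dt(\pi_h^1\phi-\phi)$ is bounded in the discrete dual norm $\|\cdot\|_{-1,h}\le\|\cdot\|_{-1}$ by an $O(h^2)$ duality argument on $\pi_h^1$. The main obstacle is the nonlinear term $b(\phi_{h,\tau})\nabla\hbmu_{h,\tau}-b(\phi)\nabla\mu$, since $\phi_{h,\tau}$ is the unknown discrete solution. I would decompose it as
\begin{align*}
b(\phi_{h,\tau})\nabla\hbmu_{h,\tau}-b(\phi)\nabla\mu
&= b(\phi_{h,\tau})\nabla(\hbmu_{h,\tau}-\mu)\\
&\quad + (b(\phi_{h,\tau})-b(\hat\phi_{h,\tau}))\nabla\mu + (b(\hat\phi_{h,\tau})-b(\phi))\nabla\mu.
\end{align*}
The first piece uses $\|b(\phi_{h,\tau})\|_{\infty}\le b_2$ together with the triangle inequality $\nabla(\hbmu_{h,\tau}-\mu) = \nabla(\bar\pi_\tau^0\pi_h^0\mu-\pi_h^0\mu) + \nabla(\pi_h^0\mu-\mu)$ and \eqref{eq:l2projest}, \eqref{eq:timprojest} to get $O(h^2+\tau^2)$. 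The third piece uses $|b(\hat\phi_{h,\tau})-b(\phi)|\le b_3|\hat\phi_{h,\tau}-\phi|$, Hölder's inequality with the factor $\|\nabla\mu\|_{0,3}$, the Sobolev embedding $H^1\hookrightarrow L^6$ on $\hat\phi_{h,\tau}-\phi$, and the projection error estimate, again of order $h^2$. The middle piece gives an $\E_\alpha$-contribution via the same Hölder/embedding chain, together with the lower bound \eqref{eq:lower_bound_rel}, which is exactly what allows the final Gronwall argument in the error analysis.

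The hard part will be keeping control of the time-projection errors in the nonlinear product $b(\phi_{h,\tau})\nabla\hbmu_{h,\tau}-b(\phi)\nabla\mu$ without losing a power of $\tau$: the decomposition above succeeds precisely because splitting off $b(\hat\phi_{h,\tau})\nabla\mu$ first avoids the need to apply Lemma~\ref{lem:l2timeproduct} to the full product $b(\phi)\nabla\mu$, whose regularity is only barely sufficient. Assembling the estimates over the interval $(t^{n-1},t^n)$, using \eqref{eq:dualh} to pass from the test-function pairing to the discrete dual norm, and absorbing all terms not involving $\phi_{h,\tau}$ directly into $C_i(\phi,\mu)$ constants, yields the stated bounds.
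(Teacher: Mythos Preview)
Your treatment of $\bar r_{2,h,\tau}$ is essentially fine and close to the paper's, but there is a genuine gap in your decomposition of the nonlinear term in $\bar r_{1,h,\tau}$.

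For the first piece $b(\phi_{h,\tau})\nabla(\hbmu_{h,\tau}-\mu)$ you invoke \eqref{eq:timprojest} to claim an $O(\tau^2)$ bound. But \eqref{eq:timprojest} only holds for $0\le r\le 1$, so $\|\nabla(\bar\pi_\tau^0\mu-\mu)\|_{L^2(t^{n-1},t^n;L^2)}$ is at best $O(\tau)$, giving $O(\tau^2)$ after squaring---two orders short of the required $O(\tau^4)$. Your remark that the decomposition ``avoids the need to apply Lemma~\ref{lem:l2timeproduct}'' is precisely the wrong diagnosis: that lemma is what recovers the missing power of $\tau$, and it cannot be avoided.

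The paper's fix is to exploit that the test function $\bar v_{h,\tau}$ is piecewise constant, so only the \emph{time average} of the integrand matters. Writing $\overline{a}=\bar\pi_\tau^0 a$, one has $\overline{b(\phi_{h,\tau})\nabla\hbmu_{h,\tau}}=\overline{b(\phi_{h,\tau})}\,\nabla\hbmu_{h,\tau}$ since $\hbmu_{h,\tau}$ is already constant in time. The difference is then split as
\[
\overline{b(\phi_{h,\tau})}\,\nabla(\hbmu_{h,\tau}-\overline{\mu})
+(\overline{b(\phi_{h,\tau})}-\overline{b(\phi)})\,\nabla\overline{\mu}
+\bigl(\overline{b(\phi)}\,\overline{\nabla\mu}-\overline{b(\phi)\nabla\mu}\bigr).
\]
In the first term $\hbmu_{h,\tau}-\overline{\mu}=\overline{\pi_h^0\mu-\mu}$ carries only the spatial projection error $O(h^2)$; the second term gives the $h^4+\tau^4$ and relative-energy contributions as you described; and the third term is exactly the product commutator on the \emph{smooth} data $(\phi,\mu)$, where Lemma~\ref{lem:l2timeproduct} legitimately yields $O(\tau^2)$. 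If instead you tried to push the commutator onto $b(\phi_{h,\tau})$---which is what your decomposition implicitly does after averaging---you would need $W^{2,p}$-in-time control of the discrete solution, which is not available.
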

\begin{proof}

Since $\bar v_{h,\tau}$ is piecewise constant in time, we can use \eqref{eq:commuting}, the definition of $\hat \phi_{h,\tau}$, and the bounds for the $H^1$-projection error, to estimate the residual by
\begin{align*}
    \int_{t^{n-1}}^{t^n} \|\bar r_{1,h,\tau}\|^2_{-1,h} ds &\le C\int_{t^{n-1}}^{t^n}  \|\dt (\pi_h^1 \phi - \phi)\|^2_{-1} +  \|\overline{b(\phi_{h,\tau})} \nabla \hbmu_{h,\tau} - \overline{b(\phi) \nabla \mu}\|^2_0 \, ds \\
    &\le C h^4 \|\dt \phi\|_{L^2(H^1_p)}^2 + C(*)^2.
\end{align*}
Here and in the following, we use $\overline{a} = \bar \pi_\tau^0 a$ to abbreviate the projection onto piecewise constant functions in time. 
The remaining term can be further estimated by
\begin{align*}
    (*)^2 & \le \|\overline{b(\phi_{h,\tau})} \nabla (\hbmu_{h,\tau} - \overline{\mu})\|_{L^2(L^2)}^2 + \|(\overline{b(\phi_{h,\tau})} - \overline{b(\phi)}) \nabla \overline{\mu}\|_{L^2(L^2)}^2 \\
    & \qquad \qquad \qquad  \qquad + \|\overline{b(\phi) \nabla \mu} - \overline{b(\phi)}\;\overline{\na\mu}\|_{L^2(L^2)}^2 
    = (i)+(ii)+(iii).
\end{align*}
Using the boundedness of $b$, the definition of $\hbmu_{h,\tau}$, and the stability and error estimates for the $L^2$-projection \eqref{eq:l2projest}, we immediately obtain
\begin{align*}
    (i) \le C(b_2) \|\nabla (\pi_h^0 \mu - \mu)\|_{L^2(L^2)}^2 \le C'(b_2) h^4 \|\mu\|_{L^2(H^3_p)}^2. 
\end{align*}
For the second term, we use a triangle inequality,  the error bounds \eqref{eq:h1porjest} for the $H^1$-projection, the interpolation error estimate \eqref{eq:timinterpest}, and the lower bound \eqref{eq:lower_bound_rel} for the relative energy. In summary, this leads to 
\begin{align*}
    (ii) &\le C(b_3) \|\phi_{h,\tau} - \phi\|_{L^2(L^6)}^2 \|\mu\|_{L^\infty(W^{1,3}_p)}^2 \\
    &\le C(b_3) \|\mu\|_{L^\infty(W^{1,3}_p)} (h^4 \|\phi\|_{L^2(H^3_p)}^2 + \tau^4 \|\phi\|_{H^2(H^1_p)}^2 + \gamma \E_\alpha(\phi_{h,\tau}|\hat \phi_{h,\tau})),
\end{align*}
For the third term, we observe that this is a second order approximation on the midpoint of the time interval and using the estimate \eqref{eq:l2timeproduct} we obtain
\begin{align*}
    (iii) \le C\tau^4\norm{b(\phi)\na\mu}_{H^2(L^2)}^2 \leq C(b_2,b_3,\norm{\mu}_{L^\infty(W^{1,3}_p)}) \tau^4 (\|\mu\|_{H^2(H^1_p)}^2 + \norm{\phi}_{H^2(H^1)}^2).
\end{align*}
By combination of the previous estimates, we thus obtain 
\begin{align*}
    &\int_{t^{n-1}}^{t^n} \|\bar r_{1,h,\tau}\|^2_{-1,h} ds 
    \le C_0(\phi,\mu)\tau^4 + C_1(\phi,\mu)h^4 + C(b,\mu) \int_{t^{n-1}}^{t^n}  \E_\alpha(\phi_{h,\tau}(s)|\hat \phi_{h,\tau}(s)) ds,
\end{align*}
with $C_0=C(\|\phi\|_{H^2(H^1_p)},\|\mu\|_{H^2(H^1_p)})$, $C_1(\phi,\mu)=C(\|\dt \phi\|_{L^2(H^1_p)},\|\mu\|_{L^2(H^3_p)}, \|\phi\|^2_{L^2(H^3_p)})$ and constant $C(b,\mu)=C(b_3,\|\mu\|_{L^\infty(W^{1,3}_p)})$ independent of $h$ and $\tau$.

Before turning to the bound for the second residual, let us observe that
\begin{align} \label{eq:aux}
\int_{t^{n-1}}^{t^n}     \la \nabla (\hat \phi_{h,\tau} - \I_\tau^1 \phi), \nabla \bar w_{h,\tau} \ra \, ds
&= \int_0^{t^n} \la \I_\tau^1 \phi - \hat \phi_{h,\tau}, \bar w_{h,\tau}\ra \, ds,
\end{align}
which follows from the definition of $\hat \phi_{h,\tau}$ and the variational characterization \eqref{eq:hatphih} of $\pi_h^1$.
The second residual can then be expressed equivalently in strong form as 
\begin{align*}
    \bar r_{2,h,\tau} = (\overline{\pi_h^0 \mu} - \overline{\I_\tau^1 \pi_h^0 \mu}) + (\overline{\I_\tau^1 \phi} - \overline{\hat \phi_{h,\tau}}) + (\overline{f'(\hat \phi_{h,\tau})} - \overline{\I_\tau^1 f'(\phi)}),
\end{align*}
where $\overline{g} = \bar \pi_\tau^0 g$ denotes the piecewise constant projection of $g$ with respect to time. 
This pointwise representation allows us to estimate
\begin{align*}
\int_{t^{n-1}}^{t^n} \|\bar r_{2,h,\tau}\|_1^2 ds &\leq \|\pi_h^0 \mu - \I_\tau^1 \pi_h^0 \mu\|^2_{L^2(H^1_p)} + \|\I_\tau^1 \phi - \hat \phi_{h,\tau}\|^2_{L^2(H^1_p)} \\
& \qquad \qquad \qquad + \|f'(\hat \phi_{h,\tau}) - \I_\tau^1 f'(\phi)\|^2_{L^2(H^1_p)}  
= (i) + (ii) + (iii) .   
\end{align*}
We again estimate the individual terms separately. 
For the first, we use the contraction property of the $L^2$-projection $\pi_h^0$ and the interpolation error estimate \eqref{eq:timinterpest} to obtain 
\begin{align*}
    (i) \le \|\mu - \I_\tau^1 \mu\|_{L^2(H^1_p)}^2 \le C \tau^4 \|\mu\|_{H^2(H^1_p)}^2.
\end{align*}
For the second term, we employ the error estimate \eqref{eq:h1porjest} for the $H^1$-projection $\pi_h^1$ to get 
\begin{align*}
    (ii) \le \|\phi - \pi_h^1 \phi\|_{L^\infty(H^1_p)}^2 \le C h^4 \|\phi\|_{L^\infty(H^3_p)}^2.
\end{align*}
For the third term, we use the fact that 
$\phi$ and its discrete counter part $\hat\phi_{h,\tau} = \I_\tau^1 \phi_h^1 \phi$ can be uniformly bounded in $L^\infty(0,T;W^{1,\infty}_p(\Omega))$.
Therefore, all terms $f^{(k)}(\cdot)$ appearing in the following can be bounded uniformly by a constant $C(f)$.
This leads to 
\begin{align*}
    (iii) &\le \|f'(\hat \phi_{h,\tau}) - f'(\phi)\|_{L^2(H^1_p)}^2 + \|f'(\phi) - \I_\tau^1 f'(\phi)\|_{L^2(H^1_p)}^2 \\
    &\le C_1(f) \|\hat\phi_{h,\tau} - \phi\|_{L^2(H^1_p)}^2 + \tau^4\|f'(\phi)\|_{H^2(H^1_p)}^2 \\ 
    &\le C(f) (h^4 \|\phi\|_{L^2(H^3_p)} + \tau^4 \|\phi\|_{H^2(H^1_p)}) + \tau^4\|f'(\phi)\|_{H^2(H^1_p)}^2.
\end{align*}
A quick inspection of the last term shows that its evaluation involves up to cubic products of $\phi$ and its derivatives, with the highest order terms given by
\def\dtt{\partial_{tt}}
$\phi^2\dtt \nabla \phi$, $(\dt \phi)^2 \nabla \phi$, and $\phi\dt \phi \nabla \dt \phi$, respectively.
This allows to establish the following bounds 
\begin{align*}
\|f'(\phi)\|_{H^2(H^1_p)} 
& \leq C(f) (1+\|\phi\|_{H^2(H^1)} +  \|\phi\|_{H^1(H^3)})^3.
\end{align*}
In summary, the second residual can thus be bounded by
\begin{align*}
\int_{t^{n-1}}^{t^n} \|\bar r_{2,h,\tau}\|_1^2 ds 
\le C_1(\phi,\mu) \tau^4 + C_2(\phi) h^4, 
\end{align*}
with solution dependent constants $C_2(\phi,\mu) = C(\|\mu\|_{H^2(H^1_p)},\|\phi\|_{H^2(H^1_p)},  \|\phi\|_{H^1(H^3_p)})$ and $C_3(\phi) = C(\|\phi\|_{L^\infty(H^3_p)}, \|\phi\|_{L^2(H^3_p)})$ independent of $h$ and $\tau$.
\end{proof}

\subsection{Error estimates}

Together with the discrete stability estimate of Lemma~\ref{lem:fullstab} and a Gronwall-type argument, similar as already used in the proof of that result, 
we can now obtain the following convergence rate estimates. 
\begin{theorem}
\label{thm:fulldisk}
Let $(\phi,\mu)$ 
denote a regular periodic weak solution of \eqref{eq:ch1}--\eqref{eq:ch2} with initial value $\phi_0\in H^3_p(\Omega)$ satisfying additionally
\begin{align*}
    \phi&\in H^{2}(0,T;H^1_p(\Omega))\cap H^1(0,T;H^3_p(\Omega)), \\ 
    \mu& \in H^2(0,T;H^1_p(\Omega))\cap  L^\infty(0,T;W^{1,3}_p(\Omega)),
\end{align*}
and let $(\phi_{h,\tau},\bmu_{h,\tau})$ be a solution of \eqref{eq:pg1}--\eqref{eq:pg2} with initial value $\phi_{h,\tau}(0) = \pi_h^1 \phi_0$. 
Then 
\begin{align*}
    \max_{t^n\in\mathcal{I}_\tau}\norm{\phi_{h,\tau}(t^n) - \phi(t^n)}_{1}^2 + \norm*{\bmu_{h,\tau} - \bar\mu}_{L^2(0,T;H^1_p)}^2 \leq C'_{T}(h^4 + \tau^4),
\end{align*}
with $C'_T$ depending on the norms of the solution $(\phi,\mu)$, but independent of $h$ and $\tau$.
\end{theorem}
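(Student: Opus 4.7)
The plan is to specialize the discrete stability estimate of Lemma~\ref{lem:fullstab} to the canonical projections $(\hat\phi_{h,\tau},\hbmu_{h,\tau})=(\I_\tau^1\pi_h^1\phi,\bar\pi_\tau^0\pi_h^0\mu)$ from \eqref{eq:fullproj}, to use Lemma~\ref{lem:pqresidualest} for the residuals, to close the resulting inequality by discrete Gronwall, and finally to invoke the triangle inequality together with projection error estimates. Since the initial condition is chosen as $\phi_{h,\tau}(0)=\pi_h^1\phi_0=\hat\phi_{h,\tau}(0)$, the initial relative energy on the right-hand side of \eqref{eq:disc_stability} vanishes. Before applying Lemma~\ref{lem:fullstab} I would first check its hypotheses: the uniform $L^\infty(H^1)\cap L^2(H^1)$ bound for $(\phi_{h,\tau},\bmu_{h,\tau})$ comes from the discrete energy-dissipation identity, that for $(\hat\phi_{h,\tau},\hbmu_{h,\tau})$ from stability of $\pi_h^1$, $\pi_h^0$ and the assumed regularity of $(\phi,\mu)$, and the bound on $\|\dt\hat\phi_{h,\tau}\|_{L^\infty(L^2)}$ from the identity $\dt\hat\phi_{h,\tau}=\bar\pi_\tau^0\,\dt\pi_h^1\phi$ together with $\phi\in H^1(0,T;H^3_p)$.

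Next, I would insert the residual bounds of Lemma~\ref{lem:pqresidualest} into \eqref{eq:disc_stability} to obtain
\begin{align*}
\E_\alpha(\phi_{h,\tau}(t^n)|\hat\phi_{h,\tau}(t^n))
+\int_0^{t^n}\!\!\D_{\phi_{h,\tau}}(\bmu_{h,\tau}|\hbmu_{h,\tau})\,ds
\le C_T(h^4+\tau^4)+C_T\!\int_0^{t^n}\!\!\beta(s)\,\E_\alpha(\phi_{h,\tau}(s)|\hat\phi_{h,\tau}(s))\,ds,
\end{align*}
with $\beta(s)=\|\mu(s)\|_{W^{1,3}_p}^2$, which is integrable in time by the embedding $H^2(H^1_p)\cap L^\infty(W^{1,3}_p)\hookrightarrow L^2(W^{1,3}_p)$. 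The last term is absorbed via the discrete Gronwall inequality \eqref{eq:discgronwall} (applied exactly as at the end of the proof of Lemma~\ref{lem:fullstab}), yielding
\begin{align*}
\E_\alpha(\phi_{h,\tau}(t^n)|\hat\phi_{h,\tau}(t^n))+\int_0^{t^n}\D_{\phi_{h,\tau}}(\bmu_{h,\tau}|\hbmu_{h,\tau})\,ds\ \le\ C'_T(h^4+\tau^4).
\end{align*}

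Converting this to norm bounds via \eqref{eq:lower_bound_rel} and \eqref{eq:lower_bound_rel_dis} gives
\begin{align*}
\|\phi_{h,\tau}(t^n)-\hat\phi_{h,\tau}(t^n)\|_1^2+\|\na\bmu_{h,\tau}-\na\hbmu_{h,\tau}\|_{L^2(L^2)}^2\le C'_T(h^4+\tau^4).
\end{align*}
Since $\I_\tau^1$ interpolates at nodes, $\hat\phi_{h,\tau}(t^n)=\pi_h^1\phi(t^n)$, so by \eqref{eq:h1porjest} the projection error satisfies $\|\phi(t^n)-\hat\phi_{h,\tau}(t^n)\|_1\le C h^2\|\phi(t^n)\|_3$, and by \eqref{eq:l2projest}, \eqref{eq:timprojest} together with the gradient-of-$L^2$-projection argument used in Lemma~\ref{lem:projerr} we obtain $\|\bar\mu-\hbmu_{h,\tau}\|_{L^2(H^1_p)}\le C(h^2+\tau^2)$. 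A triangle inequality then delivers the asserted estimate.

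The main obstacle is really a combination of two issues that are both addressed by the preparatory lemmas: (i) the nonlinear contribution $\int_0^{t^n}\E_\alpha\,ds$ in the residual bound for $\bar r_{1,h,\tau}$, which forces the use of Gronwall at the fully discrete level and requires the time step restriction $\tau\le\tau_0$ through Lemma~\ref{lem:fullstab}; and (ii) the need to control $\bmu_{h,\tau}-\bar\mu$ in the full $H^1$-norm rather than only the $L^2$-norm of gradients. For the latter the zero-mean component must be recovered by testing \eqref{eq:pg2} with $\bar w_{h,\tau}=1$ and comparing with the corresponding identity for $\bar\mu$, exactly mirroring the estimate of the term (ii) in the proof of Theorem~\ref{thm:main}, which uses the polynomial bound on $f'$ and the $L^\infty(H^1)$ control on $\phi_{h,\tau}$, $\hat\phi_{h,\tau}$.
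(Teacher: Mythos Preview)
Your proposal is correct and matches the paper's approach: specialize Lemma~\ref{lem:fullstab} to the projections \eqref{eq:fullproj}, insert the residual bounds of Lemma~\ref{lem:pqresidualest}, absorb the remaining $\int_0^{t^n}\E_\alpha\,ds$ term via a discrete Gronwall step (with the same $\tau\le\tau_0$ restriction), and conclude by the triangle inequality together with the projection error estimates. The only cosmetic difference is that the paper re-enters the proof of Lemma~\ref{lem:fullstab} at the per-time-step inequality and applies Gronwall once, whereas you cite the lemma's conclusion and run Gronwall a second time; your explicit remark on recovering the full $H^1$-norm of $\bmu_{h,\tau}-\bar\mu$ is a point the paper leaves implicit.
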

\begin{proof}
We may proceed almost verbatim to the proof of Lemma~\ref{lem:fullstab} and insert the above estimates for the residual terms, to see that
\begin{align*}
 \E_\alpha(\phi_{h,\tau}|\hat \phi_{h,\tau})&\big|_{t^{n-1}}^{t^{n}} + \int_{t^{n-1}}^{t^{n}} \D_{\phi_{h,\tau}(s)}(\bmu_{h,\tau}(s)|\hbmu_{h,\tau}(s))ds \\
 &\leq C_1' \tau^4 + C_2' h^4 + \int_{t^{n-1}}^{t^{n}} C(b,\mu) \E_\alpha(\phi_{h,\tau}(s)|\hat \phi_{h,\tau}(s)) ds, 
\end{align*}
with $C(b,\mu)= c_0 + c_1 \|\dt \hat \phi_{h,\tau}\|_{L^\infty(L^2)} + c_2 \|\mu\|_{L^\infty(W^{1,3}_p)}$ bounded uniformly in time.
The proof of the assertion then follows in the same manner as that of Lemma~\ref{lem:fullstab}. 
Note that it suffices to consider the case that $\tau \le \tau_0$ is sufficiently small, since for large $\tau$ the result already follows from the a-priori estimates . 
\end{proof}

\subsection{Uniqueness of the fully discrete solution}
\label{sec:unique_full}

Using the previous estimates, we now show that uniqueness of the fully discrete solution can be obtained under a mild restriction on the time step size.
We start with the observation that under the conditions of the previous theorem, $\bmu_{h,\tau}$ is uniformly bounded in $L^\infty(W^{1,3}_p)$. To see this, note that
\begin{align*}
\|\nabla \bmu_{h,\tau}\|_{L^\infty(W^{1,3}_p)} 
&\le \|\bmu_{h,\tau} - \pi_h^0 \bar \mu\|_{L^\infty(W^{1,3}_p)} + \|\pi_h^0 \bar \mu - \bar \mu\|_{L^\infty(W^{1,3}_p)} + \|\bar \mu\|_{L^\infty(W^{1,3}_p)}.
\end{align*}
The last two terms are uniformly bounded by assumption and standard projection error estimates. For the first term on the right hand side, we use the second of the inverse inequalities \eqref{eq:inverse} with $p=3,q=2,d\le3$ in space and with $p=\infty,q=2,d=1$ in time, 
as well as the convergence estimates of the previous theorem, to see that 
\begin{align*}
\|\bmu_{h,\tau} - \pi_h^0 \bar \mu\|_{L^\infty(W^{1,3}_p)} 
&\le C \tau^{-1/2} h^{-1/2}  \|\bmu_{h,\tau} - \pi_h^0 \bar \mu\|_{L^2(H^1_p)} 
\le C_1' h^{-1/2}\tau^{3/2} +  C_2' h^{3/2} \tau^{-1/2}.
\end{align*}
For any choice $c h^{3} \le \tau \le C h^{1/3}$, one can thus conclude that $\|\bmu_{h,\tau}\|_{L^\infty(W^{1,3}_p)} \le C'$.

Uniqueness of the discrete solution can now be deduced as follows:
Let the assumptions of Theorem~\ref{thm:fulldisk} be valid. Furthermore, let $(\phi_{h,\tau},\bmu_{h,\tau})\in \WW_{h,\tau}(0,T)\times \QQ_{h,\tau}(0,T)$ and $(\hat\phi_{h,\tau},\hbmu_{h,\tau})\in \WW_{h,\tau}(0,T)\times \QQ_{h,\tau}(0,T)$ denote two solutions of Problem \ref{prob:full} with the same initial data $\phi_{h,\tau}(0)=\hat\phi_{h,\tau}(0)$ and with time step size $c h^{3} \le \tau \le C h^{1/3}$ and $\tau \le \tau_0$ sufficiently small. 
Then the residuals defined by \eqref{eq:discpert1}--\eqref{eq:discpert2} are $\bar r_{2,h,\tau}=0$ and
\begin{align*}
    \int_{t^{n-1}}^{t^{n}} \la \bar r_{1,h,\tau},\bar v_{h,\tau} \ra ds =\int_{t^{n-1}}^{t^{n}}\la (b(\phi_{h,\tau})-b(\hat\phi_{h,\tau}))\nabla\hbmu_{h,\tau},\bar v_{h,\tau} \ra ds, 
\end{align*}
for all $\bar v_{h,\tau}\in \Pi_0(t^{n-1},t^{n};\Vh)$. 
Using the bounds for the coefficients and \eqref{eq:lower_bound_rel}, 
the residual term $\bar r_{1,h,\tau}$ can be further estimated by
\begin{align*}
   \int_{t^{n-1}}^{t^{n}} \|\bar r_{1,h,\tau}\|_{-1,h}^2 ds
   \leq C(b_3) \|\hbmu_{h,\tau}\|^2_{L^\infty(W^{1,3}_p)} \int_{t^{n-1}}^{t^n} \E(\phi_{h,\tau}(s)|\hat\phi_{h,\tau}(s))ds.
\end{align*}
The last term can now be handled by a Gronwall-type argument, similar as in the proof of Lemma~\ref{lem:fullstab} and Theorem~\ref{thm:fulldisk}. Together with $\phi_{h,\tau}(0)=\hat \phi_{h,\tau}(0)$, we thus obtain
\begin{align*}
 \E_\alpha(\phi_{h,\tau}(t^n)|\hat \phi_{h,\tau}(t^n)) + \int_0^{t^n} \D_{\phi_{h,\tau}}(\bmu_{h,\tau}|\hbmu_{h,\tau}) ds
 & \le 0.
 \end{align*}
By the lower bounds \eqref{eq:lower_bound_rel} and \eqref{eq:lower_bound_rel_dis} for the relative energy and dissipation terms, this implies that $\nabla \bmu_{h,\tau} \equiv \nabla \hbmu_{h,\tau}$ and $\phi_{h,\tau}(t^n) = \hat\phi_{h,\tau}(t^n)$ for all $n$, from which one can deduce that $\phi_{h,\tau} \equiv \hat \phi_{h,\tau}$ and $\bmu_{h,\tau} \equiv \hbmu_{h,\tau}$.

\begin{remark}
A brief inspection of the arguments reveal, that the regularity assumptions on the true solution could be somewhat relaxed, which will however lead to tighter bounds $c h^{\alpha} \le \tau \le C h^{1/\alpha}$ with $1 \le \alpha \le 3$ for the admissible time step sizes. 
The choice $\tau = c h$, seems reasonable and leads to a uniqueness result under minimal regularity assumptions.
If the mobility function $b(\phi) \equiv b$ is independent of the concentration, then the above considerations become obsolete, since the relevant terms in the stability estimate vanish. 
\end{remark}

\section{Numerical validation} \label{sec:6}

For illustration of our theoretical results, in particular, of the convergence rate estimates of Theorem~\ref{thm:main2} and \ref{thm:fulldisk}, we report in this section about some numerical results for a typical test problem, which is specified as follows: 
We choose a polynomial potential
\begin{align*}
    f(\phi) = 0.3(\phi-0.99)^2(\phi+0.99)^2,
\end{align*}
define the mobility function
\begin{align*}
    b(\phi) = (1-\phi)^2(1+\phi)^2 + 10^{-3},
\end{align*}
and choose $\gamma = 0.003$ as the interface parameter.
All assumptions (A1)--(A3) of Section~2 are thus satisfied. 
As computational domain, we use the unit cell $\Omega = (0,1)^2$, and the system \eqref{eq:ch1}--\eqref{eq:ch2} is complemented by periodic boundary conditions.
%
%
We finally choose
\begin{align*}
    \phi_0(x,y) = 0.2\sin\left( 4\pi x\right)\sin\left( 2\pi y \right) +0.2
\end{align*}
as initial value for the phase fraction. 
%

For all our computations, we use the fully discrete approximation of Problem~\ref{prob:full} on a sequence of uniformly refined triangulations $\Th$ and equidistant grids $\mathcal{I}_\tau $ in time. 
%
%
Some snapshot of the computed phase fraction $\phi_{h,\tau}$ are depicted in Figure \ref{fig:evo}.
\begin{figure}[ht!]
\centering
\bigskip 
\bigskip 
\footnotesize
\begin{tabular}{ccc}
t=0 & t=0.1 & t=0.2 \\
\includegraphics[trim={2.3cm 1.4cm 1.5cm 1.0cm},clip,scale=0.25]{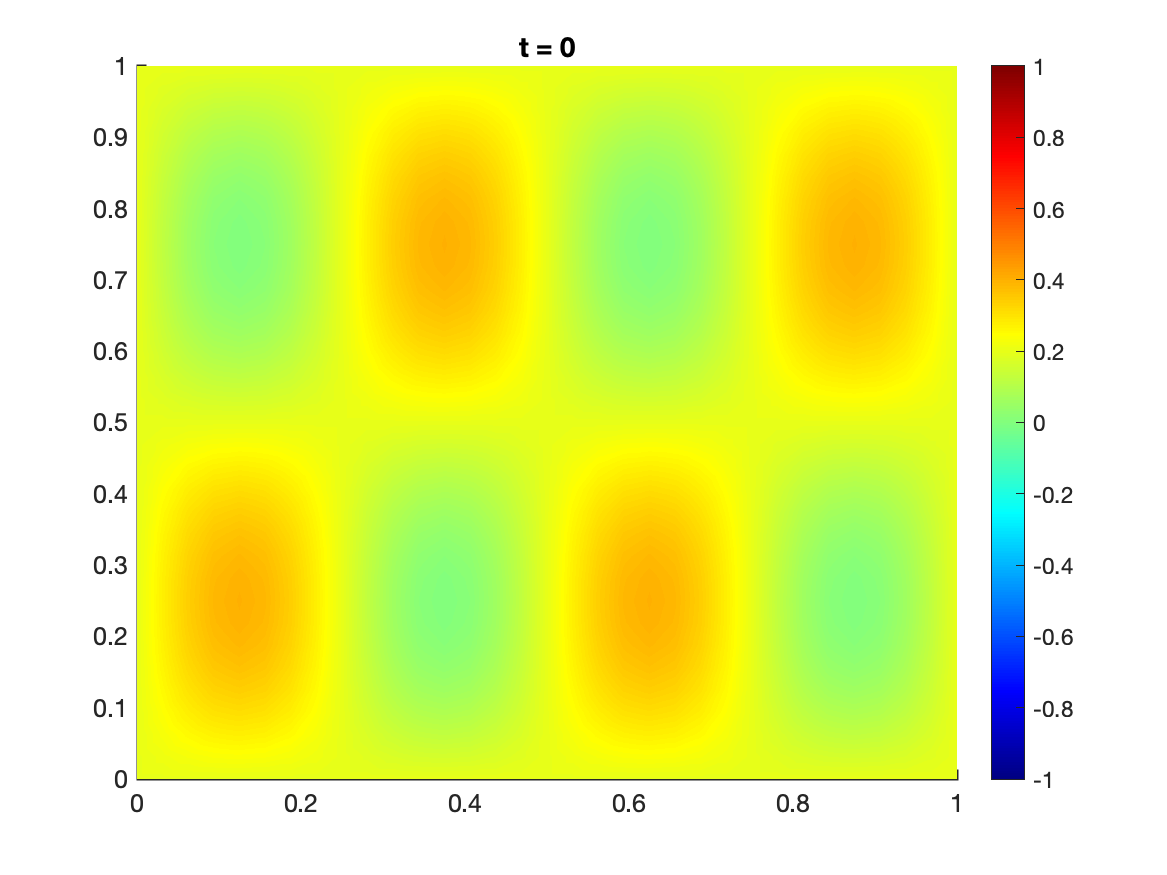} 
&
\includegraphics[trim={2.3cm 1.4cm 1.5cm 1.0cm},clip,scale=0.25]{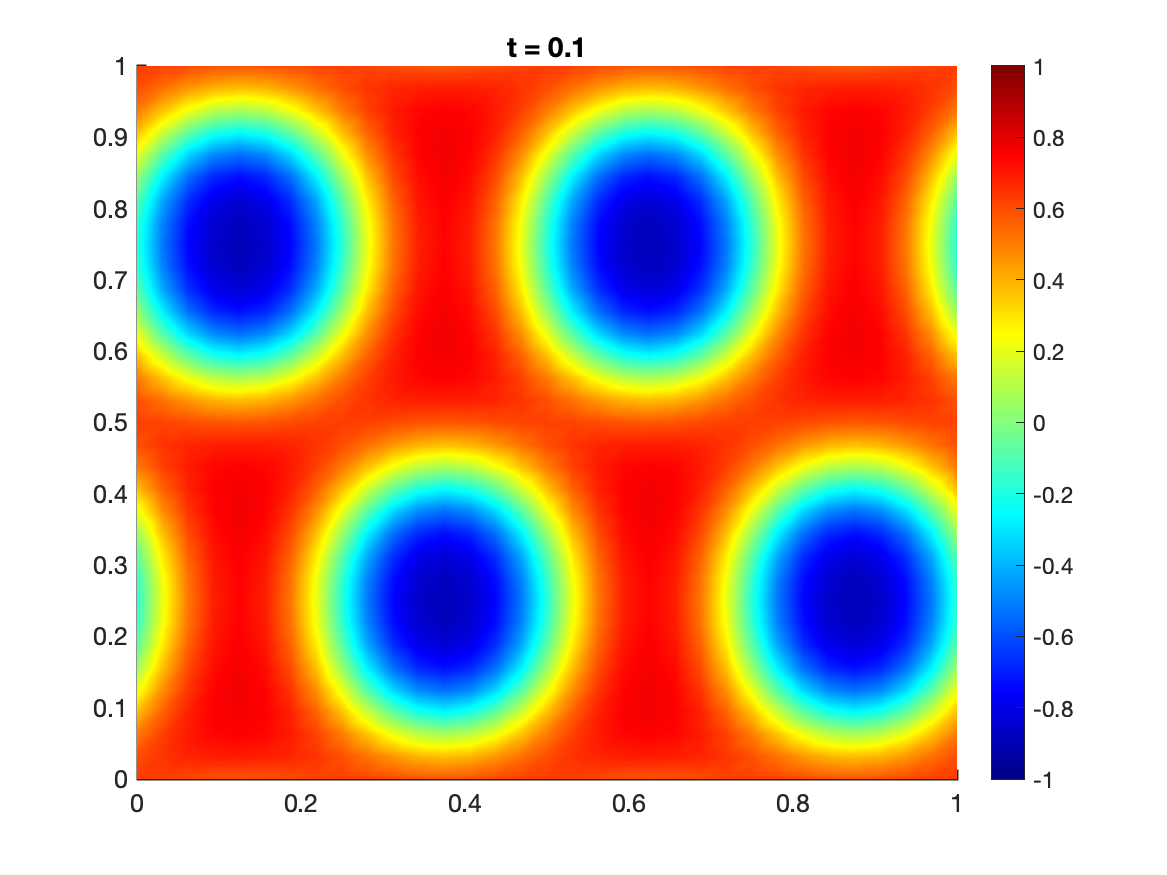}  
&
\includegraphics[trim={2.3cm 1.4cm 1.5cm 1.0cm},clip,scale=0.25]{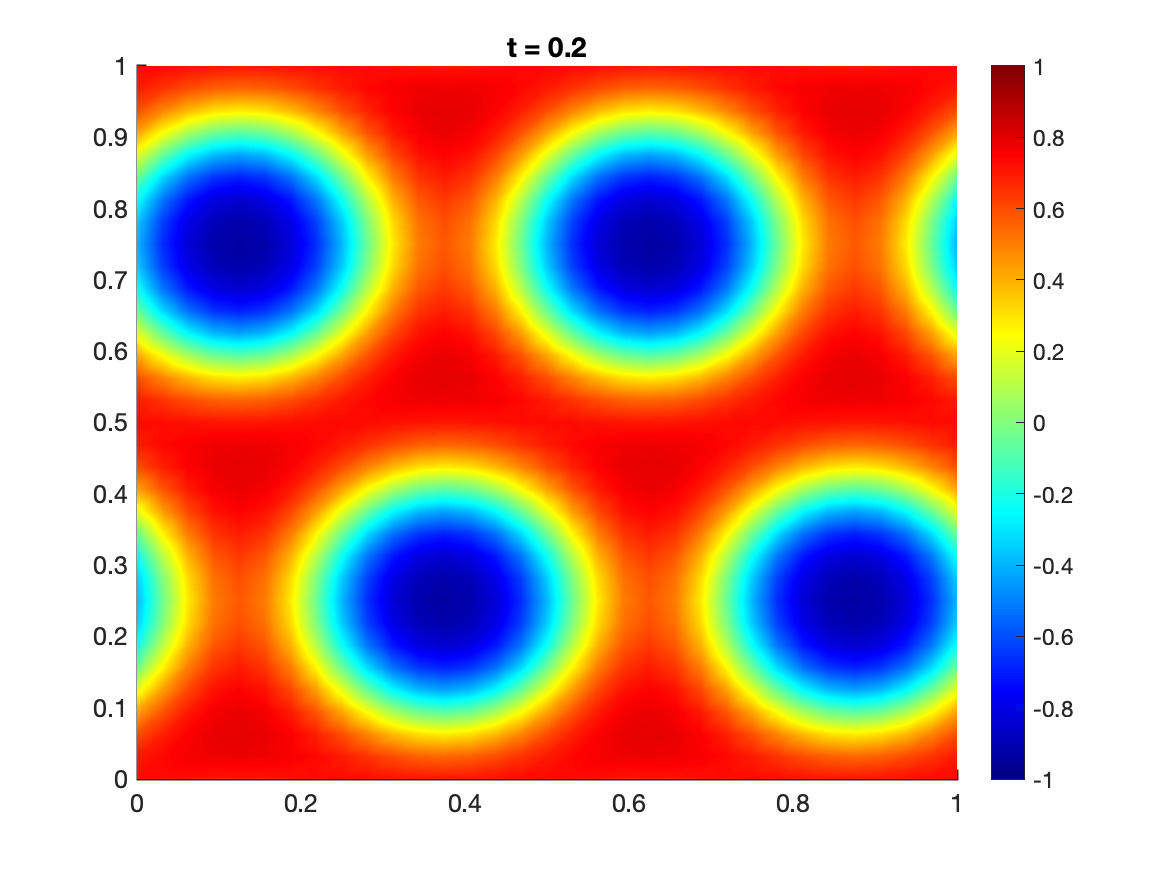} 
\\[1em]
\includegraphics[trim={2.3cm 1.4cm 1.5cm 1.0cm},clip,scale=0.25]{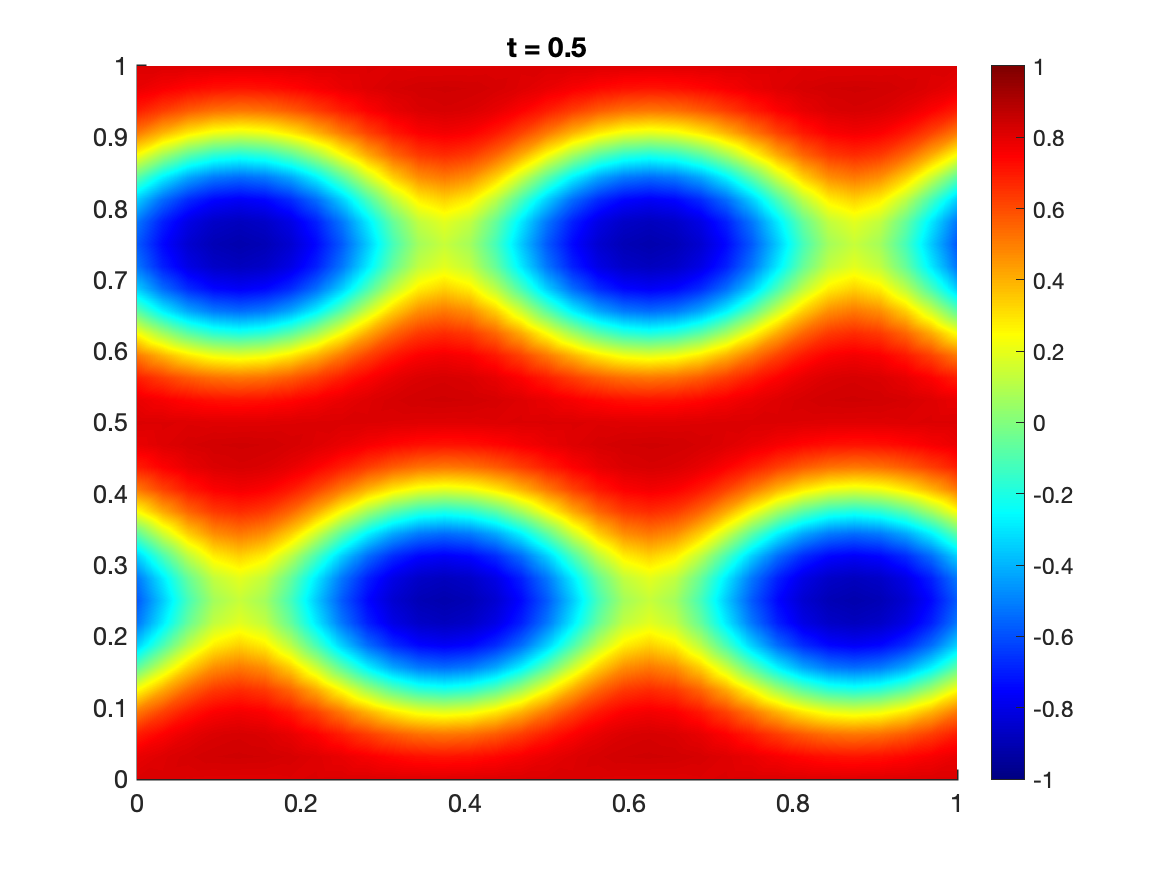}  
&
\includegraphics[trim={2.3cm 1.4cm 1.5cm 1.0cm},clip,scale=0.25]{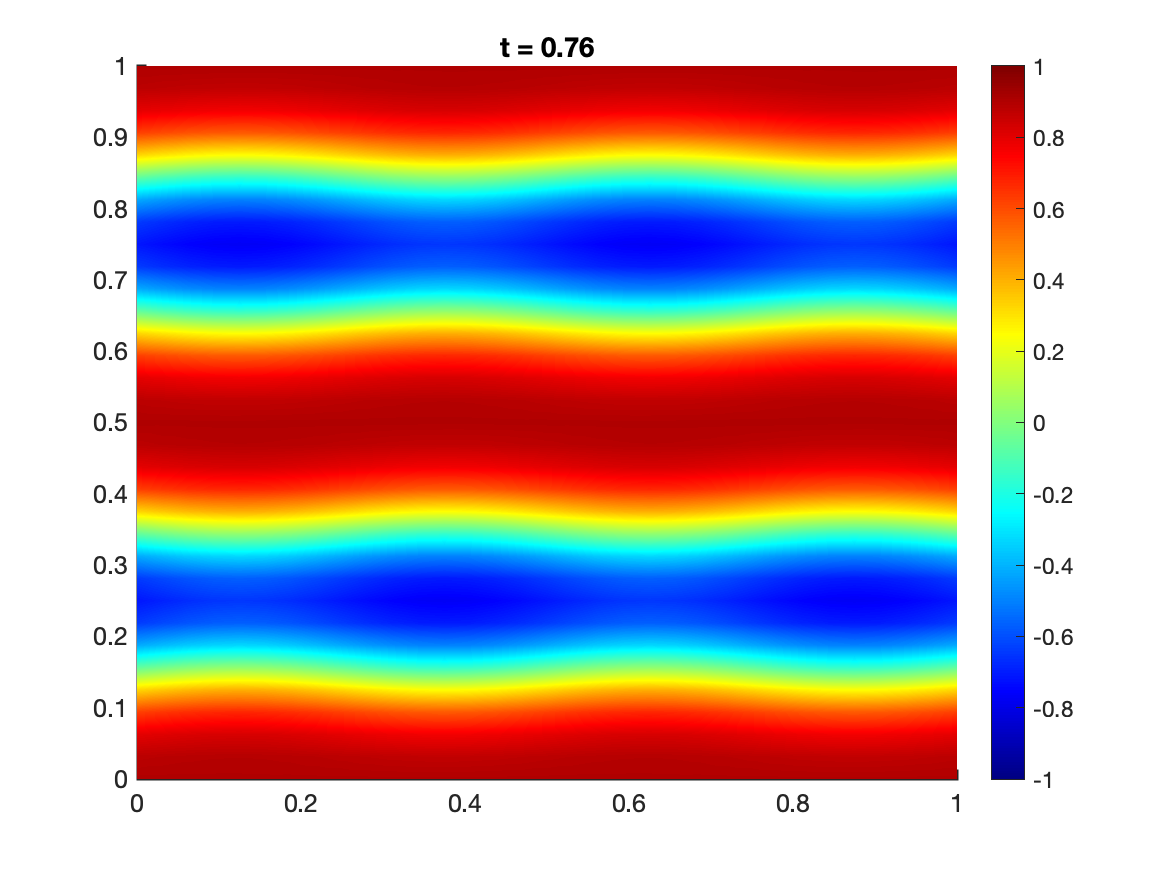} 
&
\includegraphics[trim={17.4cm 1.4cm 1.5cm 1.4cm},clip,scale=0.125]{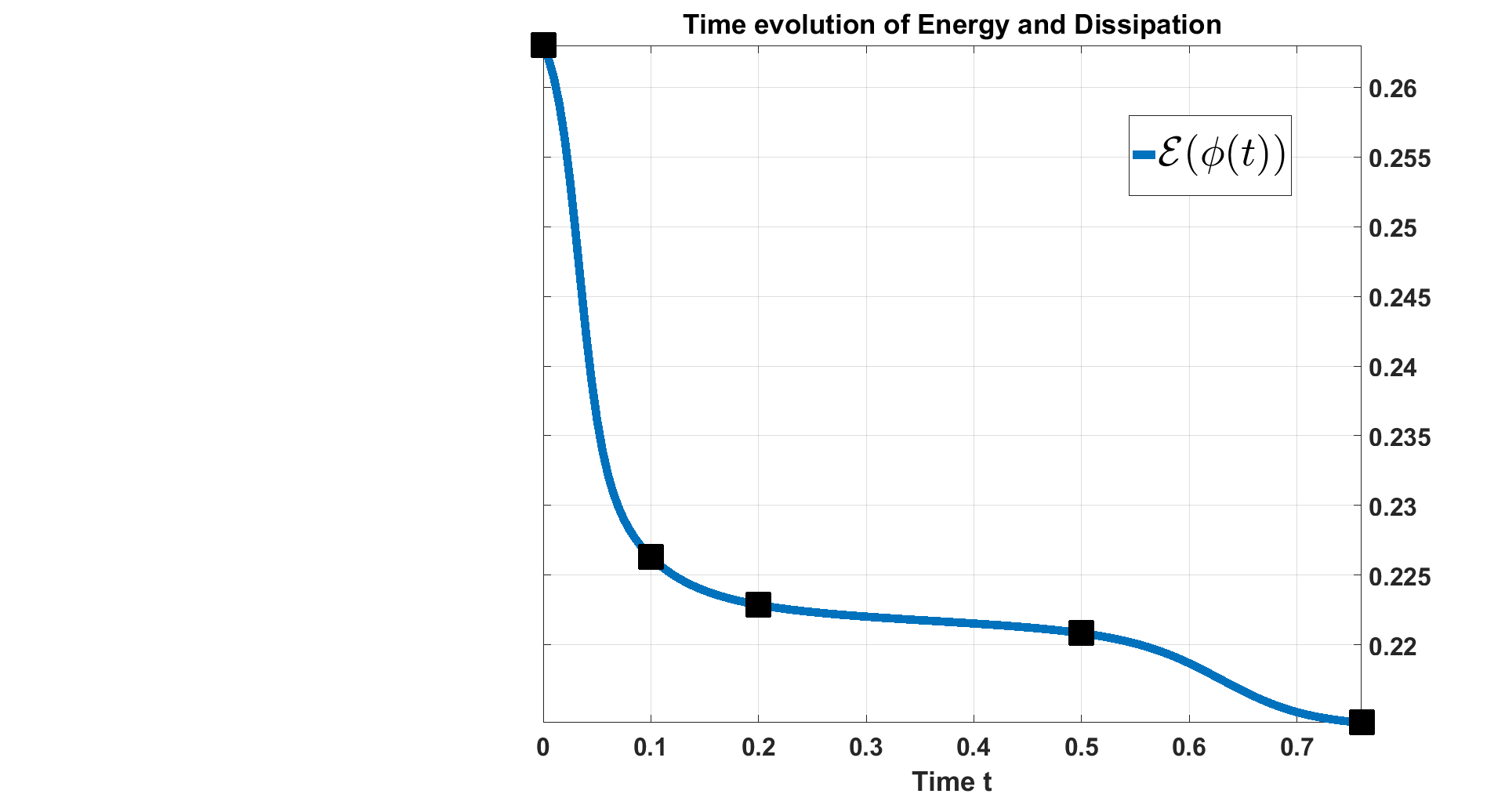} \\
t=0.5 & t=0.76 & energy \\
\end{tabular}
\caption{Snapshots of the phase fraction $\phi_{h,\tau}$ and evolution of the energy.\label{fig:evo}
}
\end{figure} 
One can clearly observe the expected evolution from a rather uniform distribution to an almost completely separated configuration.
As predicted by our theoretical results, the solution remains smooth over the whole time interval used for our simulations. 

We now turn to the convergence rates. Since no analytical solution is available, the discretization error is estimated by comparing the computed solutions $(\phi_{h,\tau},\bar\mu_{h,\tau})$ with those  computed on uniformly refined grids. 
The error quantities for the fully-discrete scheme are thus defined by 
\begin{align*}
e_{h,\tau} &= \max_{t^n\in\mathcal{I}_{\tau}}\norm{\phi_{h,\tau}(t^n) - \phi_{h/2,\tau/2}(t^n)}_{H^1_p}+ \norm*{\bmu_{h,\tau} - \bmu_{h/2,\tau/2}}_{L^2(0,T;H^1_p)}.
\end{align*}
In order to evaluate the convergence rates of the semi-discretization, we choose a very small step size $\tau^*$, and refer to $\phi_h := \phi_{h,\tau^*}$ as the semi-discrete approximation in the following. The corresponding error quantities are then defined as 
\begin{align*}
e_{h} &= \max_{t^n\in\mathcal{I}_{\tau}}\norm{\phi_{h,\tau^*}(t^n) - \phi_{h/2,\tau^*}(t^n)}_{H^1_p}+ \norm*{\bmu_{h,\tau^*} - \bmu_{h/2,\tau^*}}_{L^2(0,T;H^1_p)}.
\end{align*}
In Table~\ref{tab:rates}, we report the results of our computations obtained on a sequence of uniformly refined meshes with mesh size $h_k=2^{-(3+k)}$, $k=0,\ldots,3$ and time steps $\tau_k = 0.16 \cdot h_k$. For the results concerning the semi-discretization, the time step is chosen $\tau^*=0.16 \cdot 2^{-9}$.
Since nested grids are used in all our computations, the error quantities defined above can be computed exactly. 
\begin{table}[ht!]
\centering
\small
\begin{tabular}{c||c|c||c|c}
$ k $ & $ e_h $  & eoc & $e_{h,\tau}$ & eoc \\
\hline
$ 0 $   & $1.4794 \cdot 10^{-0}$  &   ---   & $1.5183\cdot 10^{-0}$ &  ---      \\
$ 1 $   & $3.7373 \cdot 10^{-1}$  & 1.98  & $3.7896\cdot 10^{-1}$ & 2.00     \\
$ 2 $   & $9.2554 \cdot 10^{-2}$  & 2.01  & $9.2797\cdot 10^{-2}$ & 2.02     \\
$ 3 $   & $2.3622 \cdot 10^{-2}$  & 1.97  & $2.3795\cdot 10^{-2}$ & 1.96     \\
$ 4 $   & $5.9391 \cdot 10^{-3}$  & 1.99  & $6.0902\cdot 10^{-3}$ & 1.96      
\end{tabular}
\bigskip
\caption{Errors and convergence rates for the computational results obtained with the semi-discrete and fully-discrete approximations. \label{tab:rates}} 
\end{table}
As usual, the experimental order of convergence (eoc) is computed by comparing comparing the errors of two consecutive refinements.
In perfect agreement with the theoretical predictions of Theorem~\ref{thm:main2} and \ref{thm:fulldisk}, we observe second order convergence for the errors. The proposed method thus is of second order in space and time.

\section{Discussion} \label{sec:7}

In this paper, we studied the stability, regularity, and uniqueness of solutions to the Cahn-Hilliard equation with concentration-dependent mobility. The variational characterization of weak solutions and relative energy estimates were used as the main ingredients of our analysis, and the latter greatly simplified the handling of nonlinear terms in the problem. 
The basic tools of our analysis are applicable almost verbatim to discretization schemes based on variational principles, i.e., Galerkin finite-element approximations in space and Petrov-Galerkin approximation in time. 
The variational time discretization, which is tightly related to the average vector field methods, leads to fully-implicit schemes which, however, can be solved efficiently by Newton-iterations, and which allows for a structured and transparent error analysis. 
The convergence results obtained in the paper are of optimal order and the result for the semi-discretization is sharp concerning regularity requirements of the solution. Some additional regularity is required for the fully-discrete scheme, which can be explained by the lack of strong stability of the Petrov-Galerkin tim discretization; see \cite{AndreevSchweitzer2014} for details. 
In principle, the proposed schemes can be extended immediately to higher order in space and time. Further investigations in this direction and the extension to more complex multiphase problems, e.g., the Cahn-Hilliard Navier-Stokes equations, will be topics of future research.

{\footnotesize

\section*{Acknowledgement}
Support by the German Science Foundation (DFG) via TRR~146:  \emph{Multiscale Simulation Methods for Soft Matter Systems}, project~C3, and SPP~2256: \emph{Variational Methods for Predicting Complex Phenomena in Engineering Structures and Materials}, project Eg-331/2-1 is gratefully acknowledged. M.L.\ is grateful to the Gutenberg Research College, University Mainz for supporting her research.}

\bigskip

\bibliographystyle{abbrv}
\bibliography{relenergy}

\newpage

\appendix

\section{Gronwall lemmas}

Let us start with recalling the following classical version of Gronwall's lemma.
\begin{lemma} \label{lem:gronwall}
Let $T>0$, $v,g \in C[0,T]$ and $\lambda \in L^1(0,T)$ be given. Further assume that 
\begin{align*}
    v(t) \le g(t) + \int_0^t \lambda(s) v(s) ds, \quad 0 \le t \le T,
\end{align*}
and that $\lambda(t) \ge 0$ for a.a. $0 \le t \le T$. Then
\begin{align} \label{eq:gronwall}
    v(t) \le g(t) + \int_0^t g(s) \lambda(s) e^{\int_s^t \lambda(r) dr} ds, \qquad 0 \le t \le T.
\end{align}
\end{lemma}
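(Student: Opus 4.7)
The plan is to reduce the integral inequality to a differential inequality for the antiderivative and then apply the integrating factor trick. Introduce
$$
w(t) := \int_0^t \lambda(s) v(s)\,ds,
$$
so that the hypothesis becomes simply $v(t) \le g(t) + w(t)$. Since $v\in C[0,T]$ is bounded and $\lambda\in L^1(0,T)$, the product $\lambda v$ lies in $L^1(0,T)$, and hence $w$ is absolutely continuous on $[0,T]$ with $w(0)=0$ and $w'(t)=\lambda(t)v(t)$ for a.a. $t$.

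Next I would substitute the hypothesis into the derivative of $w$: using $\lambda(t)\ge 0$ and $v(t)\le g(t)+w(t)$, one obtains the linear differential inequality
$$
w'(t) - \lambda(t)w(t) \le \lambda(t) g(t) \qquad \text{for a.a. } 0<t<T.
$$
Multiplying by the integrating factor $e^{-\Lambda(t)}$ with $\Lambda(t):=\int_0^t \lambda(r)\,dr$ (which is absolutely continuous, being the integral of an $L^1$ function), the left hand side becomes an exact derivative, and one arrives at
$$
\frac{d}{dt}\!\left[e^{-\Lambda(t)} w(t)\right] \le \lambda(t) g(t) e^{-\Lambda(t)} \qquad \text{for a.a. } 0<t<T.
$$
Integrating this inequality from $0$ to $t$, using $w(0)=0$ and the fundamental theorem of calculus for absolutely continuous functions, yields
$$
e^{-\Lambda(t)} w(t) \le \int_0^t \lambda(s) g(s) e^{-\Lambda(s)}\,ds,
$$
and multiplication by $e^{\Lambda(t)}$ gives $w(t)\le \int_0^t \lambda(s) g(s) e^{\int_s^t \lambda(r)\,dr}\,ds$. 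Combining this with $v(t) \le g(t)+w(t)$ produces the claimed bound \eqref{eq:gronwall}.

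The only delicate point is the regularity: since $v$ is merely continuous and $\lambda$ merely integrable, the differentiation and the application of the integrating factor must be justified via absolute continuity rather than classical calculus. Once that observation is in place, the argument is entirely elementary, so I expect no real obstacle beyond being careful about ``for almost all $t$'' qualifiers throughout.
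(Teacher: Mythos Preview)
Your argument is correct and complete: the reduction to an absolutely continuous auxiliary function $w$, the use of $\lambda\ge 0$ to turn the hypothesis into a linear differential inequality, the integrating factor, and the appeal to the fundamental theorem of calculus for absolutely continuous functions are all handled properly. The paper itself does not give a proof but merely cites \cite[Ch.~29]{Wloka}, so there is nothing to compare against; your write-up is in fact the standard textbook proof of this classical version of Gronwall's inequality.
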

A proof can be found in \cite[Ch.~29]{Wloka}. 
A similar result also holds on the discrete level. 
\begin{lemma} \label{lem:discgronwall}
Let $(u_n)_n$, $(b_n)_n$, $(c_n)_n$, and $(\lambda_n)_n$ be given positive sequences, satisfying
\begin{align*}
    u_n + b_n \le e^{\lambda_n} u_{n-1} + c_n, \qquad n \ge 0.
\end{align*}
Then 
\begin{align} \label{eq:discgronwall}
    u_n + \sum_{k=1}^n e^{\sum_{j={k+1}}^{n} \lambda_j} b_k 
    \le e^{\sum_{j=1}^n \lambda_j} u_0 + \sum_{k=0}^n e^{\sum_{j={k+1}}^{n} \lambda_j} c_k, \quad n > 0. 
\end{align}
\end{lemma}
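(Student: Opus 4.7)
The plan is to prove the inequality by a simple integrating-factor/telescoping argument, which is the discrete analogue of multiplying the ODE inequality by $\exp\bigl(-\int \lambda\bigr)$ before integrating. For $0 \le k \le n$ introduce the abbreviation
\[
P_{k,n} := \exp\Bigl(\sum_{j=k+1}^{n} \lambda_j\Bigr),
\]
with the convention that an empty sum is zero, so $P_{n,n}=1$ and $P_{0,n}=e^{\sum_{j=1}^n \lambda_j}$. The key identity these factors satisfy is the one-step recursion $P_{k-1,n} = e^{\lambda_k}\,P_{k,n}$, which is exactly what is needed to cancel the exponential factor $e^{\lambda_k}$ appearing on the right-hand side of the hypothesis.

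First I would multiply the one-step assumption at index $k$,
\[
u_k + b_k \le e^{\lambda_k} u_{k-1} + c_k,
\]
by the non-negative quantity $P_{k,n}$. Using $e^{\lambda_k} P_{k,n} = P_{k-1,n}$ this yields
\[
P_{k,n}\,u_k + P_{k,n}\,b_k \;\le\; P_{k-1,n}\,u_{k-1} + P_{k,n}\,c_k, \qquad 1 \le k \le n.
\]
Summing these inequalities from $k=1$ to $k=n$ causes the $u$-terms on the two sides to telescope: all intermediate $P_{k,n}\,u_k$ cancel, leaving $P_{n,n}\,u_n$ on the left and $P_{0,n}\,u_0$ on the right. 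Collecting the remaining contributions gives
\[
u_n + \sum_{k=1}^{n} P_{k,n}\,b_k \;\le\; e^{\sum_{j=1}^{n}\lambda_j}\,u_0 + \sum_{k=1}^{n} P_{k,n}\,c_k,
\]
which is precisely the claimed bound once one observes that the sum on the right in the statement runs from $k=0$ and that the extra term $P_{0,n}\,c_0 \ge 0$ only makes the upper bound larger.

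There is no real obstacle here; the proof is pure bookkeeping, and the only delicate point is matching the index conventions in the statement (where the outer sum on the right starts at $k=0$) with those produced by the telescoping sum (which naturally starts at $k=1$). Positivity of $(c_k)$ and $(\lambda_k)$ makes this discrepancy harmless. An alternative, equally short presentation would be a direct induction on $n$: the base case $n=1$ reduces to the hypothesis, while the inductive step follows by applying the hypothesis to replace $u_n$ in terms of $u_{n-1}$ and then multiplying the inductive bound for $u_{n-1}$ by $e^{\lambda_n}$.
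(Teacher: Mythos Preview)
Your proof is correct; the paper itself gives no details beyond the single sentence ``The result follows immediately by induction,'' and your telescoping argument (together with the induction alternative you mention) is exactly the standard way to make that immediate. The only point worth noting is the index mismatch you already flagged: the statement's sum on the right starts at $k=0$, and the nonnegativity of $c_0$ absorbs the extra term.
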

\begin{proof}
The result follows immediately by induction.
\end{proof}

\section{Proof of Lemma~\ref{lem:l2timeproduct}} 
\label{sec_proj}

We start with considering a single element $J=(t^{n-1},t^n)$ and show that 
\begin{align} \label{eq:est}
\|\bar u \bar v - \overline{u v}\|_{0,p} \le C \tau^2 (\|u\|_{2,p} \|v\|_{1,\infty} + \|u\|_{1,\infty} \|v\|_{2,p}),
\end{align}
where $\|\cdot\|_{k,p} = \|\cdot\|_{W^{k,p}(J)}$ and $\bar a =\bar \pi^0_\tau a$ denotes the average of $a$ over $J$. 
In addition, we denote by $\tilde a = a(t^{n-1/2})$ the constant interpolant at $t^{n-1/2} = (t^n + t^{n-1})/2$. 
Then we have 
\begin{align*}
   \|\bar u \bar v - \overline{uv}\|_{0,p} 
   &\le \|\bar u \bar v - \tilde u \bar v\|_{0,p}
      + \|\tilde u \bar v - \tilde u \tilde v\|_{0,p}
      + \|\tilde u \tilde v - \widetilde{ u v}\|_{0,p}
      + \|\widetilde{u v} - \overline{u v}\|_{0,p} \\ 
  &= (i) + (ii) + (iii) + (iv).      
\end{align*}
In order to bound the individual terms, we utilize the super-closeness estimate
\begin{align} \label{eq:super}
    \|\bar a - \tilde a\|_{0,p} \le C \tau^2 \|a\|_{2,p},
\end{align}
which follows by observing that $\bar a - \tilde a = 0$ for $a \in P_1(J)$ and using the Bramble-Hilbert lemma and a scaling argument; see \cite{BrennerScott} for details. 
We can then estimate the first term in the above error expansion by 
\begin{align*}
(i) \le \|\bar u - \tilde u\|_{0,p} \|\bar v\|_{0,\infty} \le C \tau^2 \|u\|_{2,p} \|v\|_{0,\infty},
\end{align*}
and in a similar manner, we see that $(ii) \le C h^{k+2} \|u\|_{0,\infty} \|v\|_{k+2,p}$.
The third term vanishes identically, i.e., $(iii)=0$, and using \eqref{eq:super} again, the last term can be bounded by
\begin{align*}
(iv) \le C \tau^{2} \|uv\|_{2,p} 
\le C' \tau^{2} (\|u\|_{2,p} \|v\|_{1,\infty)}
+ \|u\|_{1,\infty}\|v\|_{2,p} ).
\end{align*}
This proves the estimate \eqref{eq:est} for one single element $J=(t^{n-1},t^n)$.
The global projection estimate \eqref{eq:l2timeproduct} then follows by summation over the elements and using the bounds for the continuous embedding $\|a\|_{L^\infty(0,T)} \le \|a\|_{W^{1,\infty}(0,T)} \le C \|a\|_{W^{2,p}(0,T)}$.
\qed

\section{Regularity}

We now discuss improved regularity results for the weak solution $(\phi,\mu)$ of (\ref{eq:ch1})-(\ref{eq:ch2}) and the initial data are given by $\phi_0\in H^k_p(\Omega)$, $k\in\{2,3\}$.
The basic argument relies on Galerkin approximation and uniform a-priori estimates, which are obtained by testing the discretized variational problems with approximations for higher order derivatives and using energy-type estimates and Gronwall-type inequalities. 

Using simplifications of the results and proofs presented in \cite{Boyer13}, one can see that for initial value $\phi_0\in H^2_p(\Omega)$ the Galerkin approximations $(\phi_N,\mu_N)$ satisfy
\begin{align} \label{eq:boyerreg}
\phi_N &\in L^\infty(0,T;H^2_p(\Omega))\cap L^2(0,T;H^4_p(\Omega)), \\
\dt\phi_N &\in L^2(0,T;L^2(\Omega)), \\
\mu_N &\in L^\infty(0,T;L^2(\Omega))\cap L^2(0,T;H^2_p(\Omega)), 
\end{align}
with uniform bounds for the respective norms, i.e., independent of the level $N$ of the approximation. This immediately leads to the bounds of Lemma \ref{lem:weak} for $k=2$. 
Let us note that in three space dimensions, the maximal time $T$ of validity has to be chosen sufficiently small, depending on the norm of the initial data, while in two space dimensions $T$ can be chosen arbitrary; we refer to \cite{Boyer13} for details.

Now assume that $\phi_0 \in H_p^3(\Omega)$. 
We may then test the Galerkin approximation of the weak formulation \eqref{eq:weak1} with $v_N=-\Delta^3\phi_N$, and obtain
\begin{equation*}
    \ddt \norm{\na\Delta\phi_N}_0^2 - \la \nabla\div(m(\phi_N)\na\mu_N) , \na\Delta^2\phi_N\ra = 0.
\end{equation*}
By elementary computations, one can verify that 
\begin{align*}
&\nabla\div(m(\phi_N)\na\mu_N) \\
&= b(\phi_N)\nabla\Delta\mu_N + 2b'(\phi_N)\nabla\phi_N\Delta\mu_N + b'(\phi_N)\Delta\phi_N\nabla\mu_N + b''(\phi_N)\snorm{\nabla\phi_N}^2\nabla\mu_N    \\
&=(i) + (ii) + (iii) + (iv)    .
\end{align*}
From the regularity result for $k=2$ and standard embedding results, we already know that $\phi_N$ is bounded in $L^\infty(0,T;L^\infty(\Omega))$. 
We can then decompose the first term by
\begin{equation*}
(i)= -b(\phi_N)\nabla\Delta^2\phi_N + \nabla\Delta(f'(\phi_N)), 
\end{equation*}
and further estimate the Laplacian of $f'$ by
\begin{align*}
\norm{\nabla\Delta(f'(\phi_N))}_0^2 &\leq  C(f^{(4)})\norm{\nabla\phi_N}_{0,6}^6 + C(f^{(3)})\norm{\nabla\phi_N\Delta\phi_N}_{0}^2 +  C(f^{(2)})\norm{\nabla\Delta\phi_N}_{0}^2  \\
&\leq  C(f^{(4)})\norm{\nabla\phi_N}_{0,6}^6 + C(f^{(3)})\norm{\nabla\Delta\phi_N}_{0,2}^2\norm{\nabla\phi_N}_{0,4}^2 +  C(f^{(2)})\norm{\nabla\Delta\phi_N}_{0}^2. 
\end{align*}
Using the improved bounds for $\phi_N$ for $k=2$, we can also estimate the other terms by
\begin{align*}
\la (ii), \nabla\Delta^2\phi_N \ra &\leq \delta\norm{\nabla\Delta^2\phi_N}_{0}^2   + C(b_3,\delta)\norm{\Delta\mu_N\nabla\phi_N}_{0}^2 \\
& \leq \delta\norm{\nabla\Delta^2\phi_N}_{0}^2  + C(b_3,\delta)\norm{\Delta\mu_N}_{0,4}\norm{\nabla\phi_N}_{0,4}^2 \\
& \leq 2\delta\norm{\nabla\Delta^2\phi_N}_{0}^2  + C(b_3,\delta)\norm{\Delta\mu_N}_{0,2}^2 \norm{\nabla\phi_N}_{0,4}^2 \\
& \qquad\qquad \qquad \qquad + C(b_3,\delta)\norm{\nabla\Delta(f'(\phi_N))}_{0}\norm{\nabla\phi_N}_{0,4}^2,\\
\la (iii), \nabla\Delta^2\phi_N \ra & \leq \delta\norm{\nabla\Delta^2\phi_N}_{0}^2   + C(b_3,\delta)\norm{\Delta\phi_N\nabla\mu_N}_{0}^2   \\
& \leq \delta\norm{\nabla\Delta^2\phi_N}_{0}^2   + C(b_3,\delta)\norm{\na\Delta\phi_N}_0^2\norm{\nabla\mu_N}_{0,4}^2,\\
\la (iv), \nabla\Delta^2\phi_N \ra & \leq \delta\norm{\nabla\Delta^2\phi_N}_{0}^2 + C(b_4,\delta)\norm{\snorm{\nabla\phi_N}^2\nabla\mu_N}_0^2 \\
& \leq \delta\norm{\nabla\Delta^2\phi_N}_{0}^2 + C(b_4,\delta)\norm{\nabla\phi_N}^4_{0,8}\norm{\nabla\mu_N}_{0,4}^2.
\end{align*}
Setting $y(t)=\norm{\nabla\Delta\phi_N(t)}_0^2$, a combination of the above estimates directly leads to
\begin{align*}
y(t) + (c_0b_1 - 4\delta)\int_0^t \norm{\nabla\Delta^2\phi_N}^2_0 \leq y(0)  + C\int_0^t g(s)y(s) ds + C\int_0^t h(s) ds,
\end{align*}
with $g(s) \leq C + \norm{\na\mu_N}_{0,4}^2 + \norm{\nabla\phi_N}_{0,4}^2$ and $h(s) \leq C + \norm{\na\mu_N}_{0,4}^2\norm{\Delta\phi_N}_2^4 + \norm{\nabla\phi_N}_{0,4}^4$.
From the improved regularity \eqref{eq:boyerreg} for $k=2$, 
one can deduce that $g,h$ are uniformly bounded in $L^1(0,T)$. 
Choosing $\delta$ sufficiently small and applying the Lemma~\ref{lem:gronwall} together with the previous estimates, now leads to
\begin{align}
\phi_N &\in L^\infty(0,T;H^3_p(\Omega))\cap L^2(0,T;H^5_p(\Omega)), \\
\mu_N &\in L^\infty(0,T;H^1_p(\Omega))\cap L^2(0,T;H^3_p(\Omega)),
\end{align}
with uniform bounds (independent of $N$) for the corresponding norms. A straight forward computation further shows that
\begin{align}
    \dt \phi_N \in L^2(0,T;H^1_p(\Omega)),
\end{align}
together with corresponding uniform bounds.
Taking the limit with $N \to \infty$, maybe after choosing a weakly convergent sub-sequence, shows that corresponding bounds also hold for the weak solution $u = \lim_N u_N$. Hence at least one regular weak solution $(\phi,\mu)$ exists satisfying the bounds of Lemma~\ref{lem:weak}. 
Let us emphasize that the bounds hold for all $T>0$ in two space dimensions, while $T>0$ has to be chosen sufficiently small, depending on the problem data, in three dimensions.

\section{Limiting process for the stability estimate}

For ease of notation, we denote the space-time cylinder by $\Omega_T:=\Omega\times(0,T)$ in the following.
Let $(\phi,\mu)\in \WW(0,T)\times\QQ(0,T)$ be given periodic weak solution of the Cahn-Hilliard system \eqref{eq:ch1}--\eqref{eq:ch2}. Then by the mollification procedure proposed by Meyers and Serrin \cite{MeyersSerrin64}, one can construct a sequence 
$(\phi_n,\mu_n)\in \WW(0,T)\cap C^\infty(\Omega_T) \times \QQ(0,T)\cap C^\infty(\Omega_T)$ of smooth approximations, such that
\begin{align*}
\phi_n \rightarrow \phi \text{ in } \WW(0,T)
\qquad \text{and} \qquad 
\mu_n \rightarrow \mu \text{ in } \QQ(0,T)
\qquad \text{with } n \to \infty.    
\end{align*}
Similar as in the proof of Theorem~\ref{thm:main}, we can define residuals $r_{1,n}$, $r_{2,n}$ such that 
\begin{align*}
\la \dt \phi_n(t),v \ra &+ \la b(\phi_n(t))\nabla\mu_n(t),\nabla v \ra =: \la  r_{1,n}(t),v \ra \\
\la \mu_n(t), w \ra &- \gamma\la \nabla\phi_n(t),\nabla w \ra - \la f'(\phi_n(t)),w \ra =: \la  r_{2,n}(t),w \ra
\end{align*}
for all test functions $v,w \in H^1_p(\Omega)$ and all $0 \le t \le T$. 
Since $(\phi,\mu)$ is a periodic weak solution of \eqref{eq:ch1}--\eqref{eq:ch2}, one can immediately see that
\begin{align*}
\la  r_{1,n},v \ra &= \la \dt \phi_n - \dt\phi,v \ra + \la b(\phi_n)\nabla\mu_n - b(\phi)\nabla\mu, \nabla v\ra \\
\la r_{2,n}, w\ra &= \la \mu_n - \mu, w \ra - \la \nabla(\phi_n-\phi),\nabla w \ra - \la f'(\phi_n)-f'(\phi),w \ra
\end{align*}
for all $v,w \in H^1_p(\Omega)$ and a.a. $0 \le t \le T$. From the convergence of $(\phi_n,\mu_n)$ to $(\phi,\mu)$ in the norms stated above, and the assumptions on the coefficients, one can deduce that %
\begin{equation*}
\lim_{n\to\infty}\norm{r_{1,n}}_{-1}^2 = 0, \quad \lim_{n\to\infty}\norm{r_{2,n}}_{1}^2 = 0.   
\end{equation*}
In a similar manner, we choose for given $(\hat\phi,\hat\mu)\in \WW(0,T)\cap W^{1,1}(0,T;L^2(\Omega)\times \QQ(0,T)$ a sequence of smooth approximations $(\hat\phi_m,\hat\mu_m)$ such that
\begin{align*}
 \hat\phi_m \rightarrow \hat\phi \text{ in } \WW(0,T)\cap W^{1,1}(0,T;L^2(\Omega)) \text{ and } \hat\mu_m \rightarrow \hat\mu \text{ in } \QQ(0,T)
\end{align*}
with $n \to \infty$, and define corresponding residuals
\begin{align*}
\la \dt \hat\phi_m,v \ra + \la m(\phi_n)\nabla\hat\mu_m,\nabla v \ra =: \la \tilde r_{1,m},v \ra \\
\la \hat\mu_m, w \ra - \gamma\la \nabla\hat\phi_m,\nabla w \ra - \la f'(\hat\phi_m),w \ra =: \la \tilde r_{2,m},w \ra
\end{align*}
for all $v,w \in H^1_p(\Omega)$ and all $0 \le t \le T$. 
Using \eqref{eq:weak1p}--\eqref{eq:weak2p}, we immediately deduce that 
\begin{align*}
\la  \tilde r_{1,m},v \ra &= \la \hat r_1,v \ra + \la \dt \hat\phi_m - \dt\hat\phi,v \ra + \la b(\phi_n)\nabla(\hat\mu_m -\hat\mu), \nabla v\ra \\
\la \tilde r_{2,m}, w\ra &=  \la \hat r_{2},w \ra +  \la \hat\mu_m - \hat\mu, w \ra - \la \nabla(\hat\phi_m-\hat\phi),\nabla w \ra - \la f'(\hat\phi_m)-f'(\hat\phi),w \ra
\end{align*}
for all $v,w \in H^1_p(\Omega)$ and a.a. $0 \le t \le T$. 
Using the convergence of $(\hat \phi_m,\hat \mu_m)$ towards $(\hat \phi,\hat \mu)$ in the corresponding norms and the assumptions on the parameters, one can see that
\begin{equation*}
\lim_{m\to\infty}\norm{\tilde r_{1,m} - r_{1,m}}_{-1}^2 = 0, \quad \lim_{m\to\infty}\norm{\tilde r_{2,m}- r_{2,m}}_{1}^2 = 0.   
\end{equation*}
With a slight adoption of the proof in Theorem~\ref{thm:main}, we now obtain the stability estimate
\begin{align*}
 \E_\alpha(\phi_n(t)|\hat\phi_m(t)) &+ \int_0^t\D_{\phi_n}(\mu_n(s)|\mu_m(s)) ds \\
 &\leq  C e^{ct}\E_\alpha(\phi_n(0)|\hat\phi_m(0)) +  Ce^{ct}\int_0^t \|\tilde r_{1,m}- r_{1,n}\|^2_{-1} + \|\tilde r_{2,m}- r_{2,n}\|^2_{1}  ds,
\end{align*}
with constants $c,C$ for all $n,m$, only depending on the uniform bounds
\begin{equation*}
\|\phi_n\|_{L^\infty(H^1_p)}, \|\hat\phi_m\|_{L^\infty(H^1_p)}, \|\dt \hat\phi_m\|_{L^{1}(L^2)}.    
\end{equation*}
Hence the constants $c,C$ in the above estimate can be chosen independent of $m,n$. 
Using the strong convergence of the residuals in the corresponding norms, we may pass to the limit in the integral on the right hand side. 
Furthermore application of Egorov's Theorem yields almost everywhere convergence of $b(\phi_n)$. With this and standard weak convergence results using Fatou's Lemma yields
\begin{equation*}
  \int_0^t\D_{\phi}(\mu(s)|\hat\mu(s)) ds \leq\liminf_{m\to \infty}\liminf_{n\to \infty}\int_0^t\D_{\phi_n}(\mu_n(s)|\hat\mu_m(s)) ds,
\end{equation*}
which allows us to pass to the limit in the relative dissipation term. A similar process is used to derive the energy inequality for the standard weak solution of \eqref{eq:ch1}-\eqref{eq:ch2}.
By the continuous embedding of $W(0,T)$ into $C([0,T];H^1_p(\Omega)$, 
we obtain convergence of the energy $\E_\alpha(\phi_n(t)|\hat \phi_m(t)) \to \E(\phi|\hat \phi)$ with $m,n \to \infty$, and in summary, we thus obtain \eqref{eq:stability}.

\end{document}